\theoremstyle{plain}
  \newtheorem{theorem}{Theorem}[section]
  \newtheorem{lemma}[theorem]{Lemma}
  \newtheorem{proposition}[theorem]{Proposition}
 \newtheorem{obs}[theorem]{Observation}
  \theoremstyle{definition}
  \newtheorem{definition}[theorem]{Definition}
  \newtheorem{example}[theorem]{Example}
\DeclareMathOperator{\dom}{dom}
\DeclareMathOperator{\img}{img}
\DeclareMathOperator{\mul}{mul}
\DeclareMathOperator{\Id}{Id}
\DeclareMathOperator{\coker}{coker}
\DeclareMathOperator{\supp}{supp}
\DeclareMathOperator{\Confg}{Confg}
\newcommand\reg{\mathrm{reg}}
\newcommand\lf{\mathrm{l.f.}}
\newcommand\Nov{\mathrm{Nov}}
\newcommand{\X}{\mathbb X}
\newcommand{\R}{\mathbb R}
\newcommand\itemref[1]{(\ref{#1})}
\newcommand{\BM}{{\rm BM}}
\begin{document}

\title{Topology of  
angle valued maps, bar codes and Jordan blocks.}

\author{Dan Burghelea}

\address{Dept. of Mathematics,
         The Ohio State University,
         231 West 18th Avenue,
         Columbus, OH 43210, USA.
        }

\email{burghele@mps.ohio-state.edu}

\author{Stefan Haller}

\address{Vienna, Austria.
        } 

\email{stefan.haller.42@gmail.com}

\thanks{
  Part of this work was done while the second author enjoyed the  hospitality of the Ohio State University. 
  The present version of this paper was finalized while  first author was visiting the Bernoulli Center at EPFL Lausanne and MPIM Bonn.  
  The first author acknowledges partial support from NSF grant MCS 0915996.
  The second author acknowledges the support of the Austrian Science Fund, grant P19392-N13.}

\keywords{}

\subjclass{}

\date{\today}

\begin{abstract}\

In this paper one presents a collection of results about the ``bar codes'' and ``Jordan blocks'' introduced in \cite {BD11} as
{\it computer friendly} invariants of a tame angle-valued map and one relates these invariants to the Betti numbers, Novikov Betti numbers and 
the monodromy of the underlying space and map.

Among others, one organizes the bar codes as two configurations of points in $\mathbb C\setminus 0$ and one establishes their main properties: stability property and when the  underlying space is a closed topological manifold, Poincar\'e duality property. One also provides an alternative {\it computer friendly} definition of the monodromy of an angle valued map based on the  algebra of {\it linear relations} as well as a refinement of 
Morse and Morse-Novikov inequalities.  

\end{abstract}

\maketitle 

\setcounter{tocdepth}{1}
\tableofcontents

\section{The results}\label{S1}

In this paper a \emph{nice space} is a friendlier name for a locally compact ANR (Absolute  Neighborhood Retract).\footnote{A metrizable, locally compact, finite dimensional locally contractible space is nice, see \cite{Hu}.}
Finite dimensional simplicial complexes and finite dimensional topological manifolds are nice spaces but the class is considerably larger. 
A \emph{tame} map is a proper continuous map $f\colon X\to \mathbb R$ or $f\colon X\to \mathbb S^1$, defined on a nice space $X$,
which satisfies: 
\begin{enumerate}[(i)]
\item 
each fiber of $f$ is a neighborhood deformation retract, and
\item
away from a discrete set $\Sigma\subset\mathbb R$ or $\Sigma\subset\mathbb S^1$ 
the restriction of $f$ to $X\setminus f^{-1}(\Sigma)$ is a fibration, cf.~\cite{BD11}.  In particular for $t\notin \Sigma(f)$ there exists a neighborhood $U\ni t$ such that for any $t'\in U,$ the inclusion $f^{-1}(t')\subset f^{-1}(U)$ is a homotopy equivalence.
\end{enumerate}

\noindent All proper 
simplicial maps and proper smooth generic maps defined on a smooth manifold,\footnote{Here ``generic'' means that for any $x\in M$ the quotient algebra of germs of smooth functions 
at $x$ by the ideal of partial derivatives is a finite dimensional vector space.} in particular proper real or angle valued Morse maps, are tame.   
At least for spaces homeomorphic to simplicial complexes the set of tame maps is residual in the space of all continuous maps 
and weakly homotopy equivalent to the space of all continuous maps (equipped with compact open topology).\footnote{
In case that the  space $X$ is homeomorphic to a finite dimensional simplicial complex, this is consequence of the approximability of continuous maps by pl-maps.} 

Most of the time we will have an a priory fixed field $\kappa$  and homology, Novikov homology, Betti numbers, etc. will be considered with respect to this field.  
For simplicity in writing,  the field  $\kappa$ will be omitted from the notations.  

In this paper we consider a tame map, $f\colon X\to \mathbb S^1$, and as in \cite{BD11}, one associates to the map $f$:
\begin{enumerate}[(i)]
\item\label{I:i}
the set of critical angles $0<\theta_1<\theta_2<\cdots<\theta_m\leq2\pi$,  
\item\label{I:ii}
for any $r=0,1,\dotsc,\dim X$, four types of intervals of real numbers, 
\begin{enumerate}[(1)]
\item
closed  ($[a,b]$), 
\item
open  ($(a,b)$), 
\item
closed-open ($[a,b)$),  
\item
open-closed ($(a,b]$),
\end{enumerate}
subsequently called \emph{$r$-bar codes}, whose ends mod $2\pi$ are critical angles,
with $0 < a \leq 2\pi,$ 
\item\label{I:iii}  
for any $r=0,1,\dotsc,\dim X$,
a collection of  isomorphism classes of indecomposable pairs $J=(V_J,T_J)$, where $T_J$ is a linear automorphism of a finite dimensional $\kappa$-vector space $V_J,$ subsequently called \emph{Jordan blocks}. 
\end{enumerate}
The bar codes can be also regarded as  equivalence classes of intervals  as above modulo \emph{translation by an integer multiple of $2\pi$,}
with ends mod $2\pi$  critical angles.

Recall that a pair $(V,T)$ 
is \emph{indecomposable} if not isomorphic to the sum of two nontrivial pairs. 
In this case  if $T$ has $\lambda\in\kappa$ as an eigenvalue all other eigenvalues are equal to $\lambda$, and $(V,T)$ is isomorphic to $(\kappa^k,T(\lambda,k))$ where $T(\lambda, k)$ is the $k\times k$ matrix 
\begin{equation}\label{E1}
T(\lambda,k)=
\begin{pmatrix}
\quad\lambda\quad & \quad1\quad       & \quad 0\quad      & \quad\cdots\quad  & \quad0\quad      \\
0       & \lambda & 1      & \ddots  & \vdots \\
0       & 0       & \ddots & \ddots  & 0      \\
\vdots  & \ddots  & \ddots & \lambda & 1      \medskip\\ 
0       & \cdots  & 0      & 0       & \lambda
\end{pmatrix}
\end{equation} if $k\geq 2,$  and  $T(\lambda,1)= (\lambda)$ if $k=1.$
In \cite {BD11} the indecomposable pairs $(\kappa^k,T(\lambda,k))$ were called \emph{Jordan cells}. 
When $\kappa$ is algebraically closed all Jordan blocks are Jordan cells.

We  denote by $\mathcal B_r^c(f)$, $\mathcal B_r^o(f)$, $\mathcal B_r^{co}(f)$, $\mathcal B_r^{oc}(f)$ 
the collections  of closed, open, closed-open and open-closed $r$-bar codes and by $\mathcal J_r(f)$ the collection of $r$-Jordan blocks. For brevity we also write $\mathcal B_r(f):= \mathcal B^c_r(f)\sqcup \mathcal B_r(f) \sqcup \mathcal B_r(f)\sqcup \mathcal B_r(f).$
Each bar code or Jordan block appears  with a multiplicity possibly larger than one. All these collections are multisets  which means each element appears with multiplicity. 
For $u\in \kappa\setminus 0$ we denote by $\mathcal J_{r,u}(f)$ the sub-collection of $r$-Jordan blocks with eigenvalue $u.$
In view of the definitions in Section~\ref{SS3}, cf.\ also \cite{BD11}, each tame map has finitely many bar codes and Jordan blocks. 

It was shown in \cite{BD11} that for simplicial maps these invariants are effectively computable and an algorithm for their calculation was proposed.
Existence of such algorithms is what we  mean by {\it computer friendly invariants}. All these invariants are described in Section \ref {SS3}.

In order to formulate the results, we recall that any continuous map $f\colon X\to \mathbb S^1$ determines an integral cohomology class $\xi_f\in H^1(X;\mathbb Z)$ via pull back of a fixed generator in $H^1(\mathbb S^1;\mathbb Z)\cong\mathbb Z$.
By homotopy invariance, homotopic maps $f_1,f_2\colon X\to \mathbb S^1$ determine the same class, $\xi_{f_1}=\xi_{f_2}$.
If $X$ an ANR, then this assignment induces a bijection between the set of homotopy classes of maps $X\to\mathbb S^1$ and $H^1(X;\mathbb Z)$.
In other words, any class $\xi\in H^1(X;\mathbb Z)$ is of the form $\xi= \xi_f$ for some continuous angle-valued map $f$ which is unique up to homotopy.
This follows from the fact that the circle $\mathbb S^1$ is an Eilenberg--MacLane space $K(\mathbb Z,1)$, see~\cite[Section~4.3]{H02}.

We say that two pairs $(X_1,\xi_1)$ and $(X_2,\xi_2)$, $\xi_1,\xi_2 \in H^1(X;\mathbb Z)$, are homotopy equivalent if there exists a homotopy equivalence $\omega\colon X_1\to X_2$ s.t.\ $\omega^\ast(\xi_2)= \xi_1$.  

The basic algebraic topology  invariants  associated with a pair $(X,\xi)$, $\xi\in H^1(X;\mathbb Z)$, a field $\kappa$, and a positive integer $r\in\mathbb N_0$ we consider in this paper are:
\begin{enumerate}[(1)]
\item
the singular homology $H_r(X)$, a $\kappa$-vector space whose dimension, when finite, is called the Betti number $\beta_r(X)$;
\item
the Novikov homology $H^N_r(X;\xi)$, a vector space over the field of Laurant power series $\kappa[t^{-1},t]]$ with coefficients in $\kappa$, whose dimension, when finite, is called the Novikov--Betti number $\beta^N_r(X;\xi)$; and
\item 
the $r$-monodromy, an isomorphism class of pairs $(V_r,T_r)$ where $V_r$ is a $\kappa$-vector space and $T_r\colon V_r\to V_r$ is a linear isomorphism.
\end{enumerate}
If $X$ is a compact ANR then $\beta_r(X)$, $\beta^N_r(X;\xi)$, and $\dim(V_r)$ are finite.   

The first result we prove in this paper is Theorem~\ref{T1} below.

\begin{theorem}[Homotopy invariants]\label{T1}
If $f\colon X\to \mathbb S^1$ is a tame map and $\xi_f\in H^1(X;\mathbb Z)$ is the integral cohomology class represented by $f$ then: 
\begin{enumerate}[(a)]
\item\label{T1:a}
$\sharp\mathcal B^c_r(f)+\sharp\mathcal B^o_{r-1}(f)$ is a homotopy invariant of the pair $(X,\xi_f)$, 
more precisely is equal to the Novikov--Betti number $\beta^N_r(X;\xi_f)$. 
\item\label{T1:b}
The collection $\mathcal J_r(f)$ is a homotopy invariant of the pair $(X,\xi_f)$. 
More precisely, $\bigoplus_{J\in \mathcal J_r(f)}J := \bigoplus_{J\in \mathcal J_r(f)}(V_J,T_J) $ 
is the $r$-monodromy $(V_r, T_r)$ of $(X;\xi_f)$. 
\item\label{T1:c}
$\sharp \mathcal B^c_r(f)+\sharp \mathcal B^o_{r-1}(f)+\sharp \mathcal J_{r,1}(f)+\sharp \mathcal J_{r-1,1}(f)$ is equal to the Betti number $\beta_r(X)$.
\end{enumerate}
\end{theorem}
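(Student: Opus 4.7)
The plan is to identify the bar codes and Jordan blocks with the indecomposable summands of $H_r(\tilde X)$ as a $\kappa[t,t^{-1}]$-module, where $p\colon\tilde X\to X$ is the infinite cyclic cover associated to $\xi_f$ and $t$ generates the deck group; all three statements then follow by standard homological algebra.

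\emph{Step 1 (Structural identification).} Tameness of $f$ allows us to cut $X$, up to homotopy equivalence, along the $m$ critical fibers $f^{-1}(\theta_i)$ into finitely many mapping-cylinder pieces. Taking $r$-th homology of each piece together with the inclusion maps produces a finite-dimensional representation of a cyclic quiver of length $2m$, whose indecomposable classification is recalled in \cite{BD11} and consists of exactly two families: bar-code summands (four subfamilies indexed by endpoint type) and Jordan blocks. Lifting to $\tilde X$, I expect this to promote to a direct-sum decomposition of the finitely generated $\kappa[t,t^{-1}]$-module $H_r(\tilde X)$ in which the Jordan blocks constitute the torsion part and the bar codes constitute the free part, each bar code contributing a rank-one free summand whose homological degree is shifted by $0$ or $1$ according to its endpoint type.

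\emph{Step 2 (Parts \itemref{T1:a} and \itemref{T1:b}).} By definition the monodromy of $(X,\xi_f)$ in degree $r$ is the action of $t$ on the torsion part of $H_r(\tilde X)$, which by Step~1 equals $\bigoplus_{J\in\mathcal J_r(f)}(V(J),T(J))$, proving \itemref{T1:b}. For \itemref{T1:a}, the Novikov homology $H_r(X;\Nov_{\xi_f})$ is computed by tensoring the $\kappa[t,t^{-1}]$-chain complex of $\tilde X$ with the flat Novikov field $\Nov_{\xi_f}$; torsion summands die and each free summand contributes a one-dimensional piece. Counting the summands landing in degree $r$ under the degree-shift convention of Step~1 yields $\beta^N_r(X,\xi_f)=\sharp\mathcal B^c_r(f)+\sharp\mathcal B^o_{r-1}(f)$.

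\emph{Step 3 (Part \itemref{T1:c}).} The Wang long exact sequence
\begin{equation*}
\cdots\to H_r(\tilde X)\xrightarrow{t-1} H_r(\tilde X)\to H_r(X)\to H_{r-1}(\tilde X)\xrightarrow{t-1}\cdots
\end{equation*}
gives $\beta_r(X)=\dim_\kappa\coker\bigl(t-1\mid H_r(\tilde X)\bigr)+\dim_\kappa\ker\bigl(t-1\mid H_{r-1}(\tilde X)\bigr)$. A free rank-one summand $\kappa[t,t^{-1}]$ contributes $1$ to the cokernel and $0$ to the kernel of $t-1$; a Jordan block of size $k$ with eigenvalue $\lambda$ contributes $1$ to both kernel and cokernel if $\lambda=1$ and $0$ to both otherwise. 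Summing via Step~1 yields $\beta_r(X)=\beta^N_r(X,\xi_f)+\sharp\mathcal J_{r,1}(f)+\sharp\mathcal J_{r-1,1}(f)$, which after substituting \itemref{T1:a} is exactly \itemref{T1:c}.

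The main obstacle is Step~1: one must verify carefully that the indecomposable decomposition of the cyclic-quiver representation lifts to a decomposition of $H_r(\tilde X)$ as a $\kappa[t,t^{-1}]$-module in which the four kinds of bar codes become free rank-one summands with the correct degree assignment, and the Jordan blocks become the full torsion subgroup. Once this structural identification is pinned down, the remainder is standard localization and Wang-sequence bookkeeping.
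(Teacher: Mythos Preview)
Your outline is sound and, once Step~1 is established, Steps~2 and~3 go through as you describe. The paper's proof is organized around the same structural identification, but it is obtained differently: rather than asserting a module decomposition of $H_r(\tilde X)$, the paper builds a Mayer--Vietoris short exact sequence
\[
0\to\coker M(\tilde\rho_r)\to H_r(\tilde X)\to\ker M(\tilde\rho_{r-1})\to 0
\]
(Proposition~\ref{P31}) and then computes $\ker$ and $\coker$ summand-by-summand on indecomposables (Proposition~\ref{P23}, Proposition~\ref{O45}, Observation~\ref{O46}). This is precisely the content of your Step~1, and it is where all the work lies; your acknowledgment of this is accurate. One imprecision to flag: not every bar code contributes a free summand to $H_r(\tilde X)$. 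The mixed bar codes (closed--open and open--closed) have $d\ker=d\coker=0$ and contribute nothing; only closed $r$-bar codes and open $(r-1)$-bar codes survive, which is why the formula in \itemref{T1:a} involves only those two collections.

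For \itemref{T1:c} your Wang/Milnor-sequence argument is a genuinely different route from the paper's. The paper instead applies its Mayer--Vietoris sequence directly to $X$ (the $u=1$ case of \eqref{EE16}) and reads off $\beta_r(X)=\sharp S_{r,1}(f)$ from Proposition~\ref{P35}(1). Your approach is arguably cleaner once the $\kappa[t,t^{-1}]$-module structure of $H_r(\tilde X)$ is in hand, since it reduces \itemref{T1:c} to the elementary computation of $\ker(t-1)$ and $\coker(t-1)$ on each PID summand; the paper's approach has the advantage that it treats $H_r(X;(\xi_f,u))$ for all $u$ uniformly and never needs the module splitting to be explicit.
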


The definition of Novikov--Betti numbers  and of the monodromy are given in  Section~\ref{S4} and  ``$\sharp$'' denotes the cardinality of a multiset.

Item \itemref{T1:c} has been already established in \cite[Theorem~3.2]{BD11} and is included in Theorem~\ref{T1} only for the completeness of the topological information derived from bar codes and Jordan blocks.
 
 \vskip .1in

In view of Theorem~\ref{T1} it is natural to put together $\mathcal B_r^c(f)$ and $\mathcal B^o_{r-1}(f)$.
For this purpose consider $\mathbb T=\mathbb R^2/\mathbb Z$ and $\Delta_{\mathbb T}=\Delta/\mathbb Z$ where the $\mathbb Z$-action on $\mathbb R^2$ is given by $(n, (a, b))\mapsto(a + 2\pi n, b + 2\pi n)$ and $\Delta:=\{(a, b)\in \mathbb R^2 \mid a=b\}$.
One denotes the $\mathbb Z$-orbit of $(a,b)\in \mathbb R^2$ by $\langle a,b\rangle\in\mathbb T$. 
Note that $\mathbb T$ can be identified to $\mathbb C\setminus 0$ via the map $\langle a,b\rangle\mapsto z:=e^{(a-b)/2+i(a+b)/2}$.
Via this identification, $\Delta_\mathbb T$ corresponds to the unit circle $\mathbb S^1=\{z\in\mathbb C:|z|=1\}$.

We will record the collections $\mathcal B_r^c(f)\sqcup \mathcal B_{r-1}^o(f)$ as a finite configuration of points in $\mathbb T= \mathbb C\setminus 0$, denoted by $C_r(f)$,
and the collection $\mathcal B_r^{co}(f)\sqcup \mathcal B_{r}^{oc}(f)$ as a finite configuration of points in $\mathbb T\setminus \Delta_{\mathbb T}= \mathbb C\setminus (0 \sqcup \mathbb S^1)$, denoted  by $C_r^m(f)$.

More precisely, in the first case a closed $r$-bar code $[a,b]$ will be written as $\langle a, b\rangle \in \mathbb T$ or the complex number $z=e^{(a-b)/2+i(a+b)/2}\in\mathbb C\setminus0$ and an open
$(r-1)$-bar code $(\alpha,\beta)$ as $\langle\beta,\alpha\rangle\in\mathbb T$ or the complex number $z=e^{(\beta-\alpha)/2+i(\beta+\alpha)/2}\in\mathbb C\setminus 0$.
Similarly, in the second case, a closed-open $r$-bar code $[a,b)$ will be written as $\langle a,b\rangle\in\mathbb T\setminus\Delta_{\mathbb T}$ or the complex number $e^{(a-b)/2+i(a+b)/2}\in\mathbb C\setminus(0\sqcup\mathbb S^1)$ and an open-closed $r$-bar code $(\alpha,\beta]$ as $\langle\beta,\alpha\rangle\in\mathbb T\setminus\Delta_{\mathbb T}$ or the complex number $e^{(\beta-\alpha)/2+i(\beta+\alpha)/2}\in\mathbb C\setminus(0\sqcup \mathbb S^1)$.

In view of Theorem~\ref{T1}\itemref{T1:a}, if $f$ is in the homotopy class defined by $\xi\in H^1(X;\mathbb Z)$, then the configuration
$C_r(f)$ has the total cardinality of the support
\footnote{The total cardinality of the support of a configuration is the sum of the multiplicities of its points.}
exactly $\beta^N_r(X;\xi)$ and can be regarded as a point in the $n$-fold symmetric product $S^n(\mathbb T)$ of $\mathbb T$ where $n=\beta^N_r(X;\xi)$.
Identifying $\mathbb T$ with $\mathbb C\setminus0$ as above, the space $S^n(\mathbb T)$ identifies to the space of monic polynomials of degree $n$ with non-vanishing free coefficient, that is, $\mathbb C^{n-1}\times(\mathbb C\setminus 0)$, by assigning to a complex polynomial its configuration of zeros with multiplicities.
Hence, each $C_r(f)$ can be regarded as a monic polynomial $P_r^f(z)$ of degree $n$ with non-vanishing free coefficient.
We equip $S^n(\mathbb T)$ with the topology of the symmetric product or equivalently with the topology of $\mathbb C^{n-1}\times (\mathbb C\setminus 0)$.

Let $C(X,\mathbb S^1)$ denote the space of all continuous maps equipped with the compact open topology and let $C_{\xi}(X,\mathbb S^1)$ be the connected component corresponding to $\xi$. 
Let $C_{\xi, t}(X,\mathbb S^1)$ be the subspace of tame maps in $C_{\xi}(X,\mathbb S^1)$.  
Our next result 
is the following theorem which will be referred to below as Strong Stability Theorem.

\begin{theorem}[Stability]\label{T2} 
Suppose $X$ is a compact ANR. 
Then the assignment 
$$
C_{\xi, t}(X,\mathbb S^1)\ni f \mapsto C_r(f)\in S^{n}(\mathbb T),
$$
equivalently, 
$$
C_{\xi, t}(X,\mathbb S^1)\ni f \mapsto P^f_r(z)\in \mathbb C^{n-1}\times (\mathbb C\setminus 0),
$$ 
is continuous, where $n=\beta^N_r(X,\xi)$.

Moreover, if  $X$ is homeomorphic to a simplicial complex, this extends to a continuous map, $C_{\xi}(X,\mathbb S^1)\to S^{n}(\mathbb T)$,
equivalently, $C_{\xi}(X,\mathbb S^1)\to \mathbb C^{n-1}\times (\mathbb C\setminus 0)$.
\end{theorem}

In particular, if $X$ is triangulable, then the configuration $C_r(f)$ and therefore the closed and open bar codes, can be defined for any continuous map. 
It is expected that the triangulability hypothesis can be removed.
\footnote{Results on Hilbert cube manifolds permit to remove the triangulability hypothesis, cf.~\cite {B14}.}

The configuration $C_r(f)$, equivalently the polynomial $P_r^f(z)$, can be viewed as a refinement of the Novikov--Betti number in dimension $r$. 
The Poincar\'e duality for closed manifolds extends from Novikov--Betti numbers to these refinements and we have the following theorem. 

\begin{theorem}[Poincar\'e duality]\label{T3}
If $M^n$ is a closed $\kappa$-orientable
\footnote{If $\kappa$ has characteristic $2$ any manifold is $\kappa$-orientable if not the manifold should be orientable.} 
topological manifold and $f\colon M\to \mathbb S^1$ a tame map, then 
$$
C_r(f)(\langle a,b\rangle)=C_{n-r}(f)(\langle b, a\rangle).
$$
Equivalently, $C_r(f)(z)=C_{n-r}(f)(\tau(z))$ where $\tau(z):=1/\bar z=z/|z|^2$ denotes the inversion across the unit circle, $z\in\mathbb C\setminus0$.
\end{theorem}

The proofs of Theorems~\ref{T2} and \ref{T3} use an alternative definition of the configuration $C_r(f)$.
One defines the function $\delta^f_r$ on $\mathbb T$ with values in $\mathbb N_0$,  with no reference to ``bar codes'' or to graph representations, and one verifies that it is equal to the configuration $C_r(f)$.
One verifies Theorems~\ref{T2} and \ref{T3} for $\delta^f_r$ instead of $C_r(f)$.

\vskip .1in

The Jordan blocks introduced in \cite{BD11} via graph representations, can be also recovered in a different manner, 
more precisely, as the \emph{regular part} of a linear relation. This makes their computations achievable by an algorithm less expensive 
than the one presented in \cite {BD11}, cf.~\cite {B16}.

A linear relation, $R\colon V\leadsto V$, is a concept generalizing a linear map, $V\to V$. 
To every linear relation $R$ on a $\kappa$-vector space $V$ one can associated canonically a pair, $R_\reg= (V_\reg, T_\reg)$, where $V_\reg$ is a $\kappa$-vector space and $T_\reg\colon V_\reg\to V_\reg$ is a linear isomorphism.
This construction will be discussed in Section~\ref{SS:linrel}.

To a tame map $f\colon X\to \mathbb S^1$ one associates linear relations $R^\theta_r\colon H_r(f^{-1}(\theta))\leadsto H_r(f^{-1}(\theta))$ described as follows.
Let $\tilde f\colon\tilde X\to \mathbb R$ be the infinite cyclic covering defined by the pullback diagram 
$$
\xymatrix{
\tilde X\ar[d]\ar[r]^-{\tilde f}&\R\ar[d]^p\\X\ar[r]^-f&S^1.
}
$$
For $t$ with $p(t)= \theta$  the linear relation $R^{\theta}_r$ is obtained by passing to homology in the sequence, see Section~\ref{S8} for more details,
$$
f^{-1}(\theta)=\tilde f^{-1}(t)\hookrightarrow\tilde f^{-1}\bigl([t,t+2\pi]\bigr)
\hookleftarrow\tilde f^{-1}(t+2\pi)=f^{-1}(\theta).
$$

We have the following result.

\begin{theorem}[Monodromy theorem]\label{T4} 
If $f$ is a tame map and $r$ a  non-negative integer, then for any angle $\theta$ 
the pair $(R^\theta_r)_\reg$ is isomorphic to $$
(\bigoplus_{J\in\mathcal J_r(f)} V_J, \bigoplus_{J\in\mathcal J_r(f)} T_J),$$ 
with $J= (V_J, T_J)$ and is a homotopy invariant of the pair $(X,\xi_f).$
\end{theorem}  
\vskip .1in 

The next results refers to the collections $\mathcal B^{co}_r(f)$ and $\mathcal B^{oc}_r(f)$ of mixed bar codes.
First we note that the collection $\mathcal B_r^{co}(f)$ can be identified to the collection of finite \emph{persistence intervals} considered in \cite{ELZ02} or \cite{CEH07} for the map $\tilde f\colon\tilde X\to \mathbb R$ made equivalent modulo $2\pi$-translation.
Similarly, the collection $\mathcal B_r^{oc}(f)$, after changing $(a,b]$ into $[-b,-a)$, can be identified to the collection of finite \emph{persistence intervals} of the map $-\tilde f$ modulo $2\pi$-translation.

The configurations $C^m_r(f)$ obtained by putting together $\mathcal B_r^{co}(f)$ and $\mathcal B_r^{oc}(f)$ also enjoy a  stability property and  
Poincar\'e duality, cf. Theorem \ref{T5} and Theorem \ref{T6} below, however with different quantitative and qualitative properties. 
Theorem \ref {T5}  is a reformulation of the famous stability result   
of \cite{CEH07} and is stated here only for comparison with Theorem \ref{T2}. 

Note that for tame angle valued maps in the same homotopy class the configurations $C^m_r(f)$ do not have the support of the same cardinality therefore  
a stability property 
will require  a new topology on the set of configuration; in such topology the definition of ``proximity''  ignores the points near the diagonal $\Delta_{\mathbb T}.$ 
This topology on the space of configurations of points in $\mathbb T\setminus \Delta_{\mathbb T},$ called the {\it bottleneck topology}, can be   
derived from a metric proposed in \cite {CEH07}, the {\it bottleneck metric}. 

Here is an alternative definition of the ``bottleneck topology'' on the set $\Confg(X\setminus K)$ of configurations of points in 
$X\setminus K$, $X$ locally compact space and $K$ a closed subset of $X$ without involving metric.
Recall that a configuration is a map with finite support, $\delta\colon X\setminus K\to \mathbb N_0$. 
A base for the bottleneck topology  is given by the collection of sets $\mathcal U(S)$ indexed by systems 
$S=\{(U_1, k_1),\dotsc,(U_r,k_r),V\}$ satisfying 
\begin{enumerate}[(i)]
\item
$U_i$, $i=1,\dotsc,r$ open subsets of $X\setminus K$, $V$ open neighborhood of $K$, 
\item
$k_1,k_2,\dotsc,k_r$ positive integers.
\end{enumerate}
and defined by 
$$
\mathcal U(S):=\bigl\{\delta\in\mathcal\Confg(X\setminus K) \bigm| \supp(\delta)\subset U_1\cup\cdots\cup U_r\cup V,\ \textstyle\sum_{x\in U_i}\delta(x)=k_i\bigr\}.
$$
A set $\mathcal U\subset \Confg(X\setminus K)$ is open in the bottleneck topology if for any $\delta\in \mathcal U$ there exists $S$ such that
$$
\delta\in \mathcal U(S)\subset \mathcal U.
$$ 

When $X$ is a complete locally compact metric space and $K$ is a closed subspace the bottleneck metric of $\Confg(X\setminus K)$ given by the formulae 
 in \cite {CEH07} induces the bottleneck topology described above.

The ``main theorem''  in \cite{CEH07} implies:

\begin{theorem}[CEH stability] \label {T5}
The assignment $f\mapsto C^m_r(f)$ is a continuous map from the space $C_t(X,\mathbb S^1)$ of tame maps to $\mathcal\Confg(\mathbb T\setminus \Delta_{\mathbb T})$ 
when the first space is equipped with the compact open topology and the second with the topology  described above in case $(X,K)= (\mathbb T, \Delta_{\mathbb T})$. 
\end{theorem}

To better realize the differences between Theorem \ref {T2} and \ref {T5} we point out that: 
\begin{enumerate}[(a)]
\item\label{C:a} Arbitrary small perturbations of a tame map can introduce arbitrary many mixed bar codes, see Example~1 in Appendix~\ref{App2}.
\item\label{C:b} Arbitrary small perturbations of some tame maps (which have closed $r$-bar codes of the form $[a,a]$) can decrease the number of closed $r$-barcodes and increase the number of open $(r-1)$-bar codes, see Example~2 in Appendix~\ref{App2}.
\item\label{C:c} Continuous deformations of tame maps can make the mixed $r$-barcodes, i.e., closed $r$-bar codes and open $r$-bar codes, appear or disappear. The decrease or the increase  in the number of closed $r$-bar codes is at the expense of the increase or the decrease in the number of open $(r-1)$-barcodes, see Examples~2 and 3 in Appendix~\ref{App2}.
\item\label{C:d} The assignment $f\mapsto C^m_r(f)$,  as opposed to the assignment $f\mapsto C_r(f)$, can not be extended continuously to the entire space $C_\xi(X,\mathbb S^1)$ and this because of the lack of completeness of the bottleneck metric.
\end{enumerate}

For the reader familiar with Morse theory we point out that the disappearance of a closed-open $r$-bar code by a continuous deformation of tame map is similar to  the cancellation of a pair of two critical points one of index $r$ one of index $(r-1)$ in Morse theory as described in  \cite {Mi4}. 

\vskip .1in

For a closed topological manifold $M^n$  the configurations $C^m_r(f)$ satisfy Poincar\'e duality but in analogy to the 
the Poincar\'e duality  for the torsion subgroups of the integral homology groups for closed orientable manifolds. 
Precisely, we have the following result.

\begin{theorem}[Poincar\'e duality]\label{T6} 
If $M^n$ is a closed $\kappa$-orientable topological manifold, $f\colon M\to \mathbb S^1$ a tame map, then
$$
C^m_r(f)(\langle a,b\rangle)=C^m_{n-1-r}(f)(\langle b, a\rangle).
$$
Equivalently, $C^m_r(f)(z)=C^m_{n-1-r}(f)(\tau(z))$, where $\tau(z):=1/\bar z=z/|z|^2$ denotes the inversion across the unit circle, $z\in\mathbb C\setminus0$.
\end{theorem}

\vskip .1in

It is interesting to regard the elements \itemref{I:i}, \itemref{I:ii}, \itemref{I:iii}, that is, the critical values, bar codes and Jordan blocks associated to a tame angle valued map $f\colon X\to \mathbb S^1$,
as parallels  to the rest points, the isolated trajectories between rest points and the closed trajectories 
(actually Poincar\'e return maps for closed trajectories) of a vector field which has a Morse angle-valued map $f\colon M\to \mathbb S^1$ as Lyapunov map. 
These last ones are the concepts which enter the Morse--Novikov theory, cf.~\cite{Novikov,P}, and are related to the topology of $(X,\xi_f)$, where $\xi_f$ denotes the integral cohomology class defined by $f$, 
in a  similar way as the elements described in  \itemref{I:i}, \itemref{I:ii} and \itemref{I:iii} are. 

The last result, Theorem~\ref{T7} below, improves on Morse inequalities for real-valued maps resp.\ Morse--Novikov inequalities for angle-valued maps, the simplest and most familiar applications of Morse resp.\ Morse--Novikov theory, cf.\ \cite {Mi3,P}. 

Recall that for a smooth closed manifold $M^n$ a smooth real or angle valued map is Morse if all critical points $x$ are non degenerate, hence have a Morse index, $\operatorname{ind}_f(x)\in\{0,1,\dotsc,n\}$.  
Recall that a  point $x\in M$ is critical if, with respect to any local coordinates $(t_1,t_2,\dotsc,t_n)$ with $x$ given by $t_1=t_2=\cdots=t_n=0$, all partial derivatives $\partial f/\partial t_i(0)$ vanish. 
A critical point is non-degenerate if in addition the Hessian, i.e.\ the symmetric matrix $\frac{\partial^2f}{\partial t_i\partial t_j}(0)$, has all eigenvalues non-zero.
These eigenvalues are all real, and the Morse index of $f$ at $x$ coincides with the number of negative eigenvalues.
The concepts critical points, non-degenerate critical points, and index of a non-degenerate critical points are independent of the local coordinates $(t_1, t_2, \dotsc, t_n)$.

Let $c_i(f)$ be the number of critical points of index $i.$ 
The Morse inequalites claim that for any $r\in\mathbb N_0$, and with respect to any field $\kappa$, one has 
$$
\sum_{i=0}^r(-1)^{r-i}c_i(f) \geq \sum_{i=0}^r(-1)^{r-i}b_i
$$
where $b_i=\beta_i(M)$ if $f$ is real-valued Morse map, and $b_i=\beta^N_i(M;\xi_f)$ if $f$ is an angle-valued Morse map, see \cite[Equation~($4_\lambda$)]{Mi3}.

The following result refines the  Morse inequalities:

\begin{theorem}\label{T7}
Let $M^n$ be closed smooth manifold,
\footnote{The result remains true for compact manifolds with boundary satisfying an appropriate hypothesis on the behavior of $f$ along the boundary.} 
consider a field $\kappa$, and suppose $r$ is a non-negative integer.
If $f\colon M\to\mathbb R$ is a real-valued Morse map, then
$$
c_r(f)=\beta_r(M)+\sharp\mathcal B^{co}_r(f)+\sharp\mathcal B^{co}_{r-1}(f).
$$
Moreover, if $f\colon M\to\mathbb S^1$ is an angle-valued Morse map, then
$$
c_r(f)=\beta^N_r(M;\xi_f)+\sharp\mathcal B^{co}_r(f)+\sharp\mathcal B^{co}_{r-1}(f).
$$
\end{theorem}

Note that the right side of the above equalities make sense for an arbitrary compact ANR equipped with a tame map rather than compact manifolds equipped with a Morse function, an attractive feature in comparison with the classical Morse--Novikov theory.

\vskip .1in

The paper contains, in addition to the present section which summarizes the results, eight more sections which describe the concepts involved in and provide the proofs of the results and three appendices. 
In Section~\ref{S2} we review  simple results about graph representations of the two graphs relevant for this paper, $G_{2m}$ and $\mathcal Z.$  
In Section~\ref{SS3} we define the sets $\mathcal B^{\dots}_r(f)$ and $\mathcal J_r(f)$ and  provide the preliminaries for the proof of Theorem~\ref{T1}.
In Section~\ref{S4} we prove Theorem~\ref{T1}. 
In Section~\ref{SS5} we define the function $\delta^f_r$ and prove Theorem~\ref{T2}. 
In Sections~\ref{SS6} and \ref{SS7} we discuss the Poincar\'e duality for the configurations $C_r(f)$ and $C^m_r(f)$ and prove Theorems~\ref{T3} and \ref{T6}. 
In Section~\ref{SS2} we discuss some linear algebra of linear relations and prove Theorem~\ref{T4}. 
In Section~\ref{S9} we verify Theorem~\ref{T7}.
Appendix~\ref{App1} provides an example of tame map and describes its bar codes and Jordan cells.
Appendix~\ref{App2} illustrates the behavior of bar codes with respect to a continuous deformation of the map.
Appendix~\ref{SS10} provides a few observations about $\kappa[t^{-1},t]$-modules.


Note that if $f\colon X\to \mathbb S^1$ is not surjective, then the set $\mathcal J_r(f)$ vanishes for all $r$.
Note also that a real-valued $f$ can be viewed as a non-surjective angle-valued map, and the bar codes are essentially the same as the zigzag persistence barcodes cf.\ \cite{CSD09}. 
In this case there is no need to consider $\mathbb T$ and $\mathbb T\setminus \Delta_\mathbb T$; the natural place for the support of the configuration $C_r(f)$, consisting of closed $r$-barcodes and open $(r-1)$-barcodes, is $\mathbb C$; and the natural place for the support of $C^m_f(f)$, consisting of the closed-open $r$-barcodes and open-closed $r$-barcodes, is $\mathbb C\setminus \Delta$. 

\subsubsection*{Prior work}

Relating the topology of a space to the homological behavior of the sublevel sets of a real or angle-valued map represents what ``persistence theory'' introduced in \cite{ELZ02} intends to do. 
Prior efforts to extend Morse theory to all continuous real-valued functions (fonctionelles) can be found in the papers of M.~Morse \cite{MO} and R.~Deheuvels \cite{RD55}, which preceded persistence theory.
The work of R.~Deheuvels \cite{RD55}, permits to derive the barcodes (the support of persistence diagrams as considered in \cite{ELZ02} or \cite{EH}) from the differentials of Leray spectral sequence of a real valued tame map. 
The same observation holds about the barcodes in the zigzag persistence and persistence for circle valued maps but we can not find this in this existing in literature. 

A stability phenomena for the persistence diagrams associated to a real valued map in classical persistence theory was first established in \cite{CEH07}. 

The first use of graph representations in connection with persistence appears first in \cite{CSD09} under the name of zigzag persistence. 
The graphs considered are all linear finite graphs whose collection of indecomposable representations is finite and not hard to describe and interpret as bar codes (of four types).

The definition of bar codes and of Jordan cells for $\mathbb S^1$-valued tame maps was first provided in \cite{BD11} based on graph representations of the cyclic graph $G_{2m}$ 
whose indecomposable representations are more complex and led in addition to bar codes to Jordan cells. 

The referee points out a that a number of the results in this paper are reminiscent of  behavior  of the bar codes in zigzag persistence cf.\ \cite{CSD09} and this deserves to be mentioned. 
We are happy to do so. 
Reminiscences of the work of \cite{CEH09} in the Poincar\'e  duality Theorems~\ref{T6} should be also acknowledged.

\subsubsection*{Some more recent work}

Using results from topology of Hilbert cube manifolds, it was  recently observed that the hypothesis ``$X$ homeomorphic to a simplicial complex'' in Theorem~\ref{T2} can be weaken to 
``$X$ compact ANR'', and the hypothesis ``tame map'' in Theorems~\ref{T1}, \ref{T3}, and \ref{T4} can be weaken to ``continuous map'' cf \cite {B14} and \cite {B15}. 

In case of a real valued map and in the presence of a scalar product on $H_r(X)$ (the field $\kappa$ being $\mathbb R$ or $\mathbb C$) the configuration $C_r(f)$ can be implemented as a configuration
$\hat \delta^f_r$ of subspaces 
$\hat \delta^f_r(z)\subseteq H_r(X)$, $z$ in the support of $C_r(f)$, 
which are mutually orthogonal and have $\dim \hat \delta^f_r(z)$ equal to the multiplicity of $z$.
The assignment $f\rightsquigarrow \hat \delta^f_r$ remains continuous w.r.\ to the obvious topologies and in case of closed manifolds Poincar\'e duality between configurations $C_r(f)$  extends to the configurations $\hat \delta^f_r$ of vector spaces.
This is the case when $X$ is the underlying space of a closed Riemannian manifold $M^n$ and $\kappa=\mathbb R$ with the scalar product on $H_r(M)$ 
provided by the identification with the space of harmonic forms in complementary dimension $(n-r)$. 
This will be discussed in details in \cite{B14}.

A similar fact remains true for angle valued maps. If $\kappa=\mathbb C$ the Novikov homology $H^N_r(X;\xi_f)$ can be replaced by the $L_2$-homology $H^{L_2}_r(\tilde M)$ of the infinite cyclic cover $\tilde X$ defined by the map $f$. 
When regarded as a Hilbert module over the von Neumann algebra $L^{\infty}(\mathbb S^1)$  this Hilbert module has the von~Neumann dimension equal to the Novikov--Betti number $\beta^N_r(X;\xi_f)$. 
The mutually orthogonal subspaces are in this case mutually orthogonal Hilbert submodules. This will be discussed in \cite {B15}. 

If $f\colon M^n\to\mathbb R$ or $f\colon M^n\to \mathbb S^1$ is a Morse function, Lyapunov for a smooth vector field $X$ on a closed manifold $M,$ 
the  Morse complex resp.\ the Novikov complex tensored by a field $\kappa$ derived geometrically from the critical points of $f$ and the isolated trajectories  of $X$ between critical points, can be recovered up to isomorphism from the closed, open and closed-open bar codes of $f$ via the results discussed in this paper.  
Actually the closed-open barcodes  determine the rank of the boundary maps  in these complexes.
More about can be found in the forthcoming book \cite {BUR}.
The precise relation between Reidemeister torsion, closed trajectories for a vector field with an angle valued map $f$ as Lyapunov and the Jordan cells and barcodes is the topic of \emph{work in preparation} under the name \emph{Alternative to Morse--Novikov theory}.

\subsection*{Applications}


The angle valued maps are as interesting and frequent as the real valued maps.
Observing/sampling an environment/shape from a central point in each direction should be as interesting and natural as observing the sublevel sets with respect to a real valued function of a shape.  

So far there are pleasant mathematical applications of the results in this paper and of the subsequent work, cf.\ \cite{B14,B15,B16,BUR}, in Computational Topology, Geometric Analysis and Dynamics.
\begin{itemize}
\item\emph{Computational Topology.}
Theorems~\ref{T1} and \ref{T4} imply precise relations between Betti numbers of $X$, Novikov Betti numbers and the monodromy, i.e., Jordan cells of a pair $(X;\xi_f)$.
They lead to computer implementable algorithms for the calculation of the last two without involving the infinite cyclic cover of $\xi_f$, a computer unfriendly object (being infinite even when $X$ is a finite simplicial complex), cf.~\cite{BD11} and \cite{B16}.
In particular, they lead to alternative methods to calculate the Alexander polynomial of knots and some Reidemeister torsions, to recognition of when $f\colon X\to\mathbb S^1$ is homotopic to a fibration with compact fiber, and in this case to the calculation of the Betti numbers of the fiber.
A paper on these type of results is in preparation.  
\item{\it Algebraic Topology of complements of complex hyper surfaces.}
The complement of a complex hyper surface in $\mathbb C^n$ comes equipped with a natural angle valued map.
The relevant algebraic topology invariants of this space are quite important in algebraic geometry. 
They can be express in therms of bar codes and Jordan cells and then are in principle computable.
Results in this direction are available in \cite{B15,B16,BUR}.
\item\emph{Geometric Analysis.}
The implementation of $C_r(f)$ to a configuration of mutually orthogonal spaces provides an orthogonal decomposition of the space of complex harmonic $(n-r)$-differential forms in subspaces, i.e., components, each subspace corresponding to the complex number represented by a closed $r$-barcode or an open $(r-1)$-barcode. 
For generic $f$ each component has dimension one. 
A pleasant consequences of this additional structure is the existence for a generic pair $(g,f)$ $g$-Riemannian metric, $f$ smooth map of a canonical base in the space of $r$-differential forms, analogous of the base provided by the trigonometric functions in the space of smooth functions on $\mathbb S^1$.
\item\emph{Dynamics.} 
The presence of Jordan cells (i.e.\ non-trivial monodromy) for a map $f\colon X\to \mathbb S^1$  implies the existence of closed trajectories for flows on $X$ for which $f\colon X\to \mathbb S^1$ is Lyapunov.
The non-triviality of $C^m_r(f)$ implies existence of instantons between rest points. 
More precisely, Theorem~\ref{T7} above permits to describe the rank of boundary map $\partial_r$ in the Morse complex or the Novikov complex relevant quantity in the counting of instantons. 
More can be found in Chapter~8 of the book in preparation \cite{BUR}. 
\end{itemize}

The authors thank the referee for a careful reading and useful observations.
The present version ows much to his critics and suggestions.

\section{Graph representations}\label{S2}

Fix a field $\kappa$.
Let $\Gamma$ be an oriented graph, possibly with infinitely many vertices.
A $\Gamma$-representation $\rho$ assigns to each vertex $x$ of $\Gamma$ a finite dimensional vector space $V_x$ and to each arrow $a\colon x\to y$ between two vertices $x$ and $y$ a linear map $\phi_a\colon V_x\to V_y$. 
Suppose $\rho'$ is another $\Gamma$-representation with vector spaces $V_x'$ and linear maps $\phi_a'\colon V_x'\to V_y'$.
A morphism from $\rho$ to $\rho'$ is a collection of linear maps $\psi_x\colon  V_x\to V_x'$ such that $\phi_a'\psi_x=\psi_y\phi_a$ for all arrows $a\colon x\to y$ between any two vertices $x$ and $y$.
More succinctly, a $\Gamma$-representation may be defined as a covariant functor from the (small) category generated by the graph $\Gamma$ to the (abelian) category of finite dimensional vector spaces.
A morphism of $\Gamma$-representations is just a natural transformation between two such functors.
Consequently, $\Gamma$-representations and morphisms between $\Gamma$-representations form an abelian category, see \cite{BD68,P73}.
In particular, the concepts of isomorphism (equivalence), direct sum, kernel, image, and short exact sequence are well defined for (morphisms between) $\Gamma$-representations.

Suppose $\rho_\alpha$, $\alpha\in\mathcal A$, is a family of $\Gamma$-representations with vector spaces $V_x^\alpha$ and linear maps $\phi^\alpha_a\colon V_x^\alpha\to V_y^\alpha$.
If, for every vertex $x$, all but finitely many of the vector spaces $V^\alpha_x$ are trivial, then one considers the $\Gamma$-representation $\bigoplus_{\alpha\in \mathcal A}\rho_\alpha$ which assigns to a vertex $x$ the vector space $\bigoplus_\alpha V^\alpha_x$ and to an arrow $a\colon x\to y$ the linear map $\bigoplus_\alpha\phi_a^\alpha\colon\bigoplus_\alpha V^\alpha_x\to\bigoplus_\alpha V^\alpha_y$.

A $\Gamma$-representation $\rho$ is called:
\emph{regular}, if all the linear maps $\phi_a$ are isomorphisms;
\emph{with finite support}, if $V_x=0$ for all but finitely many vertices; and
\emph{indecomposable}, if it is not isomorphic to the sum of two non-trivial representations. 

A standard result in abelian categories, see \cite[Theorem~1]{A56}, \cite[Chapter~5]{P73} or \cite[Theorem~6.45]{BD68}, formulated for $\Gamma$-representations with finite support, reads:

\begin{theorem}[Krull--Remak--Schmidt]
Any $\Gamma$-representation with finite support is isomorphic to a direct sum $\rho_1\oplus\cdots\oplus\rho_n$ with indecomposable summands $\rho_i$. 
Moreover, the components $\rho_i$ are unique up to isomorphism and reordering.
\end{theorem}

In this paper the oriented graph $\Gamma$ of primary concern will be $G_{2m}$ and for technical reasons we will need the infinite oriented graph $\mathcal Z$. 
The graph $\Gamma=G_{2m}$ has vertices $x_1,x_2,\dotsc,x_{2m}$ and arrows $a_i\colon x_{2i-1}\to x_{2i}$, $1\leq i\leq m$, and $b_i\colon x_{2i+1}\to x_{2i}$, $1\leq i\leq m-1$ and $b_m\colon x_1\to x_{2m}$,
see Figure~\ref{F:G2m}.
The graph $\Gamma=\mathcal Z$ has vertices $x_i$, $i\in\mathbb Z$, and arrows $a_i\colon x_{2i-1}\to x_{2i}$ and $b_i\colon x_{2i+1}\to x_{2i}$, see Figure~\ref{F:Z}.

\begin{figure}
$$
\xymatrix{
 &x_2\\
x_3\ar[ur]_{b_1}\ar[d]^{a_2}& & x_1\ar[ul]^{a_1}\ar[d]_{b_m}\\
x_4& & x_{2m}\\
x_{2m-3} \ar@{<.>}[u]\ar[dr]^{a_{m-1}}& & x_{2m-1}\ar[u]^{a_m}\ar[dl]_{b_{m-1}}\\
& x_{2m-2}
}
$$
\caption{The graph $G_{2m}$.}
\label{F:G2m}
\end{figure}

\begin{figure} 
$$
\xymatrix{
\cdots & x_{2i-1}\ar[l]_-{b_{i-1}} \ar[r]^-{a_i} & x_{2i} & x_{2i+1}\ar[l]_-{b_{i}} \ar[r]^-{a_{i+1}} & x_{2i+2} & \cdots \ar[l]_-{b_{i+1}}
}
$$
\caption{The graph $\mathcal Z$.}
\label{F:Z}
\end{figure}

Both $G_{2m}$ and $\mathcal Z$-representations $\rho$ will be recorded as 
\begin{equation*}
\rho=\bigl\{V_r,\  \alpha_i\colon V_{2i-1}\to V_{2i},\  \beta_i\colon V_{2i+1}\to V_{2i} \bigr\}
\end{equation*}  
in the first case with $1\leq r\leq 2m$, $1\leq i \leq m$,  with the convention that $V_{2m+1}=V_1$, in the second case with $r,i\in \mathbb Z$.

Any regular $G_{2m}$-representation $\rho=\{V_r,\alpha_i,\beta_i\}$, not necessarily indecomposable, is equivalent i.e.\ isomorphic to the representation 
\begin{equation*}
\rho(V,T) := \bigl\{V'_r=V, \alpha'_1= T, \alpha'_i={\rm Id}\   i\ne 1, \  \beta'_i={\rm Id}\bigr\}
\end{equation*}
with $T=\beta^{-1}_m\cdot \alpha_m^{-1}\cdots \beta_1^{-1}\cdot\alpha_1$.
The isomorphism i.e.\ conjugacy class of the pair $(V,T)$ is called \emph{monodromy}.

According to the Krull--Remak--Schmidt theorem, every $G_{2m}$-representation $\rho$ decomposes as sum, $\rho\cong\rho'\oplus\rho''$, where $\rho''$ is regular and $\rho'$ has no non-trivial regular summand.
Moreover, both parts $\rho'$ and $\rho''$ are unique up to isomorphisms. 
The regular part  $\rho''$ provides  the \emph{monodromy} of $\rho$ which as pointed out above is determined by an isomorphism class of pairs $(V,T).$ 

The $\mathcal Z$-representations we consider are either with finite support or periodic. 
The representation is periodic if for some integer $N$, $V_r=V_{r+2N}$, $\alpha_i=\alpha_{i+N}$, $\beta_i=\beta_{i+N}$. 
Both type of $\mathcal Z$-representations, periodic and with finite support, as well as a finite direct sum of of such 
representations will be referred to as \emph{good} $\mathcal Z$-representations.

\subsection{The indecomposable $G_{2m}$-representations and  the indecomposable good $\mathcal Z$-representations.}

The indecomposable $G_{2m}$-representations are of two types, cf.~\cite[Section~4]{BD11}.  
In a slightly different formulation the identification below was first established in \cite{N73} and \cite{DF73}. 

\subsubsection*{Type I (bar codes)} 

These representations are labeled by the four types of intervals with integer valued ends $r$ and $s$, $r\leq s$, $1\leq r\leq m$, namely $[r,s]$ with $r\leq s$, and $(r,s)$, $[r,s)$, $(r, s]$ with $r<s$.
If $I$ is an interval of this form, then the corresponding representation will be denoted by $\rho^G(I)$.
More explicitly, they are denoted by $\rho^G (\{r, s\})$ 
with ``$\{$'' notation for either ``$[$'' or ``$($'' and ``$\}$'' for either ``$ ]$'' or ``$ )$'' and graphically described as follows.\footnote{A simpler  labeling 
 is possible but the one proposed is consistent with the geometric situation the representations are derived from.}  

Suppose the vertices $x_1,x_2,\dotsc,x_{2m-1},x_{2m}$ are located counter-clockwise on the unit circle, say at the the angles 
$t_1<\theta_1< t_2 < \theta_2 <\cdots <t_m < \theta_{m}$, with $0<t_1$ and $\theta_m \leq 2\pi$.

To describe the representation $\rho^G(\{i,j+m k\})$, $1\leq i,j\leq m$, draw the counterclockwise spiral curve from $a=\theta_i$ to $b=\theta_j+2\pi k$ with the ends a 
black or an empty circle to indicate ``closed'' or ``open'' interval. 
Black circle indicates that the end is on the spiral, empty circle that is not. 

The vector space $V_i$ is generated by the intersection points of the spiral with the radius corresponding to the vertex $x_i$ and $\alpha_i$ and $\beta_i$ are defined on generators as follows:
A generator $e$ of $V_{2i\pm1}$ is sent to the generator $e'$ of $V_{2i}$ if connected by a piece of spiral or to $0$ if not.
The spiral in Figure~\ref{barcode} below corresponds to $k=2$,  and defines the representation $\rho^G([i,j+2m))$.

\begin{figure}
\includegraphics[height=6cm]{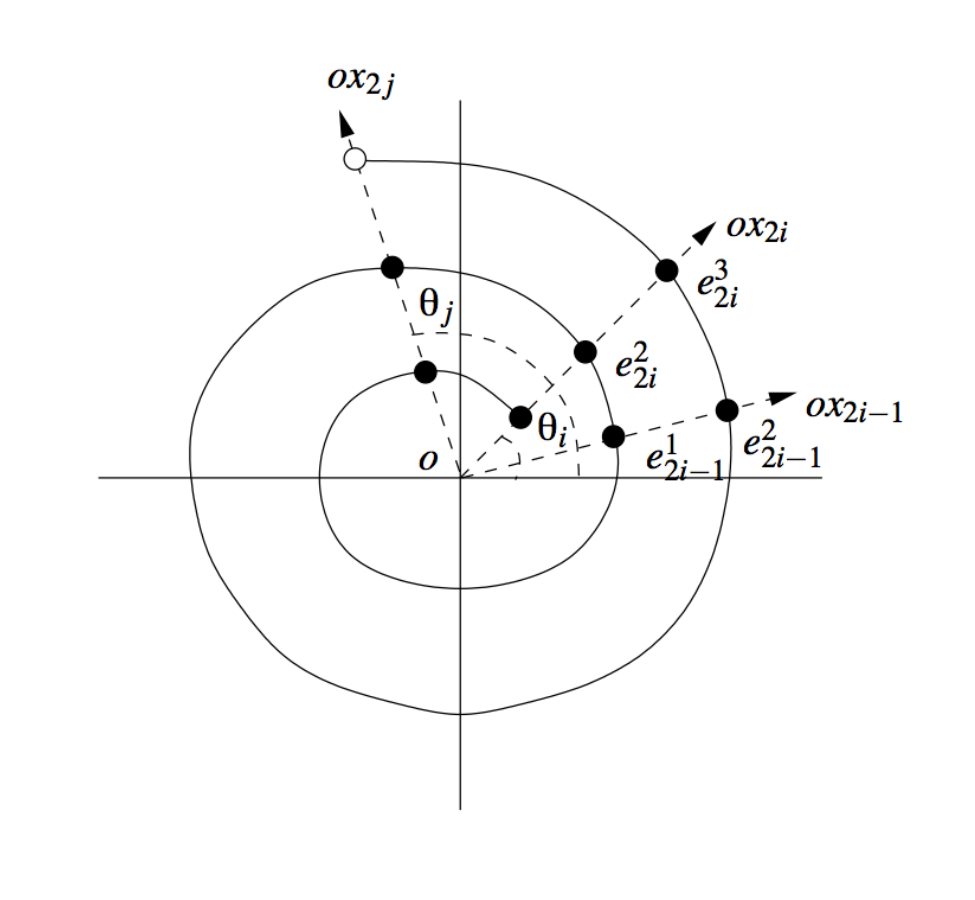}
\caption{The spiral for $[i, j+2m)$.}
\label{barcode}
\end{figure}

\subsubsection*{Type II (Jordan blocks / cells)}

They are labeled by Jordan blocks $J=(V,T)$ and denoted by $\rho^G(J)$. 
Recall that a Jordan block is an isomorphism class of indecomposable pairs $(V,T)$, $V$ a vector space $T\colon V\to V$ an isomorphism.  
The representation $\rho^G(J)$ has all vector spaces equal to $V$, $\alpha_1=T$ and $\beta_1=\alpha_i=\beta_i=\textrm{Id}$ for $2\leq i\leq m$. 
If $J=(\kappa^k, T(\lambda, k))$ we also write $\rho^G(J):=\rho^G(\lambda, k)$.

One refers to both the labeling interval $\{r,s\}$ and the representation $\rho^G(\{r,s\})$ as \emph{bar code} and to the indecomposable pair $J$ and the representation $\rho^G(J)$ as \emph{Jordan block}.

By the Krull--Remak--Schmidt theorem and the classification of indecomposables, any $G_{2m}$-representation $\rho$ can be decomposed as a sum of indecomposables,
\begin{equation}\label{E4}
\begin{aligned}
\rho\cong\bigoplus_{I\in \mathcal B(\rho)} \rho^G(I)\oplus \bigoplus_{J\in\mathcal J(\rho)} \rho^G(J).
\end{aligned}
\end{equation}
Here $\mathcal B(\rho)$ denotes the collection of all bar codes (with proper multiplicity) appearing in the decomposition of $\rho$, and $\mathcal J(\rho)$ denotes the collection of all Jordan blocks (with proper multiplicity) appearing in the decomposition of $\rho$.

We further decompose,
$$
\mathcal B(\rho)={\mathcal B}^c(\rho)\sqcup\mathcal B^o(\rho)\sqcup\mathcal B^{co}(\rho)\sqcup\mathcal B^{oc}(\rho)
$$
where ${\mathcal B}^c(\rho)$, $\mathcal B^o(\rho)$, $\mathcal B^{co}(\rho)$ and $\mathcal B^{oc}(\rho)$ denote the subcollections (with multiplicities) of barcodes with both ends closed, open, closed-open and open-closed, respectively.
For $\lambda\in\kappa\setminus 0$ one denotes by $\mathcal J_\lambda(\rho)$ the collection (with multiplicities) of Jordan blocks with eigenvalue $\lambda$.\footnote{If the 
linear map $T$ in the Jordan block $J=(V,T)$ has an eigenvalue $\lambda\in\kappa$ then this is the only eigenvalue, and $J$ is similar to $(\kappa^k,T(\lambda,k))$, see~\eqref{E1}.}

The indecomposable $\mathcal Z$-representations with finite support are all \emph{bar codes} indexed by four type of intervals $I$ with ends $i$ and $j$, $[i,j]$ with $i\leq j$,  
or $[i,j)$, $(i,j]$, $(i,j)$ with $i<j$ and denoted by $\rho^{\mathcal Z}(I)$. 
The only periodic indecomposable representation is denoted by $\rho^{\mathcal Z}_\infty$.
The representation denoted by $\rho^{\mathcal Z}(I)$ has all vector spaces equal to either $\kappa$ or $0$, 
the linear maps $\alpha_i$ and $\beta_j$ are equal to the identity if both, the source and the target, are non-trivial and zero otherwise.
Precisely, 
\begin{enumerate}[(i)]
\item 
$\rho^{\mathcal Z}([i,j])$, $i\leq j$ has $V_r=\kappa$ for $r=2i,\dotsc,2j$, and $V_r=0$ otherwise,
\item
$\rho^{\mathcal Z}([i,j))$, $i<j$ has $V_r=\kappa$ for $r=2i,\dotsc,2j-1$, and $V_r=0$ otherwise,
\item 
$\rho^{\mathcal Z}((i,j])$, $i<j$ has $V_r=\kappa$ for $r=2i+1,\dotsc,2j$, and $V_r=0$ otherwise,
\item 
$\rho^{\mathcal Z}((i,j))$, $i<j$ has $V_r=\kappa$ for $r=2i+1,\dotsc,2j-1$, and $V_r=0$ otherwise.
\end{enumerate}
Both, the labeling interval $I$ and the representation $\rho^{\mathcal Z}(I)$, will be referred to as \emph{bar code}.

The indecomposable representation $\rho^{\mathcal Z}_\infty$,  
has all vector spaces $V_r=\kappa$ and all linear maps $\alpha_i=\beta_i=\textrm{Id}$.

The Krull--Remak--Schmidt decomposition for representations with finite support extends to all good $\mathcal Z$-representations.  For the reader's convenience an argument is presented at the end of the next section since it involves the definition of {\it truncation}.

Precisely, any such (good) representation $\rho$ is a sum (in the sense described above) of possibly infinitely many indecomposables 
with finite support and finitely many copies of $\rho^{\mathcal Z}_\infty$,  
\begin{equation}\label{E5}
\rho\cong\bigoplus_{I\in\mathcal B(\rho)} \rho^{\mathcal Z}(I) \oplus \bigoplus_n\rho^\mathcal Z_\infty, 
\end{equation}
with indecomposable factors and their multiplicity unique up to isomorphism. 
Here $\mathcal B(\rho)$ the collection of all bar codes (with multiplicity) appearing in the decomposition, and $\bigoplus _n\rho^{\mathcal Z}_\infty$ denotes the sum of $n$ copies of $\rho^{\mathcal Z}_\infty$.
Each indecomposable $\rho^{\mathcal Z}(I)$ or $\rho^{\mathcal Z}_\infty$ appears with finite multiplicity. 
We let $\mathcal B^c(\rho)$, $\mathcal B^o(\rho)$, $\mathcal B^{co}(\rho)$ and $\mathcal B^{oc}(\rho)$ denote the subcollections (with multiplicities) of closed, open, closed-open and open-closed bar codes in $\mathcal B(\rho)$.
Moreover, $\mathcal J^{\mathcal Z}(\rho)$ denotes the collection of all copies of $\rho^{\mathcal Z}_\infty$ which appear as independent direct summands in $\rho$. The decomposition (\ref {E5}) is discussed in the next subsection.

In view of the above comments, statements about $G_{2m}$-representations or about good $\mathcal Z$-representations, formulated in this paper, will be verified first
for the indecomposable representations described above and if hold true, by the Krull--Remak--Schmidt decomposition theorem, concluded for arbitrary representations.

\subsection{Two basic constructions.}\label{SS:basic}

The \emph{infinite cyclic covering} of a $G_{2m}$-re\-pre\-sen\-ta\-tion $\rho=\{V_r, a_i, b_i, 1\leq r\leq 2m, 1\leq i\leq m\}$ is the periodic $\mathcal Z$-representation 
$\tilde \rho:=\{\tilde V_r, \tilde a_i, \tilde b_i, r, i\in \mathbb Z\}$ defined by $\tilde V_{r +2mk}=V_r$, $\tilde a_{i+km}= a_i$, and $\tilde b_{i+km}= b_i$. 
When applied to indecomposable $\rho^G(I)$ or $\rho^G(J)$, where $I$ denotes an interval and $J=(V,T)$ is a Jordan block, one obtains: 
\begin{equation}\label{E:qwerty1}
\begin{aligned}
\widetilde{\rho^G(I)}&=\bigoplus_{k\in \mathbb Z} \rho^{\mathcal Z}(I+m k)  \\
\widetilde{\rho^G(J)}&=\bigoplus_n \rho^{\mathcal Z}_\infty,  \qquad n=\dim V,\quad J=(V,T).
\end{aligned}
\end{equation}
Here $I+r$, $r\in \mathbb Z$ denotes the translate of the interval $I$, by $r$ units. 

The \emph{truncation} $T_{k,l}(\rho)$ of a $\mathcal Z$-representation $\rho$ is defined for any pair of integers $k,l$ with $k\leq l$.
If $\rho$ is a $G_{2m}$-representation, then the truncation $T_{k,l}(\rho)$ is defined for any pair of integers $k,l$ with $1\leq k\leq l\leq m$.
In either case, if $\rho=\{V_r, \alpha_i,\beta_i\}$ is a representation then the truncation is defined by $T_{k,l}(\rho):=\{V'_r, \alpha'_i, \beta'_i\}$ where:
\begin{equation}
\begin{aligned}
V'_r&=
\begin{cases} V_r & \textrm{if $2k\leq r\leq 2l$, and}\\ 0 & \textrm{otherwise.}\end{cases}
\\
\alpha'_r&=
\begin{cases} \alpha_r & \textrm{if $k+1\leq r\leq l$, and}\\ 0 & \textrm{otherwise.}\end{cases}
\\
\beta'_r&=
\begin{cases} \beta_r & \textrm{if $k\leq r\leq l-1$, and}\\  0 & \textrm{otherwise.}
\end{cases}
\end{aligned}
\end{equation} 
When applied to indecomposable $\mathcal Z$-representations one obtains
\begin{equation}\label{E:qwerty3}
\begin{aligned}
T_{k,l}(\rho^{\mathcal Z}_\infty)&= \rho^{\mathcal Z}([k,l]),\\
T_{k,l}(\rho^{\mathcal Z}(I))&=\rho^{\mathcal Z}(I\cap [k,l]),
\end{aligned}
\end{equation}
and when applied to indecomposable $G_{2m}$-representations one obtains 
\begin{equation}\label{E:qwerty2}
\begin{aligned}
T_{k,l}(\rho^G(I))&=\bigoplus_{r\in \mathbb Z}\rho^G(I_r), & I_r&=(I + rm)\cap [k,l],
\\
T_{k,l}(\rho^G(J))&=\bigoplus_n {\rho^G([k,l])}, & n&=\dim V,\quad J=(V,T).
\end{aligned}
\end{equation}
Here $I+rm$ denotes the translate of the interval $I$ to the right by $rm$ units.

 
Given a $G_{2m}$-representation $\rho$ one writes:  
$\tilde {\mathcal J} (\rho)$ for the collection which contains with any Jordan block $J=(V,T)\in \mathcal J(\rho)$, a number of $n(J)=\dim (V)$ copies of $\rho^{\mathcal Z}_\infty$ hence a total of 
$\sum_{J=(V,T)\in \mathcal J(\rho)} \dim V$ copies of $\rho^{\mathcal Z}_\infty$, 
and $\tilde{\mathcal B}^{-}(\rho):= \{  I+2\pi k \mid I\in \mathcal B^{-}(\rho), k\in \mathbb Z\}$ with $\tilde{\mathcal B}^{-}$  any of 
$\tilde{\mathcal B}$, $\tilde{\mathcal B}^c$, $\tilde{\mathcal B}^o$, $\tilde{\mathcal B}^{co}$, $\tilde{\mathcal B}^{oc}$.

In terms of this notation is convenient to keep in mind the following book-keeping. 


\begin{lemma}\label{O41}
(a) If $\rho$ is a $G_{2m}$-representation then 
$$
\mathcal B(\tilde \rho)=\tilde{\mathcal B}(\rho),\qquad\mathcal J(\tilde \rho)=\tilde{\mathcal J}(\rho),
$$
$$
\mathcal B^c(\tilde \rho)=\tilde{\mathcal B}^c(\rho),\quad
\mathcal B^o(\tilde \rho)=\tilde{\mathcal B}^o(\rho),\quad
\mathcal B^{co}(\tilde \rho)=\tilde{\mathcal B}^{co}(\rho),\quad
\mathcal B^{oc}(\tilde \rho)=\tilde{\mathcal B}^{oc}(\rho),
$$
and:
\begin{equation*}
\begin{aligned}
\mathcal B^c(T_{k,l}(\rho))&=\{I\cap[k,l]:\text{$I\in\tilde{\mathcal B}(\rho)$ such that $I\cap [k, l]$ is non-empty and closed}\}
\\&\qquad \sqcup \{\text{$[k,l]$ with multiplicity $\sharp\tilde{\mathcal J}(\rho)$}\},
\\
\mathcal B^o(T_{k,l}(\rho))&=\{I\in\tilde{\mathcal B}^o(\rho): I\subset [k,l] \}
\\
\mathcal J(T_{k,l}(\rho))&=\emptyset.
\end{aligned}
\end{equation*}

(b) If $\rho$ is a good $\mathcal Z$-representation then:
\begin{equation*}
\begin{aligned}
\mathcal B^c(T_{k,l}(\rho))&=\{I\cap[k,l]:\text{$I\in \mathcal B(\rho)$ such that $I\cap [k,l]$ is non-empty and closed}\}
\\&\qquad \sqcup \{\textrm{$[k,l]$ with multiplicity $\sharp{\mathcal J}(\rho)$}\},
\\
\mathcal B^o(T_{k,l}(\rho))&=\{I\in \mathcal B^o(\rho) : I\subset [k,l] \},
\\
\mathcal J(T_{k,l}(\rho))&=\emptyset.
\end{aligned}
\end{equation*}
\end{lemma} 

\begin{proof}
To see (a) observe that infinite cyclic covering and truncation are both additive constructions, that is to say,
$\widetilde{\rho_1\oplus\rho_2}=\tilde\rho_1\oplus\tilde\rho_2$ and $T_{k,l}(\rho_1\oplus\rho_2)=T_{k,l}(\rho_1)\oplus T_{k,l}(\rho_2)$ for any two $G_{2m}$-representations $\rho_1$ and $\rho_2$.
The expressions for $\mathcal B(\rho)$, $\mathcal B^c(\rho)$, $\mathcal B^o(\rho)$, $\mathcal B^{co}(\rho)$, $\mathcal B^{oc}(\rho)$, and $\mathcal J(\rho)$ thus follow immediately from \eqref{E:qwerty1}.
Similarly, the expressions for $\mathcal B^c(T_{k,l}(\rho))$, $\mathcal B^o(T_{k,l}(\rho))$, and $\mathcal J(T_{k,l}(\rho))$ follow from \eqref{E:qwerty2}.
Since the truncation is also additive for good $\mathcal Z$-representations, part (b) follows from \eqref{E:qwerty3}.
\end{proof}

{\it  Krull-Remak-Schmidt decomposition for good $\mathcal Z-$ representations:}

If $\rho$ has finite support this is the standard Krull-Remak-Schmidt decomposition theorem in an abelian category.

If $\rho$ is periodic it is isomorphic to some $\tilde \rho'$ with $\rho'$ a $G_{2m}-$representation. neither $m$ nor $\rho'$ is unique.  Clearly a decomposition of $\rho'$ as sum of the barcode-representations $I'_1$  with multiplicity $r'_1$, $I_2'$ with multiplicity $r'_2$ $\dots$  $I'_{N'}$ with multiplicity $r'_{N'_1}$ and Jordan blocks  whose total dimension of the underlying vector space $n'_1$ provides a decomposition of $\rho$ as an infinite sum  of $I'_1 +mk$  with multiplicity $r'_1$, $I_2'+mk$ with multiplicity $r'_2$ $\dots$  $I'_{N'}+mk$ with multiplicity $r'_{N'_1}$ for any $k\in \mathbb Z$ and $n'_1$ copies of $\rho^{match Z}_\infty.$  This implies the existence of decomposition as stated in (\ref{E5}).

Note that 
for any decomposition of type (\ref {E5}) the following holds:

- there are only finitely many barcodes up to translation by multiples of $m$  which makes the length of bar codes bounded from above,

- each barcode appears with finite multiplicity and  

- there are finitely many components $\rho^{\mathcal Z}_\infty$. 

\noindent Since a truncation $T_{i,j}$ converts a barcode into a barcode (possibly empty) and $\rho^{\mathcal Z_\infty}$ into a closed barcode $[i,j],$  comparing the outcome of enough many truncation $T_{i,j}$ (with $(j-i)$ larger than the length of the barcodes of the  two representations) and in view of the validity of the Krull-Remak-Schmidt theorem for finite graphs, on obtains the equality in the number of each type of barcodes  and of the number of components $\rho^{\mathcal Z}_\infty$ in any two decompositions (\ref {E5}).

\subsection{The matrix $M(\rho)$ and the representation $\rho_u$}\label{SS:rhou}

For every $G_{2m}$-representation $\rho=\{V_r, \alpha_i, \beta_i\}$, $1\leq r\leq 2m$, $1\leq i\leq m$, we introduce a linear map,
$M(\rho)\colon\bigoplus_{1\leq i\leq m}V_{2i- 1}\to\bigoplus_{1\leq i\leq m}V_{2i}$, defined by the block matrix:
\begin{equation*}
\begin{pmatrix}
\alpha_1 & -\beta_1 & 0        & \dots        & 0            \\
    0    & \alpha_2 & -\beta_2 & \ddots       & \vdots       \\
\vdots   & \ddots   & \ddots   & \ddots       & 0            \vspace{0.5ex} \\
0        & \dots    & 0        & \alpha_{m-1} & -\beta_{m-1} \vspace{1ex}   \\
-\beta_m & 0        & \dots    & 0            & \alpha_m
\end{pmatrix}.
\end{equation*}
Moreover, for $u\in\kappa\setminus0$ we let $\rho_u=\{V'_r,\alpha'_i,\beta'_i\}$ denote the $G_{2m}$-representation 
where $V'_r=V_r$, $\alpha'_1=u \alpha_1$, $\alpha'_i=\alpha_i$ for $i\ne1$ and $\beta'_i=\beta_i$. 

For a $\mathcal Z$-representation $\rho=\{V_r,\alpha_i,\beta_i\}$ the linear map $M(\rho)\colon\bigoplus_{i\in \mathbb Z}V_{2i-1}\to\bigoplus_{i\in \mathbb Z} V_{2i}$, 
is defined by the infinite block matrix  with entries:
\begin{equation*}
M(\rho)_{2r-1, 2s}=
\begin{cases} 
\alpha_r    & \textrm{if $s=r$,}\\
\beta_{r-1} & \textrm{if $s=r-1$, and}\\
0           & \textrm{otherwise.}
\end{cases}
\end{equation*}

If the $\mathcal Z$-representation $\tilde \rho$ is the infinite cyclic covering of a $G_{2m}$-representation $\rho$, then the shift $i\mapsto i+m$ 
defines isomorphisms $t_{\textrm{odd}}\colon\bigoplus_{i\in \mathbb Z}V_{2i-1}\to \bigoplus_{i\in \mathbb Z}V_{2(i+m)-1}$ and $t_{\textrm{even}}\colon\bigoplus_{i\in \mathbb Z}V_{2i}\to \bigoplus_{i\in \mathbb Z}V_{2(i+m)}$ such that $t_\textrm{even}\circ M(\tilde\rho)=M(\tilde\rho)\circ t_\textrm{odd}$.
The induced automorphisms on $\ker M(\tilde\rho)$ and $\coker M(\tilde\rho)$ will be denoted by:
\begin{equation}\label {EQ8}
t\colon\ker M(\tilde \rho)\to \ker M(\tilde \rho)\qquad\text{and}\qquad
t\colon\coker M(\tilde \rho)\to \coker M(\tilde \rho).
\end{equation}

For every $\Gamma$-representation $\rho$ introduce an $\mathbb N_0$-valued function $\dim\rho$ on the set of vertices of $\Gamma$, defined by $\dim\rho(x):=\dim V_x$, where $V_x$ is the vector space assigned to the vertex $x$ by $\rho$.
Moreover, for every representation $\rho$ of $G_{2m}$ or $\mathcal Z$ we put $\dim \ker (\rho):=\dim\ker M(\rho)$ and $\dim \coker(\rho):=\dim\coker M(\rho)$.

As noticed in \cite{BD11} one has:

\begin{lemma}[\cite{BD11}] 
Suppose $\rho$, $\rho_1$ and $\rho_2$ are representations of $G_{2m}$ or $\mathcal Z$.
Then the following hold true for $\lambda,u\in\kappa\setminus0$, $k\in\mathbb N$, and all intervals $I$ with integral endpoints:
\begin{enumerate}[(a)]
\item $\dim(\rho_u)= \dim(\rho)$.
\item $(\rho_1 \oplus \rho_2)_u= (\rho_1)_u\oplus (\rho_2)_u$.
\item $\rho^G(\lambda,k)_u= \rho^G(u\lambda, k)$.
\item $\rho^G(I)_u=\rho^G(I)$.
\item $\dim(\rho_1\oplus \rho_2)= \dim(\rho_1) + \dim(\rho_2)$.  
\item $\dim \ker (\rho_1\oplus \rho_2)= \dim \ker (\rho_1) + \dim \ker (\rho_2)$.
\item $\dim \coker(\rho_1\oplus \rho_2)= \dim \coker(\rho_1) + \dim\coker(\rho_2)$.
\end{enumerate}
\end{lemma}

\begin{proof}
The statements in parts (a) and (b) are trivial.
Part (c) follows from the fact that the Jordan block $T(u\lambda,k)$, see \eqref{E1}, is conjugate to $uT(\lambda,k)$ via the diagonal matrix $\operatorname{diag}(1,u,u^2,\dotsc,u^{k-1})$.
Part (d) readily follows from the classification of indecomposable $G_{2m}$-representations.
The statement in (e) is obvious.
To see (f) and (g) note that $M(\rho_1\oplus\rho_2)=M(\rho_1)\oplus M(\rho_2)$, hence $\ker(\rho_1\oplus\rho_2)=\ker(\rho_1)\oplus\ker(\rho_2)$ and $\coker(\rho_1\oplus\rho_2)=\coker(\rho_1)\oplus\coker(\rho_2)$.
For $G_{2m}$-representations the latter can be found in \cite[Proposition~4.1]{BD11}.
\end{proof}

Moreover: 

\begin{proposition}[\cite{BD11}]\label {P23}\

\begin{enumerate}[(a)]
\item
For indecomposable $G_{2m}$-representations of type I we have
\begin{enumerate}[({a}1)]
\item $\dim \ker \rho^G([i,j])=0$, $\dim \coker \rho^G([i,j])=1$,
\item $\dim \ker \rho^G([i,j))=0$, $\dim \coker \rho^I([i,j))=0$,
\item $\dim \ker \rho^G((i,j])=0$, $\dim \coker \rho^G((i,j])=0$,
\item $\dim \ker \rho^G((i,j))=1$, $\dim \coker \rho^G((i,j))=0$,
\end{enumerate}
and for indecomposable $\mathcal Z$-representations with finite support:
\begin{enumerate}[({a}1)]\setcounter{enumii}{4}
\item $\dim \ker\rho^{\mathcal Z}([i,j])=0$, $\dim \coker\rho^{\mathcal Z}([i,j])=1$,
\item $\dim \ker\rho^{\mathcal Z}([i,j))=0$, $\dim \coker\rho^{\mathcal Z}([i,j))=0$,
\item $\dim \ker\rho^{\mathcal Z}((i,j])=0$, $\dim \coker\rho^{\mathcal Z}((i,j])=0$,
\item $\dim \ker\rho^{\mathcal Z}((i,j))=1$, $\dim \coker\rho^{\mathcal Z}((i,j))=0$.
\end{enumerate}

\item 
For indecomposable $G_{2m}$-representations of type II we have
\begin{enumerate}[({b}1)]
\item $\dim \ker\rho^G(J)=0$ if $J\ne (\kappa^k, T(1,k))$;  $\dim \ker\rho^G(\kappa^k, T(1,k))=1$ 
\item $\dim \coker\rho^G(J)=0$ if $J\ne (\kappa^k, T(1,k))$;  $\dim \coker\rho^G(\kappa^k, T(1,k))=1$
\end{enumerate}
and for the $\mathcal Z$-representation $\rho^{\mathcal Z}_\infty$:
\begin{enumerate}[({b}1)]\setcounter{enumii}{2}
\item $\dim \ker (\rho^{\mathcal Z}_\infty)=0$,
\item $\dim \coker (\rho^{\mathcal Z}_\infty)=1$.
\end{enumerate}
\end{enumerate}
\end{proposition}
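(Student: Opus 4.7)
The proof is a direct computation of $M(\rho)$ for each indecomposable representation. In every case the map takes the form of a discrete ``telescoping'' derivative, so the dimensions of $\ker M(\rho)$ and $\coker M(\rho)$ are read off essentially by inspection.

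For the bar code representations $\rho^I(I)$ and the finitely supported $\mathcal Z$-bar codes $\rho(I)$, I would introduce scalar coordinates $x_k=v_{2k-1}$ for the odd vertices on which $V$ is nonzero and $y_k=w_{2k}$ for the even ones. The defining rule $y_k=\alpha_k x_k-\beta_k x_{k+1}$ then turns $M(\rho)$ into a discrete derivative $x_k\mapsto x_k-x_{k+1}$ on interior indices, supplemented by a boundary term $-x_{\text{first}}$ at a closed left endpoint and $+x_{\text{last}}$ at a closed right endpoint. For $\rho([i,j])$ (two closed endpoints) the map
\[
(x_{i+1},\dots,x_j)\longmapsto(-x_{i+1},\,x_{i+1}-x_{i+2},\dots,x_{j-1}-x_j,\,x_j)
\]
is injective with image precisely the kernel of the sum functional, giving $d\ker=0$, $d\coker=1$. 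For $\rho((i,j))$ both boundary terms are absent, producing a surjective telescoping map with one-dimensional kernel of constant sequences, so $d\ker=1$, $d\coker=0$. For $\rho([i,j))$ and $\rho((i,j])$ the source and target have the same dimension, the single remaining boundary term makes $M$ triangular with nonzero diagonal entries, and both invariants vanish.

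For the Jordan block representation $\rho^{II}(\kappa^k,T(\lambda,k))$, all $V_r=V$ and all structural maps are identities except $\alpha_1=T$, so in coordinates $(x_1,\dots,x_m)\in V^m$ the map becomes
\[
(x_1,\dots,x_m)\longmapsto (Tx_1-x_2,\,x_2-x_3,\dots,x_{m-1}-x_m,\,x_m-x_1).
\]
Back-substitution identifies $\ker M(\rho)$ with $\{x\in V\mid Tx=x\}=\ker(T-I)$, and identifies $\coker M(\rho)$ with $V/\img(T-I)=\coker(T-I)$, the sole obstruction to solving the inhomogeneous system being that $\sum_i y_i$ lie in $\img(T-I)$. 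Since $T(\lambda,k)-I$ is invertible for $\lambda\ne 1$ and equals the standard nilpotent shift (kernel and cokernel each one-dimensional) for $\lambda=1$, the stated values follow. The periodic indecomposable $\rho_\infty$ is handled by the same telescoping argument in the direct sum $\bigoplus_{\mathbb Z}\kappa$ of elements of finite support: $M$ is injective by the finite-support condition, and solving the recurrence $x_{i+1}=x_i-y_i$ with finite support requires exactly $\sum_i y_i=0$, so $d\ker=0$ and $d\coker=1$.

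The only step that is not purely routine is the winding type I case $\rho^I(\{i,j+mk\})$ with $k\ge 1$, where each vector space carries several copies of $\kappa$. I expect to handle the bookkeeping either by the same coordinate-wise telescoping (unwinding the spiral once in each block of coordinates) or, more conceptually, by passing to the infinite cyclic covering $\widetilde{\rho^I(I)}$, which decomposes into finitely supported $\mathcal Z$-bar codes $\rho(I+mk)$ already handled, and transferring the dimension counts through the compatibility of $M$ with the folding $\mathcal Z\to G_{2m}$.
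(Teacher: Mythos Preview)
Your proposal is correct and follows the same approach the paper takes: reduce the computation of $\ker M(\rho)$ to the linear system $\alpha_i(v_{2i-1})=\beta_i(v_{2i+1})$ and solve it directly for each indecomposable. The paper's own argument is little more than the statement that these systems ``are easy to do'' for indecomposables, so your write-up is considerably more explicit than what the paper provides, but not different in spirit; in particular your telescoping description and the identification $\ker M(\rho^{II}(J))\cong\ker(T-I)$, $\coker M(\rho^{II}(J))\cong\coker(T-I)$ are exactly the content the paper leaves implicit.
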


\begin{proof}
The statements in (a1), (a2), (a3), (a4), (b1), and (b2) can be found in \cite[Proposition~4.3]{BD11}.
Parts (a5), (a6), (a7), (a8), (b3), and (b4) can be proved analogously.
Indeed, the calculation of the kernel of $M(\rho)$ reduces to the description of the space of solutions of the linear system:
\begin{equation*}
\begin{aligned}
\alpha_1(v_1)&=\beta_1(v_3)\\
\alpha_2(v_3)&=\beta_2(v_5)\\
&\ \, \vdots\\
\alpha_m(v_{2m-1})&=\beta_m(v_1)
\end{aligned}
\end{equation*}
wit $v_{2i-1}\in V_{2i-1}.$  This is straight forward for indecomposable representations.
\end{proof}


\begin{lemma}\label{OO} 
If $\rho=\{V_i,\alpha_i,\beta_i\}$ is a regular $\mathcal Z$-representation, i.e.\ all $\alpha_i$ and $\beta_i$ are isomorphisms,
then $\ker M(\rho)=0$, and for every $i$ the canonical inclusion 
$V_{2i}\to\bigoplus_{r\in \mathbb Z} V_{2r}$ followed by the projection onto $\coker M(\rho)$ provides an isomorphism $V_{2i}\cong\coker M(\rho)$.
\end{lemma}

\begin{proof}
By regularity, the system of equations $\alpha_r(v_{2r-1})=\beta_r(v_{2r+1})$ does not have a non-trivial solution for which only finitely many of the $v_{2r-1}\in V_{2r-1}$ are non-trivial, whence $\ker M(\rho)=0$.
To see that $V_{2i}$ intersects the image of $M(\rho)$ trivially, suppose $w\in V_{2i}\cap\img M(\rho)$.
Then there exist $v_{2r-1}\in V_{2r-1}$, almost all zero, such that $w=\alpha_i(v_{2i-1})-\beta_i(v_{2i+1})$ and $\alpha_r(v_{2r-1})=\beta_r(v_{2r+1})$ for all $r\neq i$.
As $v_{2r+1}=0$ for sufficiently large $r$, the latter equations imply $v_{2i+1}=0$.
Similarly, since $v_{2r-1}=0$ for sufficiently small $r$, we obtain $v_{2i-1}=0$, whence $w=0$.
Finally, if $u\in V_{2r}$, then the corresponding element in $\coker M(\rho)$ can also be represented by by an element in $V_{2r+2}$, namely $\alpha_{r+1}\beta_r^{-1}(u)$.
Consequently, each element in $\coker M(\rho)$ can be represented by an element in $V_{2i}$.
\end{proof}

To formulate a refinement of Proposition~\ref{P23} we introduce additional notation:

\begin{definition}\label {D26}\
For a set $S$ denote by $\kappa[S]$  the vector space generated by $S$, i.e.\ 
the vector space of $\kappa$-valued maps on $S$ with finite support, and by $\kappa[[S]]$  
the vector space of all $\kappa$-valued maps on $S$.  
If $S$ is finite, then $\kappa[S]=\kappa[[S]]$. 

For two subsets $S_1$ and $S_2$ of $S$ the canonical linear maps $\kappa[S_1]\to \kappa[S_2]$, $\kappa[S_1]\to \kappa[[S_2]]$,
or $\kappa[[S_1]]\to \kappa[[S_2]]$ are the unique linear maps which restrict to the identity on $S_1\cap S_2$ and to zero on $S_1\setminus S_2$. 
\end{definition}

We warn the reader of the ``unfortunate notational similarity'' between $\kappa[S]$ and $\kappa[T^{-1},T]$ with the last one denoting the ring of Laurent polynomials of variable $T$.  
Fortunately they appear below in contexts which exclude confusion.

If $\rho=\{V_r,\alpha_i,\beta_i\}$ is a $G_{2m}$-representation, then the diagram
\begin{equation}\label{E:VVV}
\vcenter{
\xymatrix{
\bigoplus_{k<i\leq l} V_{2i-1} \ar[r] \ar[d]^{M(T_{k,l}(\rho))} & \bigoplus _{k'<i\leq l'}V_{2i-1} \ar[r] \ar[d]^{M(T_{k',l'}(\rho))} & \bigoplus_i V_{2i-1}\ar[d]^{M(\rho)}&
\\
\bigoplus_{k\leq i\leq l} V_{2i}   \ar[r] & \bigoplus_{k'\leq i\leq l'} V_{2i} \ar[r] & \bigoplus_i V_{2i}&
}}
\end{equation}
commutes, for all integers $1\leq k'\leq k\leq l\leq l'\leq m$, and we obtain induced linear maps
\begin{equation}\label{E:iii}
\xymatrix{
\ker M(T_{k,l}(\rho))\ar[r]^-{i}      &\ker M(T_{k',l'}(\rho))\ar[r]^-{i'}     &\ker M(\rho)&}
\end{equation}
as well as
\begin{equation}\label{E:jjj}
\xymatrix{\coker M(T_{k,l}(\rho))    \ar[r]^-{j}      &\coker M(T_{k',l'}(\rho))     \ar[r]^-{j'}     &\coker M(\rho).&}
\end{equation}
The same holds true if $\rho$ is a good $\mathcal Z$-representation and $k'\leq k\leq l\leq l'$.
For either representation, the linear maps $i$ and $i'$ in \eqref{E:iii} are injective since the horizontal inclusions in diagram \eqref{E:VVV} are injective.
The maps $j$ and $j'$ in \eqref{E:jjj} need not be injective in general.
Correspondingly, see Definition~\ref{D26}, we consider the linear maps
\begin{equation}\label{E19a}
\xymatrix{
\kappa[\mathcal B^o(T_{k,l}(\rho))] \ar[r] &\kappa[\mathcal B^o(T_{k',l'}(\rho))] \ar[r] &\kappa[\mathcal B^o(\rho)\sqcup \mathcal J]}
\end{equation}
and
\begin{equation}\label{E19}
\xymatrix{
\kappa[\mathcal B^c(T_{k,l}(\rho))
] \ar[r] &\kappa[\mathcal B^c(T_{k',l'}(\rho))
] \ar[r] &\kappa[\mathcal B^c(\rho)\sqcup \mathcal J]}
\end{equation} 
where $\mathcal J$ is defined as follows.
In \eqref{E19a} $\mathcal J=\emptyset$ if $\rho$ is a good $\mathcal Z$-representation and $\mathcal J= \mathcal J_1(\rho)$ if $\rho$ is a $G_{2m}$-representation. 
In \eqref{E19} $\mathcal J=\mathcal J(\rho)$ if $\rho$ is a good $\mathcal Z$-representation and  $\mathcal J= \mathcal J_1(\rho) $ if $\rho$ is a $G_{2m}$-representation. 
The linear maps in \eqref{E19a} are injective since, according to Lemma~\ref{O41}, we have inclusions 
$\mathcal B^o(T_{k,l}(\rho))\subseteq \mathcal B^o(T_{k',l'}(\rho))\subseteq\mathcal B^o(\rho)\subseteq \mathcal B^o(\rho)\sqcup \mathcal J$.
Lemma~\ref{O41} also shows that the linear maps in \eqref{E19} will not be injective in general.
Using decompositions as in \eqref{E4} and \eqref{E5}, additivity of the constructions, and Proposition~\ref{P23}, we see that there are linear isomorphisms
\begin{equation*}
\begin{aligned}
\kappa [\mathcal B^o(T_{k,l}(\rho))]&\cong\ker M(T_{k,l}(\rho)),&
\kappa [\mathcal B^o(\rho)\sqcup \mathcal J]&\cong\ker M(\rho),
\\
\kappa [\mathcal B^c(T_{k,l}(\rho))]&\cong\coker M(T_{k,l}(\rho)),&
\kappa [\mathcal B^c(\rho)\sqcup \mathcal J]&\cong\coker M(\rho),
\end{aligned}
\end{equation*} 
and that the ranks of the linear maps in \eqref{E:iii} and \eqref{E:jjj} coincide with the ranks of the corresponding linear maps in \eqref{E19a} and \eqref{E19}, respectively.
The following refinement of Proposition~\ref{P23} asserts that these isomorphisms may even be chosen to be compatible with truncation.

\begin{proposition}\label{O45}
(a) Let $\rho$ be a $G_{2m}$-representation. Then every decomposition $\rho=\bigoplus_{I\in\mathcal B(\rho)}\rho^G(I)\oplus\bigoplus_{J\in\mathcal J(\rho)}\rho^G(J)$  
induces isomorphisms $\Psi^o$, $\Psi^o_{k,l}$, $\Psi^c$, and $\Psi^c_{k,l}$ such that the diagrams
\begin{equation}\label{E13G}
\vcenter{
\xymatrix{
\ker M(T_{k,l}(\rho))\ar[r]^-{i}     &\ker M(T_{k',l'}(\rho))\ar[r]^-{i'}     &\ker M(\rho)
\\
\kappa[\mathcal B^o(T_{k,l}(\rho))]\ar[u]_\cong^{\Psi^o_{k,l}} \ar[r] &\kappa[\mathcal B^o(T_{k',l'}(\rho))] \ar[u]_\cong^{\Psi^o_{k',l'}}\ar[r] &\kappa[\mathcal B^o(\rho)\sqcup \mathcal J_1(\rho)]\ar[u]_\cong^{\Psi^o}
}} 
\end{equation} 
and
\begin{equation}\label{E14G}
\vcenter{
\xymatrix{
\coker M(T_{k,l}(\rho))\ar[r]^-{j}     &\coker M(T_{k',l'}(\rho))\ar[r]^-{j'}     &\coker M(\rho)
\\
\kappa[\mathcal B^c(T_{k,l}(\rho))]\ar[u]_\cong^{\Psi^c_{k,l}} \ar[r] &\kappa[\mathcal B^c(T_{k',l'}(\rho))] \ar[u]_\cong^{\Psi^c_{k',l'}}\ar[r] &\kappa[\mathcal B^c(\rho)\sqcup \mathcal J_1(\rho)].\ar[u]_\cong^{\Psi^c} 
}} 
\end{equation}
commute for all integers $1\leq k'\leq k\leq l\leq l'\leq m$.
 
(b) Let $\rho$ be a good $\mathcal Z$-representation. Then every decomposition $\rho=\bigoplus_{I\in\mathcal B(\rho)}\rho(I)\oplus\bigoplus_n\rho^{\mathcal Z}_\infty$, 
where $n=\sharp J(\rho)$, induces isomorphisms $\Psi^o$, $\Psi^o_{k,l}$, $\Psi^c$, and $\Psi^c_{k,l}$ such that the diagrams
\begin{equation}\label{E13Z}
\vcenter{
\xymatrix{
\ker M(T_{k,l}(\rho))\ar[r]^-{i}     &\ker M(T_{k',l'}(\rho))\ar[r]^-{i'}     &\ker M(\rho)
\\
\kappa[\mathcal B^o(T_{k,l}(\rho))]\ar[u]_\cong^{\Psi^o_{k,l}} \ar[r] &\kappa[\mathcal B^o(T_{k',l'}(\rho))] \ar[u]_\cong^{\Psi^o_{k',l'}}\ar[r] &\kappa[\mathcal B^o(\rho)]\ar[u]_\cong^{\Psi^o}
}} 
\end{equation} 
and
\begin{equation}\label{E14Z}
\vcenter{
\xymatrix{
\coker M(T_{k,l}(\rho))\ar[r]^-{j}     &\coker M(T_{k',l'}(\rho))\ar[r]^-{j'}     &\coker M(\rho)
\\
\kappa[\mathcal B^c(T_{k,l}(\rho))]\ar[u]_\cong^{\Psi^c_{k,l}} \ar[r] &\kappa[\mathcal B^c(T_{k',l'}(\rho))] \ar[u]_\cong^{\Psi^c_{k',l'}}\ar[r] &\kappa[\mathcal B^c(\rho)\sqcup \mathcal J(\rho)].\ar[u]_\cong^{\Psi^c} 
}} 
\end{equation}
commute for all integers $k'\leq k\leq l\leq l'$.
\end{proposition}

\begin{proof}
Since the involved constructions are all additive, that is to say, compatible with direct sums of representations, it suffices to construct the isomorphisms $\Psi^o$, $\Psi^o_{k,l}$, $\Psi^c$, and $\Psi^c_{k,l}$for indecomposable representations.
For indecomposable representations, however, the constructions are tautological in view of Proposition \ref{P23}. 
\end{proof}

We close this section with an observation about the infinite cyclic covering associated with a $G_{2m}$-representation.
Let $\rho=\{V_r,\alpha_i,\beta_i\}$ be a $G_{2m}$-representation and let $\tilde\rho=\{\tilde V_r,\tilde\alpha_i,\tilde\beta_i\}$ denote the associated infinite cyclic covering $\mathcal Z$-representation, cf.\ the beginning of Section~\ref{SS:basic}.
Recall that the shift by $m$ induces automorphisms denoted by $t$ on $\ker M(\tilde\rho)$ and $\coker M(\tilde\rho)$, see \eqref{EQ8}.
These automorphisms turn $\ker M(\tilde\rho)$ and $\coker M(\tilde\rho)$ into $\kappa[T^{-1},T]$-modules such that $T$ acts by $t$ and $T^{-1}$ acts by $t^{-1}$.
Appendix~\ref{SS10} contains some basic facts on $\kappa[T,T^{-1}]$-modules.

Correspondingly, the translation of intervals, $I\mapsto I+m$, induces bijections on $\mathcal B^o(\tilde\rho)$ and $\mathcal B^c(\tilde\rho)$, see \eqref{E:qwerty1}.
The induced automorphisms on $\kappa[\mathcal B^o(\tilde\rho)]$ and $\kappa[\mathcal B^c(\tilde\rho)]$ turn these two vector spaces into $\kappa[T^{-1},T]$-modules.
Moreover, identifying $\kappa[\mathcal J(\tilde\rho)]=\bigoplus_{(V,T)\in\mathcal J(\rho)}V$, we obtain an automorphism $\bigoplus_{(V,T)\in\mathcal J(\rho)}T$ on $\kappa[\mathcal J(\tilde\rho)]$ which we use to turn this vector space into a $\kappa[T^{-1},T]$-module. 
Via $\kappa[\mathcal B^c(\tilde\rho)\sqcup\mathcal J(\tilde\rho)]=\kappa[\mathcal B^c(\tilde\rho)]\oplus\kappa[\mathcal J(\tilde\rho)]$, we obtain a $\kappa[T^{-1},T]$-module structure on $\kappa[\mathcal B^c(\tilde\rho)\sqcup\mathcal J(\tilde\rho)]$.

\begin{lemma}\label{OOO}
Let $\rho$ be a $G_{2m}$-representation and let $\tilde\rho$ denote the associated infinite cyclic covering $\mathcal Z$-representation.
Then the following hold true:

a) The linear isomorphisms 
$$
\Psi^o\colon\kappa[\mathcal B^o(\tilde\rho)]\to\ker M(\tilde\rho)\quad\text{and}\quad\Psi^c\colon\kappa[\mathcal B^c(\tilde\rho)\sqcup\mathcal J(\tilde\rho)]\to\coker M(\tilde\rho)
$$ 
in Proposition~\ref{O45}(b) 
are isomorphisms of $\kappa[T^{-1},T]$-modules (since by construction compatible with the $m-$periodicity).   

b) The modules $\kappa[\mathcal B^o(\tilde\rho)]$ and $\kappa[\mathcal B^c(\tilde\rho)]$ are free.
More precisely, we have isomorphisms of $\kappa[T^{-1},T]$-modules
$$
\kappa[\mathcal B^o(\tilde\rho)]\cong\kappa[T^{-1},T][\mathcal B^o(\rho)]\quad\text{and}\quad\kappa[\mathcal B^c(\tilde\rho)]\cong\kappa[T^{-1},T][\mathcal B^c(\rho)].
$$

c) The torsion part of the $\kappa[T^{-1},T]$-module $\coker M(\tilde\rho)$ equipped with the automorphism induced by $T$ is isomorphic to the monodromy of the representation $\rho$, that is, $\kappa[\mathcal J(\tilde\rho)]\cong\bigoplus_{J\in\mathcal J(\rho)}J$.
\end{lemma}

\begin{proof}
Since the statement is additive in the $G_{2m}$-representation $\rho$, it suffices to consider indecomposable $G_{2m}$-representations $\rho$.
Part c) follows from Lemma~\ref{OO}. Indeed if $\rho$ is a barcode representation the result follows from Proposition \ref {P23}. In this case both $\ker(M(\tilde \rho))$ and $\coker(M(\tilde \rho))$ are free of rank $1$ or $0.$ and there is no regular part of $\rho.$ If $\rho$  is a Jordan block, hence $\rho$ is regular, the result follows from Lemma \ref{OO}. 
\end{proof}

\section{Bar codes and Jordan blocks via graph representations}\label{SS3}

In this section we will describe graph representations associated with a tame circle valued map.
Furthermore, we will establish fundamental exact sequences that permit to compute the (twisted) homology of the underlying space in terms of the corresponding barcodes and Jordan blocks.

Let $f\colon X\to S^1$ be a tame map and $0<\theta_1<\theta_2<\cdots<\theta_m\leq 2\pi$ be the critical angles (the angles of the set $\Sigma$ in the definition of tameness). 
Choose the regular values $t_1<t_2<\cdots<t_m$ with $\theta_{i-1}<t_i<\theta_i$ and $0<t_1<\theta_1$.  
In order to differentiate between regular and singular fibers we write $R_i:=f^{-1}(t_i)$ and $X_i:= f^{-1}(\theta_i)$.

The tameness of $f$ induces the maps $a_i\colon R_i\to X_i$ for $1\leq i\leq m$,  $b_i\colon R_{i+1}\to X_i$ for $i\leq m-1$ and $b_m\colon R_1\to X_m$ which are unique up to homotopy; 
this means that different choices of the regular values, say $t'_i$ instead of $t_i$, lead to homotopy equivalences $\omega_i\colon R_i\to R'_i$ s.t.\ 
$a'_i\cdot \omega_i$ is homotopic to $a_i$ and $b'_i\cdot \omega_i$ is homotopic to $b_i$.

Indeed the fiber $R_i$ identifies up to homotopy to regular fibers $f^{-1}(t)$ and $f^{-1}(t')$, $\theta_{i-1}< t <t' <\theta_i$ since $f^{-1}(\theta_{i-1},\theta_i)\to (\theta_{i-1},\theta_i)$ is a fibration. 
One chooses $t$ and $t'$ to make sure that $f^{-1}(t)$ and $f^{-1}(t')$ are contained in open sets which retract to $X_i$ resp.\ $X_{i-1}$. 
The maps $b_{i-1}$ and $a_i$ are the composition of such identifications with the retractions to $X_{i-1}$ resp.\ $X_{i}$.  
We leave the reader to do the tedious verification that the homotopy classes of $a_i$ and $b_{i-1}$ are independent of the choices made.

Passing to $r$-homology one obtains the $G_{2m}$-representation $\rho_r=\rho_r(f)$ whose vector spaces are $V_{2s}=H_r(X_s)$ and $V_{2s-1}= H_r(R_s)$ 
and the linear maps $\alpha_i$ and $\beta_i$ are induced by the continuous maps $a_i$ and $b_i$.

The representation $\rho_r(f)$ has  bar codes whose ends are $i,j+km$, $1\leq i,j \leq m$. 
Denote by $\mathcal B_r(f)$, the collections of intervals defined by the bar codes of $\rho_r(f)$ but with the ends $i$ and $j+km$ replaced by $\theta_i$ and $\theta_j+2\pi k$.  
Denote by $\mathcal J_r(f)$ the collection of Jordan blocks of the representation $\rho_r(f)$.  

If $\tilde f\colon\tilde X \to \mathbb R$ is the infinite cyclic covering of $f$ then the real numbers $\theta_i+2\pi k$ are the critical values and $t_i+2\pi k$ are regular values 
(between consecutive critical values) and the tameness of $\tilde f$ gives the maps $a_{i+km}\colon\tilde X_{t_{i+1}+2\pi k}\to\tilde X_{\theta_{i}+2\pi k}$ 
and $b_{i+km}\colon\tilde X_{t_i+2\pi k}\to \tilde X_{\theta_i+2\pi k}$. 
By passing to homology in dimension $r$ one obtains a good $\mathcal Z$-representation $\rho_r(\tilde f)$ which is exactly the infinite cyclic covering $\widetilde {\rho_r(f)}$.

The collections $\mathcal B_r(\tilde f)$, $\mathcal B^c_r(\tilde f)$, $\mathcal B^o_r(\tilde f)$, $\mathcal B^{co}_r(\tilde f)$, $\mathcal B^{oc}_r(\tilde f)$ also denoted by $\tilde {\mathcal B}_r(f)$, $\tilde {\mathcal B}^c_r(f)$, $\tilde {\mathcal B}^o_r(f)$, $\tilde {\mathcal B}^{co}_r( f)$, $\tilde {\mathcal B}^{oc}_r(f)$ are the bar codes of the representation $\widetilde {\rho_r(f)}.$  They are invariant w.r.\ 
to the $2\pi$ translation and the collections $\mathcal B_r(f)$, $\mathcal B^c_r(f)$, $\mathcal B^o_r(f)$, $\mathcal B^{co}_r(f)$, $\mathcal B^{oc}_r(f)$ can be viewed as 
equivalence classes (modulo the $2\pi$ translation) of elements of $\mathcal B^c_r(\tilde f)$, $\mathcal B^o_r(\tilde f)$, $\mathcal B^{co}_r(\tilde f)$, $\mathcal B^{oc}_r(\tilde f)$.   For $X$ compact and $f$ tame the sets $\mathcal B_r(f)$ are finite while $\tilde {\mathcal B}_r(f)$, if nonempty, are infinite.

Given $\xi\in H^1(X;\mathbb Z)$ and $u\in \kappa\setminus 0$, the pair $(\xi,u)$ denotes  the rank one representation $H_1(X;\mathbb Z)\to \mathbb Z\to \kappa\setminus 0$, 
where the first arrow is given by $\xi$ and the second by the homomorphism  $\langle u\rangle\colon\mathbb Z\to\kappa\setminus 0$ defined by $\langle u\rangle(n)=u^n$.
One denotes by $H_r(X;(\xi,u))$ the homology of $X$ with coefficients in the local system defined by the representation $(\xi,u)$, see \cite[Section~3.H]{H02}.
To describe the latter homology group, recall that the singular chain complex of the infinite cyclic covering, $C_*(\tilde X)$, can be regarded as a chain complex of $\kappa[T^{-1},T]$-modules where the action of $T$ is induced by the fundamental deck transformation.
The homology $H_r(X;(\xi,u))$ is canonically isomorphic to the $r$-th homology of the $\kappa$-cochain complex $C_*(\tilde X)\otimes_u\kappa$ obtained by tensorizing with the representation $\kappa[T^{-1},T]\to\kappa$ determined by $T\mapsto u$.
If $u=1$, then we have a canonical isomorphism $C_*(\tilde X)\otimes_u\kappa=C_*(X)$ and thus $H_r(X;(\xi, 1))=H_r(X)$.

Replacing homology by homology with coefficients in the local system $(\xi,u)$ leads also to the replacement of the representations $\rho_f(f)$ with the representations $\rho_r(f)_u.$ as explained below. 
Since the local system becomes trivial over $R_i$ and $X_i$, we have isomorphisms $H_r(R_i;(\xi,u))\cong H_r(R_i)$ and $H_r(X_i;(\xi,u))\cong H_r(X_i)$.
The maps induced by $a_i$ and $b_i$, however, will not all coincide with the maps in the representation $\rho_r(f)$ but with the ones for $\rho_r(f)_u.$
More precisely, every trivialization of the infinite cyclic covering over $[\theta_1,t_1+2\pi]$, induces isomorphisms $\phi_i$ and $\phi_i'$, $1\leq i\leq m$,
such that the diagram
$$
\xymatrix{
H_r(X_i;(\xi,u))&H_r(R_{i+1};(\xi,u))\ar[r]^-{(a_{i+1})_*}\ar[l]_-{(b_i)_*}&H_r(X_{i+1};(\xi,u))
\\
H_r(X_i)\ar[u]^-{\phi_i}_-\cong&H_r(R_{i+1})\ar[u]^-{\phi'_{i+1}}_-\cong\ar[l]_-{\beta_i}\ar[r]^-{\alpha_{i+1}}&H_r(X_{i+1})\ar[u]^-{\phi_{i+1}}_-\cong
}
$$
commutes for all $1\leq i<m$, and the diagram
$$
\xymatrix{
H_r(X_m;(\xi,u))&H_r(R_1;(\xi,u))\ar[r]^-{(a_1)_*}\ar[l]_-{(b_m)_*}&H_r(X_1;(\xi,u))
\\
H_r(X_m)\ar[u]^-{\phi_m}_-\cong&H_r(R_1)\ar[u]^-{\phi_1'}_-\cong\ar[l]_-{\beta_m}\ar[r]^-{u\alpha_1}&H_r(X_1)\ar[u]^-{\phi_1}_-\cong
}
$$
commutes.
The $G_{2m}$-representation obtained by using homology with coefficients in $(\xi,u)$ will thus be isomorphic to $(\rho_r(f))_u$, see Section~\ref{SS:rhou}.

\subsection{The relevant exact sequences, cf.\ \cite{BD11}} 

The tool which permits the calculation of the homology of $X$, $\tilde X$ and various pieces of these spaces is provided by Proposition~\ref{P31} below.
The sequence in \eqref{EE13} has been established in \cite[Section~5]{BD11}.

\begin{proposition}\label{P31}
Let $f\colon X\to \mathbb S^1$ be a tame map and $\tilde f\colon\tilde X\to \mathbb R$ its infinite cyclic covering.  
Let $\rho_r= \rho_r(f)$ and $\tilde \rho_r=\rho_r(\tilde f)= \widetilde\rho_r(f)$ be the representations associated with $f$ and $\tilde f$.
One has the following short exact sequences
\begin{equation}\label{EE16}
0\to\coker M((\rho_r)_u)\to H_r(X; (\xi_f,u))\to\ker M((\rho_{r-1})_u)\to0,
\end{equation}
which for $u=1$ becomes 
\begin{equation}\label{EE13}
0\to\coker M(\rho_r)\to H_r(X)\to\ker M(\rho_{r-1})\to0.
\end{equation}
Moreover, one has a short exact sequence of $\kappa[T^{-1},T]$-modules
\begin{equation}\label{F1}
0\to\coker M(\tilde \rho_r)\to H_r(\tilde X)\to\ker M(\tilde \rho_{r-1})\to0.
\end{equation}
These sequences are all compatible with truncations as explained below, see Diagrams~\eqref{DE1} and \eqref{DE30}.
\end{proposition}

Recall that the $\kappa[T^{-1},T]$-module structure on $H_r(\tilde X)$ is induced by the fundamental deck transformation.
The $\kappa[T^{-1},T]$-module structures on $\ker(\tilde\rho_r)$ and $\coker(\tilde\rho_r)$ have been described at the end of Section~\ref{S2}.


Observe that for $\theta_i\leq \theta_j$ critical angles of $f$, 
if $f_{[\theta_i, \theta_j]}$ denotes the restriction of $f$ to $X_{[\theta_i, \theta_j]}=f^{-1}[\theta_i, \theta_j]$, then 
\begin{equation*}
\rho_r(f_{[\theta_i, \theta_j]})=T_{i,j}(\rho_r(f)). 
\end{equation*}
Similarly, for $c_i\leq c_j$ critical values of $\tilde f$, if $\tilde f_{[c_i,c_j]}$ denotes the restriction of $\tilde f$ to $\tilde X_{[c_i, c_j]}=\tilde f^{-1}[c_i, c_j],$ then 
\begin{equation*}
\rho_r(\tilde f_{[c_i, c_j]})= T_{i,j}(\tilde \rho_r(f)). 
\end{equation*}
Since $f$ and therefore $\tilde f$ is tame one also has: 

\noindent for any $\theta'$ with $\theta_{i-1}< \theta'\leq \theta_i$ and  $\theta''$ with $\theta_j\leq  \theta'' < \theta_{j+1}$  
$$\rho_r(f_{[\theta', \theta'']})= \rho_r(f_{[\theta_i, \theta_j]})$$
and for any $c'$ with $c_{i-1}< c'\leq c_i$ and  $c''$ with $c_j\leq  c'' < c_{j+1}$ 
$$\rho_r(f_{[c', c'']})= \rho_r(f_{[c_i, c_j]}).$$

In the case of the $G_{2m}$-representation $\rho_r(f)$ ``compatibility with truncation'' means that for any pairs of critical angles $(\theta_i,\theta_j)$ and $(\theta_{i'}, \theta_{j'})$, 
$0<\theta_i\leq \theta_{i'}\leq\theta_{j'}\leq\theta_j\leq 2\pi$ the following diagram is commutative:
\begin{equation}\label{DE1}
\xymatrix{
0\ar[r] & \coker M(T_{i',j'}( \rho_{r}))\ar[d]^{v_l} \ar[r] &H_r( X_{[\theta_{i'},\theta_{j'}]}) \ar[r]^-{\pi'}\ar[d]^v &\ker M(T_{i', j'}( \rho_{r-1}))\ar[r]\ar[d]^{v_r}& 0\\
0\ar[r] & \coker M(T_{i,j}( \rho_{r}))\ar[d]^{v'_l}\ar[r] &H_r( X_{[\theta_i,\theta_j]}) \ar[r]^-{\pi''} \ar[d]^{v'}&\ker M(T_{i,j}( \rho_{r-1}))\ar[r]\ar[d]^{v'_r}& 0\\
0\ar[r] & \coker M( (\rho_{r})_u)\ar[r] &H_r(X; (\xi_f,u)) \ar[r]^-\pi &\ker M(( \rho_{r-1})_u)\ar[r]& 0}
\end{equation}
In the case of the $\mathcal Z$-representation $\tilde\rho_r$, this means that for any pairs of critical values $(c_i, c_j)$ and $(c_{i'}, c_{j'})$ with $c_i\leq c_{i'}\leq c_{j'}\leq c_j$ the following diagram is commutative:
\begin{equation}\label{DE30}
\xymatrix{
0\ar[r] & \coker M(T_{{i'}, {j'}}(\tilde \rho_{r}))\ar[d]^{v_l} \ar[r] &H_r( \tilde X_{[c_{i'}, c_{j'}]}) \ar[r]^-{\pi'}\ar[d]^v &\ker M(T_{i', j'}( \tilde \rho_{r-1}))\ar[r]\ar[d]^{v_r}& 0\\
0\ar[r] & \coker M(T_{i, j}( \tilde \rho_{r}))\ar[d]^{v'_l}\ar[r] &H_r( \tilde X_{[c_i, c_j]}) \ar[r]^-{\pi''} \ar[d]^{v'}&\ker M(T_{i, j}( \tilde\rho_{r-1}))\ar[r]\ar[d]^{v'_r}& 0\\
0\ar[r] & \coker M (\tilde \rho_{r})\ar[r] &H_r(\tilde X) \ar[r]^-\pi &\ker M(\tilde \rho_{r-1})\ar[r]& 0.
}
\end{equation}
Note that diagrams (\ref {DE1}) and (\ref{DE30}) implies that any splitting (= right inverse) of $\pi'$ or $\pi''$ extend to a splitting of $\pi''$ or $\pi$ respectively.

\begin{proof}   
The reader should recognize in the matrices $M(T_{i, j}( \tilde \rho_{r})),$ $M( (\rho_{r})_u)$ and $M (\tilde \rho_{r})$ the linear maps induced from 
$\bigoplus_{i\leq k\leq j} H_r(R_k)$ to  $H_r( X_{[\theta_i,\theta_j]}),$   from $\bigoplus_{1\leq i\leq m} H_r(R_i)$ to $H_r(X; (\xi_f,u))$ and from $\bigoplus_{1\leq i\leq m} H_r(R_i)$ to $H_r(\tilde X)$ respectively.

Denote by $\mathcal R:=\bigsqcup_{1\leq i\leq m}R_i$, $\tilde{\mathcal R}:=\bigsqcup _{i \in \mathbb Z} R_i$, $\mathcal X:=\bigsqcup _{1\leq i\leq m} X_i$
and $\tilde{\mathcal X}:=\bigsqcup _{ i\in \mathbb Z} X_i$.
The short exact sequence \eqref{EE16} follows from the long exact sequence
\begin{equation}\label{MV1}
\to H_r(\mathcal R)\xrightarrow{M((\rho_r)_u)}H_r(\mathcal X)\to H_r(X;(\xi,u))\to H_{r-1}(\mathcal R)\xrightarrow{M((\rho_{r-1})_u)}H_{r-1}(\mathcal X)\to
\end{equation}
with $H_r(\mathcal R)= \bigoplus_{1\leq i\leq m} H_r(R_i)$ and $H_r(\mathcal X)= \bigoplus_{1\leq i\leq m} H_r(X_i)$,
and the short exact sequence \eqref{F1} follows from the long exact sequence 
\begin{equation}\label{MV2}
\cdots\to H_r(\tilde {\mathcal R})\xrightarrow{M(\rho_r)}H_r(\tilde {\mathcal X})\to H_r(\tilde X)\to H_{r-1}(\tilde {\mathcal R})\xrightarrow{M(\rho_{r-1})}H_{r-1}(\tilde{\mathcal X})\to\cdots.
\end{equation}
which remain to be established.
The sequence in \eqref{EE13} appears as a special case of the sequence in \eqref{EE16} for $u=1$.

Since both long exact sequences \eqref{MV1} and \eqref{MV2} are derived in the same way we will treat only \eqref{MV1} and for simplicity only the case $u=1$. 

First choose an $\epsilon>0$ small enough so that $2\epsilon<t_1$ and $\theta_{i-1}+2\epsilon<t_i<\theta_i-2\epsilon$.
To simplify the writing, since $i\leq m$, introduce $\theta_{m+1}=\theta_1+2\pi$, let 
$$
f^{-1}\bigl([\theta_m\pm\epsilon,\theta_{m+1}\pm \epsilon)\bigr):=\tilde f^{-1}\bigl([\theta_m\pm\epsilon,\theta_1+2\pi\pm\epsilon]\bigr),
$$ 
and define 
$$
\mathcal P':=\bigsqcup_{1\leq i\leq m}f^{-1}\bigl([\theta_i, \theta_{i+1}-\epsilon)\bigr),
\qquad
\mathcal P'':=\bigsqcup_{1\leq i\leq m}f^{-1}\bigl((\theta_i+\epsilon, \theta_{i+1}]\bigr).
$$
Observe that in view of the choice of $\epsilon$ and of the tameness of $f$ the inclusions 
$\mathcal X \subset \mathcal P'$,
$\mathcal X \subset \mathcal P''$, and
$\mathcal X \sqcup \mathcal R\subset \mathcal P'\cap \mathcal P''$
are homotopy equivalences.  
The Mayer--Vietoris long exact sequence for $X=\mathcal P'\cup \mathcal P''$ gives the commutative diagram 
\begin{equation}\label{EE23}
\xymatrix{
&                                                       & H_r(\mathcal R)\ar[r]^{M(\rho_r(f))}                                                  & H_r(\mathcal X)\ar@/^/[rd]                                               &&\\
\ar[r]&H_{r+1}(X)\ar@/^/[ur]
\ar[r]^-{\partial_{r+1}}&H_r(\mathcal R)\oplus H_r(\mathcal X)\ar[u]^{\operatorname{pr}_1}\ar[r]^N  &H_r(\mathcal X)\oplus H_r(\mathcal X)\ar[u]^{(\operatorname{Id},-\operatorname{Id})}{\ar[r]^-{(i_r, -i_r)}} &H_r(X)\ar[r] &\\
&                                                       &H_r(\mathcal X)\ar[u]^{\operatorname{in}_2}\ar[r]^{\rm Id}                                           &H_r(\mathcal X)\ar[u]^{\Delta}                                                 &&
}
\end{equation}
where $\Delta$ denotes the diagonal, $\operatorname{in}_2$ the inclusion on the second component, $\operatorname{pr}_1$ the projection on the first component, 
$i_r$ the linear map induced in homology by the inclusion $\mathcal X\subset X$. 
Recall that the matrix $M(\rho_r(f))$ is defined by 
\begin{equation*}
\begin{pmatrix}
\alpha^r_1 & -\beta^r_1 & 0          & \cdots         & 0\\
0          & \alpha^r_2 & -\beta^r_2 & \ddots         & \vdots\\
\vdots     & \ddots     & \ddots     & \ddots         & 0 \vspace{0.5ex}\\
0          & \cdots     & 0          & \alpha^r_{m-1} & -\beta^r_{m-1}\vspace{1ex}\\
-\beta^r_m & 0          & \cdots     & 0              & \alpha^r_m
\end{pmatrix} 
\end{equation*}
with $\alpha^r_i\colon H_r(R_i)\to H_r(X_i)$ and $\beta^r_i\colon H_r(R_{i+1})\to H_r(X_i)$ induced by the maps $a_i$ and $b_i$.  
The block matrix $N$ is defined by 
\begin{equation*}
N:=
\begin{pmatrix}
\alpha^r & \Id\\
-\beta^r & \Id
\end{pmatrix}
\end{equation*}
where $\alpha^r$ and $\beta^r$ are the matrices 
$$
\begin{pmatrix}
\alpha^r_1 & 0          & \cdots &    0 \\
0          & \alpha^r_2 & \ddots &\vdots \\
\vdots     & \ddots     & \ddots &0 \\
0          & \cdots     & 0      & \alpha^r_{m-1} 
\end{pmatrix}
\quad\text{and}\quad
\begin{pmatrix}
0         & \beta^r_1 & 0         & \dots  & 0 \\
0         & 0         & \beta^r_2 & \ddots & \vdots \\
\vdots    & \vdots    & \ddots    & \ddots & 0 \\
0         & 0         & \dots     & 0      & \beta^r_{m-1}\\
\beta^r_m & 0         & \dots     & 0      & 0
\end{pmatrix}.
$$
The long exact sequence \eqref{MV1} is the top sequence in the diagram \eqref{EE23}.

The long exact sequence \eqref{MV2} can be established analogously.
The naturality of the Mayer--Vietoris sequence w.r. to maps which preserve the {\it decomposition of a space in two pieces}  implies that the homomorphisms in the sequence \eqref{MV2} intertwine the automorphisms induced by the fundamental deck transformation on $H_r(\tilde{\mathcal R})$, $H_r(\tilde{\mathcal X})$, and $H_r(\tilde X)$, respectively.
Hence, \eqref{F1} is a short exact sequence of $\kappa[T^{-1},T]$-modules.
Compatibility with truncations follows from the naturality of the Mayer--Vietoris sequence too.
\qed
\end{proof}

\section{Proof of Theorem~\ref{T1} and some refinements.}\label{S4}

Let $f\colon X\to\mathbb S^1$ be a tame map on a compact ANR, and let $\xi=\xi_f\in H^1(X;\mathbb Z)$ denote the corresponding integral cohomology class.
Moreover, let $\pi\colon\tilde X\to X$ denote the associated infinite cyclic covering, that is, the pull back by $f$ of the universal covering $p\colon\mathbb R\to\mathbb S^1$.
There exists a tame map $\tilde f\colon\tilde X\to\mathbb R$ which is equivariant with respect to the (principal) $\mathbb Z$-actions on $\tilde X$ and $\mathbb R$ such that the
following diagram commutes:
$$
\xymatrix{
\tilde X\ar[d]_\pi\ar[r]^-{\tilde f}&\R\ar[d]^p \\X\ar[r]^-f&S^1.
}
$$

Recall that the vector space $H_r(\tilde X)$ is a $\kappa[T^{-1},T]$-module\footnote{$\kappa[T^{-1},T]$ denotes the ring of Laurent polynomials with coefficients in $\kappa$, i.e., the group algebra $\kappa[\mathbb Z]$.} 
where the multiplication by $T$ is the linear isomorphism induced by the fundamental deck transformation $\tau\colon\tilde X\to \tilde X$ corresponding to the action of $1\in\mathbb Z$.

Let $\kappa[T^{-1},T]]$ be the field of Laurent power series and define 
$$
H^N_r(X;\xi):=H_r(\tilde X)\otimes_{\kappa[T^{-1}, T]} \kappa[T^{-1}, T]].
$$ 
The $\kappa[T^{-1},T]]$-vector spaces $H^N_r(X;\xi)$ is called the $r$-th Novikov homology\footnote{Instead of $\kappa[T^{-1},T]]$ one can consider the field 
$\kappa[[T^{-1},T]$ of Laurent power series in $T^{-1}$, which is isomorphic to $\kappa[T^{-1},T]]$ by an isomorphism induced by $T\to T^{-1}$.  
The (Novikov) homology defined using this field has the same Novikov--Betti numbers as the one defined using $\kappa[T^{-1}, T]]$.}
and its dimension over the field $\kappa[T^{-1},T]]$, the \emph{Novikov--Betti number} $\beta^N_r(X;\xi)$.

Consider $H_r(\tilde X)\to H^N_r(X;\xi)$ the $\kappa[T^{-1},T]$-linear map induced by taking the tensor product 
with $\kappa[T^{-1},T]]$ over $\kappa[T^{-1},T]$. 
The $\kappa[T^{-1},T]$-module $V_r(\xi)$, 
$$
V_r(\xi):=\ker\bigl(H_r(\tilde X)\to H^N_r(X;\xi)\bigr),
$$
when regarded as a $\kappa$-vector space equipped with the linear isomorphism $T_r(\xi)$ provided by the multiplication by $T$, is referred to as the \emph{$r$-monodromy} of $(X,\xi)$. 
As a $\kappa[T^{-1},T]$-module $V_r(\xi)$ is exactly the torsion of the $\kappa[T^{-1},T]$-module $H_r(\tilde X)$.

A base for $V_r(\xi)$ provides a parametrization of the abstract set $\tilde {\mathcal J}_r(f)$ and therefore an identification of $V_r(\xi)$ to $\kappa [\tilde {\mathcal J}_r(f)]$.
We continue to call ``monodromy'' and denote by $T_r(\xi)$ the isomorphism $T_r(\xi)\colon\kappa[\tilde {\mathcal J}_r(f)]\to \kappa[ \tilde {\mathcal J}_r(f)]$ obtained by using  the above identification.

\begin{proof}[Proof of Theorem \ref{T1}]
Part~\itemref{T1:c} follows from Propositions~\ref{P31} and \ref{O45} which relate the Betti numbers to the bar codes and Jordan cells via the short exact sequence \eqref{EE13}.
Similarly one can compute $\dim H_r(X,(\xi_f,u))$ in terms of barcodes and Jordan cells using the short exact sequence~\eqref{EE16}.

To check parts \itemref {T1:a} and \itemref{T1:b} recall that according to Lemma~\ref{OOO} we have isomorphisms of $\kappa[T^{-1},T]$-modules:
\begin{align*}
\coker M(\tilde\rho_r(f))&\cong\kappa[T^{-1},T][\mathcal B^c_r(f)]\oplus\kappa[\tilde{\mathcal J}_r(f)],
\\
\ker M(\tilde\rho_{r-1}(f))&\cong\kappa[T^{-1},T][\mathcal B^o_r(f)].
\end{align*}
In particular, the $\kappa[T^{-1},T]$-module $\ker M(\tilde\rho_{r-1}(f))$ is free.
Hence, the short exact sequence~\eqref{F1} splits and we obtain an isomorphism of $\kappa[T^{-1},T]$-modules,
$$
H_r(\tilde X)\cong\kappa[T^{-1},T][\mathcal B^c_r(f)\sqcup\mathcal B^o_{r-1}(f)]\oplus\kappa[\tilde{\mathcal J}_r(f)].
$$
Tensorizing with $\kappa[T^{-1},T]]$, we obtain an isomorphisms of $\kappa[T^{-1},T]]$-vector spaces,
$$
H^N_r(X;\xi_f)\cong\kappa[T^{-1},T]][\mathcal B^c_r(f)\sqcup\mathcal B^o_{r-1}(f)].
$$
We conclude $\beta^N_r(X;\xi_f)=\sharp\mathcal B^c_r(f)+\sharp\mathcal B^o_{r-1}(f)$, whence part (a). 
Moreover,
$$
V_r(\xi_f)\cong\kappa[\tilde{\mathcal J}_r(f)]\cong\bigoplus_{J\in\mathcal J_r(f)}J,
$$
whence part (b).
\qed
\end{proof}

{\bf More calculations}

\vskip.1in
A nonempty subset $K$ of $\mathbb S^1$ or $\mathbb R$, will be called a \emph{closed multi-interval} if it is a finite union of disjoint closed intervals 
$[\theta_1,\theta_2]$ with $0\leq \theta_1 \leq \theta_2 <2\pi$ in the case of $\mathbb S^1$, and $[a,b]$ with $ a\leq b $ 
or $(-\infty, a]$ or $[b,\infty)$ 
in the case of $\mathbb R$. 
One denotes by $X_{K}:=f^{-1}(K)$ if $K\subset \mathbb S^1$ and by $\tilde X_{K}=f^{-1}(K)$ if $K\subset \mathbb R$.  

In case  $K\subset \mathbb S^1$ one considers
\begin{align*}
\mathcal B^c_{r,K}(f)&=\{I\in \mathcal B^c_r(f)  \mid I\cap K\ne \emptyset\},
\\
\mathcal B^o_{r,K}(f)&=\{I\in \mathcal B^o_{r}(f)  \mid I\subset K\},
\end{align*}
and for $u\in \kappa\setminus 0$ the sets 
\begin{align*}
S_{r, u} (f)&= \mathcal B^c_{r}(f)\sqcup  \mathcal B^o_{r-1}(f)\sqcup \mathcal J_{r,u}(f)\sqcup \mathcal J_{r-1,u}(f),\\
S_{r, K, u} (f)&= \mathcal B^c_{r,K}(f)\sqcup  \mathcal B^o_{r-1,K}(f)\sqcup \mathcal J_{r,u}(f).
\end{align*}
Recall that $\mathcal J_{r,u}(f)$ denotes the set of Jordan blocks $J= (V,T)\in \mathcal J_r(f)$ whose linear isomorphism $T$ has $u$ as eigenvalue. Since $u\in \kappa\setminus 0,$ $J$ is actually a Jordan cell.

In case $K\subset\mathbb R$ an  one considers the sets
\begin{align*}
\widetilde{\mathcal B}_{r}(f)&=\{I\in {\mathcal B}_r(\tilde f) \},
\\
\widetilde{\mathcal B}^c_{r,K}(f)&=\{I\in {\mathcal B}^c_r(\tilde f)  \mid I\cap K\ne \emptyset\},
\\
\widetilde{\mathcal B}^o_{r,K}(f)&=\{I\in {\mathcal B}^o_{r}(\tilde f)  \mid I\subset K\},
\end{align*}
and 
\begin{equation}\label {E23}
\begin{aligned} 
\widetilde S_{r, K} (f)&= \widetilde {\mathcal B}^c_{r,K}(f)\sqcup  \widetilde{\mathcal B}^o_{r-1,K}(f)\sqcup \widetilde{\mathcal J}_r(f),
\\
\widetilde S_{r} (f)&= \widetilde {\mathcal B}^c_{r}(f)\sqcup  \widetilde{\mathcal B}^o_{r-1}(f)\sqcup \widetilde{\mathcal J}_{r}(f). 
\end{aligned}
\end{equation}

These sets have the following properties: 
\begin{enumerate}[(i)]
\item 
If $K_1, K_2, K$ are closed multi-intervals in $\mathbb S^1$ or $\mathbb R$ with $K_1\cap K_2=\emptyset$ and $K= K_1\cup K_2$ then 
$S_{r, K,u}= S_{r, K_1,u} \cup S_{r, K_2,u}$ and $\tilde S_{r, K}= \tilde S_{r, K_1} \cup \tilde S_{r, K_2}$.
\item 
If $K_1, K_2, K$ are closed multi-intervals in $\mathbb S^1$ or $\mathbb R$ with $K_1\cap K_2=K$  then 
$S_{r, K,u}= S_{r, K_1,u} \cap S_{r, K_2,u}$ and $\tilde S_{r, K}= \tilde S_{r, K_1} \cap \tilde S_{r, K_2}$.
\item 
If $K_1, K_2 $ closed multi-intervals with $K_1\subset K_2$ then 
$S_{r, K_1,u} \subseteq  S_{r, K_2,u}$ and $ \tilde S_{r, K_1} \subseteq \tilde S_{r, K_2}$.
\end{enumerate}

For $K$ a multi-interval in $\mathbb S^1$ or $\mathbb R$ denote by: 
\begin{align*}
\mathbb I_r(f; K, u)&:= \img\bigl(H_r(X_{K}) \to H_r(X; (\xi, u)\bigr),
\\
\mathbb I_r(\tilde f; K)&:= \img\bigl(H_r(\tilde X_{K}) \to H_r(\tilde X)\bigr).  
\end{align*}

Let $f\colon X\to \mathbb S^1$ be a tame map with $m$ critical values $0<\theta_1 <\theta_2 \cdots <\theta_m\leq 2\pi$  and $\tilde f:\tilde X\to \mathbb R$ an infinite cyclic cover of $f,$ with critical values $c_i,  i \in \mathbb Z$ with $p(c_{i+mk})= \theta_i.$ 

Recall that for a surjective linear map $\pi: A\to B$ a linear map $s\colon B\to A$ such that $\pi\cdot s=id$ is called a splitting. 

For the projections 
\begin{equation}
\begin{aligned}
H_r(\tilde X)\to &\ker M(\tilde \rho_{r-1}(f))\\
H_r(\tilde X_{[c_i, c_j]})\to &\ker M(T_{i,j}(\tilde \rho_{r-1}(f)))
\end{aligned}
\end{equation}
one  considers collections of splittings  
\begin{equation}
\begin{aligned}
\tilde s_r\colon &\ker M(\tilde \rho_{r-1}(f)) \to H_r(\tilde X)\\
\tilde s_{r; i, j}\colon &\ker M(T_{i,j}(\tilde \rho_{r-1}(f)))\to H_r(\tilde X_{[c_i, c_j]})
\end{aligned}
\end{equation}
%
%
such that the diagram
\begin{equation}
\xymatrix{ker M(T_{i',j'}(\tilde \rho_{r-1}(f)))\ar[r]^-{\tilde s_{r-1;i',j'}}\ar[d] & H_r(\tilde X_{[c_{i'}, c_{j'}]})\ar[d]\\
\ker M(T_{i,j}(\tilde \rho_{r-1}(f)))\ar[r]^-{\tilde s_{r-1;i,j}}\ar[d] & H_r(\tilde X_{[c_i, c_j]})\ar[d]\\
\ker M(\tilde \rho_{r-1}(f))\ar[r]^-{\tilde s_{r-1}} & H_r(\tilde X)}
\end{equation}
commutes, for all $-\infty<i' \leq i \leq j \leq j'<\infty$.

A collection of splittings as above is called a \emph{collection of compatible splittings}.
In view of the fact that the splitting $\tilde s_{r-1; i,j}$ and $\tilde s_{r-1; j,k}$  can be extended to a splitting $\tilde s_{r-1; i,k}$, the existence of collections of compatible splittings is straightforward.  
The construction being realized inductively from $(i,j)$ to $(i,j+1)$ and from $(i,j)$ to $(i-1,j)$.
Moreover one can produce collections of compatible splittings which are $m$-periodic, which means that $\tilde s_r$ intertwines the isomorphism $t_r$ with $\tau _r$.
In other words, $\tilde s_r$ may be assumed to be a homomorphism of $\kappa[T^{-1},T]$-modules

Precisely, we consider the surjective maps $\pi_{r,i,j}: H_r(\tilde X_{[c_i, c_j]}) \to \kappa [\mathcal B^o(T_{i,j}(\tilde \rho_{r-1})]$, 
the composition of 
$H_r(X_{[c_i, c_j]}) \to \ker (M(T_{i,j} (\tilde \rho_{r-1}))$ with  the isomorphism $(\Psi^o_{i,j})^{-1} : \kappa[\mathcal B^o(T_{i,j}(\tilde \rho_{r-1})]\to \ker (M(T_{i,j} (\tilde \rho_{r-1})).$ Here $\tilde \rho_{r-1}$ abbreviates $\tilde \rho_{r-1}(f).$
For any bar code $I\in B^o(\tilde \rho_{r-1})$ with ends $c_i$ and $c_j$ call {\it lift of $I$} an element $v_I\in H_r(\tilde X_{[c_i, c_j]})$ s.t. $\pi_{r, i,j}(v_I)= I.$ 

It is straightforward to provide a family of lifts $v_I$ for any $I\in \mathcal B^o(\rho_{r-1})$  s.t. 
in view of surjectivity of $\pi_{r,i,j}$ all $v_I$ with $I$ of the same ends  linearly independent and in view of the isomorphism $t^\ast: H_r(\tilde X_{[c_i, c_j]})\to H_r(\tilde X_{[c_{i+m}, c_{j+m}]}),$ $c_{i+m}= c_i +2\pi,$ with $v_I$ satisfying  $v_{I+2\pi}= t^\ast(v_I).$
Define  
$\overline s_{r-1,i,j}: \kappa[ \mathcal B^o(T_{i,j}(\tilde \rho_{r-1})\to H_r(\tilde X_{[c_i, c_j]}$ and $\overline s_{r-1}: \kappa[ \mathcal B^o(\tilde \rho_{r-1})\to H_r(\tilde X)$ by taking 
$\overline s_{r-1, i,j} (I)$ to be  the image of $v_I$ in $H_r(\tilde X_{[c_i, c_j]}$
and $\tilde s_{r-1} (\Psi^o(I)$ to be  the image of $v_I$ in $H_r(\tilde X).$
Define $\tilde s_{r-1,i,j}= \overline s_{r-1,i,j}\cdot \Psi^o_{i,j}$ and $\tilde s_{r-1}= \overline s_{r-1}\cdot \Psi^o.$

A similar definition can be obtained by replacing $\tilde\rho_{r-1}(f)$ and $H_r(\tilde X)$ by $\rho_{r-1}(f)_u$ and $H_r(X;(\xi_f,u))$.

With the notations and the definitions above we have the following technical results which calculate $\mathbb I_r(f; K, u)$ and $\mathbb I_r(\tilde f; K)$  as well as homologies of $H_r(X,(\xi_f,u)),$ $H_r(\tilde X)$  already described.

\begin{proposition}\label{P35}\
Let $f\colon X\to \mathbb S^1$ be a tame map and suppose that for each $r$ a decomposition of the representation $\rho_r(f)$ as a sum of bar code representations and Jordan block and a collection of compatible splittings is provided. 
Then:

a) For $u\in\kappa\setminus 0$ the decompositions and  the  collections of compatible splittings  provide the 
isomorphisms 
\begin{equation*}
\omega_{r,u}\colon\kappa[S_{r,u}(f)] \to H_r(X; (\xi_f ,u))
\end{equation*}
and for any  closed multi interval $K\subset \mathbb S^1$  the 
isomorphisms 
\begin{equation*}
\omega_{r, K, u}\colon\kappa[S_{r,K, u} (f)]\to \mathbb I_r(f;K,u)
\end{equation*}
such that for any pair $K', K$ of closed multi-intervals  in $\mathbb S^1$ with $K'\subset K$,  
the diagram (\ref {E20}) is commutative.
\begin{equation}\label{E20}
\vcenter{
\xymatrix{
\mathbb I_r(f;K',u) \ar[r]^-{\subseteq} &\mathbb I_r(f;K,u)  \ar[r]^-{\subseteq} &H_r( X; (\xi_f, u))\\
\kappa[S_{r, K', u}(f)]\ar[u]_\cong^{\omega_{r, K', u}}\ar[r] & \kappa[S_{r, K, u}(f)]\ar[u]_\cong^{\omega_{r, K, u}} \ar[r]& \kappa[S_{r, u}(f)]\ar[u]_\cong^{\omega_{r, u}}.
}}
\end{equation}
The horizontal arrows in the bottom line are induced by the inclusions of the sets in brackets.

b) The decompositions and the collection of compatible splittings  provide the isomorphisms
\begin{equation*}
\widetilde\omega_{r}\colon\kappa[\widetilde S_{r}(f)] \to H_r(\tilde X)
\end{equation*}
and for any closed multi interval $K\subset \mathbb R$  the 
isomorphisms 
\begin{equation*}
\widetilde{\omega}_{r, K}\colon\kappa[\widetilde S_{r,K} (f)]\to \mathbb I_r(\tilde f;K)
\end{equation*}
such that for any pair $K', K$ of closed multi-intervals in $\mathbb R$ with $K'\subset K$, the diagram  (\ref {E20b}) is commutative.
\begin{equation}\label{E20b}
\vcenter{
\xymatrix{
\mathbb I_r(\tilde f;K') \ar[r]^{\subseteq} &\mathbb I_r(\tilde f;K)  \ar[r]^{\subseteq} &H_r( \tilde X)\\
\kappa[\widetilde S_{r, K'}(f)]\ar[u]_\cong^{\widetilde \omega_{r,K'}}\ar[r] & \kappa[\widetilde S_{r, K}(f)]\ar[u]_\cong^{\widetilde \omega_{r;K}} \ar[r]& \kappa[\widetilde S_{r}(f)]\ar[u]_\cong^{\widetilde \omega_{r}}.
}}
\end{equation}
The horizontal arrows in the bottom line are induced by the inclusions of the sets in brackets. 
The isomorphism $\tilde\omega_r$ is an isomorphism of $\kappa[T^{-1}, T]$-modules.

c) The  decompositions and the splittings provide the isomorphisms 
$$
\omega^N_r\colon\kappa[T^{-1}, T]] \ [\mathcal B^c_r(f)\sqcup \mathcal B^o_{r-1}(f)]\to H_r^N(X;\xi_f).
$$ 
\end{proposition}

{\bf Even more calculations}
\vskip .1in
It is also possible to calculate $H_r(X_K)$ for $K\subset \mathbb S^1$ and $H_r(\tilde X_K)$ for $K\subset \mathbb R$. 
In this case, in addition to closed and open bar codes and  to Jordan blocks, the mixed bar codes will appear.

For the purpose of definition below we treat a closed interval $K'=[\theta', \theta''] \subset \mathbb S^1,$ $0<\theta' \leq  \theta'' <2\pi$ as the closed  interval $K= [\theta', \theta''] \subset \mathbb R$

To formulate the result for $K$ a closed interval of  $\mathbb R$ we add to the previous definitions, see formulae \eqref{E23}, the sets:
$$
\widetilde{\mathcal B}^{co}_{r,K} (f)= \{I\in \widetilde{\mathcal B}^{co}_r(f) \mid\textrm{$I\cap K\ne\emptyset$ and closed}\},
$$ 
$$
\widetilde{\mathcal B}^{oc}_{r,K} (f)= \{I\in \widetilde{\mathcal B}^{oc}_r(f) \mid\textrm{$I\cap K\ne\emptyset$ and closed}\}
$$ 
$$
\widetilde{\mathcal B}^{oo}_{r,K} (f)= \{ I\in \widetilde{\mathcal B}^{o}_r(f) \mid  I \supset K\}
$$ 
and denote by $\widetilde S'_{r, K} (f)$ the set
\begin{equation}\label {E24}
\widetilde S'_{r, K} (f)=  \widetilde {\mathcal B}^{co}_{r,K}(f)\sqcup  \widetilde {\mathcal B}^{oc}_{r,K}(f) \sqcup \widetilde{\mathcal B}^{oo}_{r,K} (f)\sqcup \widetilde S_{r,K}.
\end{equation}
We have  $\widetilde{S}_{r,K} (f)\subseteq \widetilde{S}' _{r,K} (f).$ 

\begin{proposition}\label{P36} 

a) The decompositions and  the collections of compatible splittings  provide
for any pair of angles $\theta'$, $\theta''$,  $0< \theta'\leq \theta'' <2\pi$, the isomorphisms  
$$
\omega'_{r,[\theta', \theta'']}\colon\kappa[S'_{r, [\theta', \theta'']}(f)] \to H_r(X_{[\theta', \theta'']})
$$ 
such that  for $0<\theta_1\leq \theta_2 \leq \theta_3 \leq \theta_4 <2\pi$
the diagram \eqref{E20c} below is commutative:
\begin{equation}\label{E20c}
\vcenter{
\xymatrix{
H_r(X_{[\theta_2, \theta_3]}) \ar[r]^{v_r} &H_r(X_{[\theta_1, \theta_4]})  \ar[r]^{v'_r} &H_r( X; (\xi_f,u))
\\
\kappa[S'_{r,[\theta_2,\theta_3]}(f)]\ar[u]^{\omega'_{r,[\theta_2,\theta_3]}}\ar[r] & \kappa[S'_{r,[\theta_1,\theta_4]}(f)]\ar[u]^{\omega'_{r,[\theta_1,\theta_4]}} \ar[r]& \kappa[S_{r, u}(f)]\ar[u]^{\omega_{r,u}}.
}}
\end{equation}

b) The decompositions and  the compatible splittings provide for any $a\leq b$, $a$, $b$ real numbers or $\pm\infty$
the isomorphisms 
$$
\tilde \omega'_{r, [a, b]}\colon\kappa[\tilde S'_{r, [a,b]}(f)] \to H_r(\tilde X_{[a,b]})
$$  
such that for $a\leq b\leq c\leq d$ the diagram~\eqref{E20d} below is commutative. 
\begin{equation}\label{E20d}
\vcenter{
\xymatrix{
H_r(\tilde X_{[b,c]})\ar[r] ^{v_r}         &H_r(\tilde X_{[a,d]}) \ar[r]^{v'_r}               &H_r( \tilde X) \\
\kappa[\tilde S'_{r,[b,c]}(f)]\ar[u]^{\tilde\omega_{r,[b,c]}} \ar[r] & \kappa[\tilde S'_{r,[a,d]}(f)]\ar[u]^{\tilde\omega_{r,[a,d]}} \ar[r]& \kappa[\tilde S _r(f)] \ar[u]^{\tilde \omega_{r}}.
}}
\end{equation}
In both diagrams the horizontal arrows in the top line are linear maps induced by the obvious inclusions, while in the bottom line are the canonical linear maps provided by the sets in brackets subsets of a larger set of all bar codes and all $\tilde {\mathcal J}(f)'$s, cf.\ Definition~\ref{D26}. 
A  bar code in the set $S'_{r, \cdots}$ or in ${\tilde S'}_{r,\cdots}$ is sent to itself if  belongs to the next set  and if not, to zero in the next vector space. 
\end{proposition}

Proposition \ref{P36} permits to express the vector spaces $H_r(\tilde X_{[a,b]}), H_r(\tilde X_{[c,d]}\setminus \tilde X_{(a,b)})$ and the linear maps 
$H_r(\tilde X_{[a,b]}) \to H_r(\tilde X_{[c,d]})$ and $H_r(\tilde X_{[c,d]}\setminus \tilde X_{(a,b)})\to H_r(\tilde X_{[c,d]})$ in terms of the bar codes 
$\tilde {\mathcal B}^{-}_{-}(f)$ and  $\tilde {\mathcal J}_{-}(f).$  This will be used in section (\ref{SS6}).

\begin{proof}[Proof of Propositions~\ref{P35} and \ref{P36}]
In view of the properties of the sets $S_{K,-}$ and $\tilde S_{K,-}$, it suffices to prove the statements for $K$ consisting of one single interval and
in view the  tameness of $f$ one can suppose that $\theta_1$, $\theta_2$ are critical angles and $a$, $b$ critical values.
We treat first the part (a) in both Propositions  (\ref{P35}) and  (\ref{P36}).

The compatible splittings lead to the  commutative diagram  (\ref {E31})
with horizontal arrows isomorphisms.
\begin{equation}\label{E31}
\vcenter{
\xymatrix{
{\coker M(T_{\theta_2,\theta_3}( \rho_{r}))\oplus \ker M(T_{\theta_2,\theta_3}( \rho_{r-1}))}                   \ar[d]^{v_l\oplus v_r}   \ar[r]     &H_r( X_{[\theta_2,\theta_3]})\ar[d]^v \\
{\coker M(T_{\theta_1,\theta_4}( \rho_{r})) \oplus  \ker M(T_{\theta_1,\theta_4}( \rho_{r-1}))}                   \ar[d]^{v'_l\oplus v'_r} \ar[r]     &H_r( X_{[\theta_1,\theta_4]})\ar[d]^{v'} \\
\quad  {\coker M( (\rho_{r})_u) \oplus \ker M(( \rho_{r-1})_u)}                                                                                            \ar[r]    &H_r(X; (\xi_f,u))
}}
\end{equation}
Proposition~\ref{O45} combined with Lemma~\ref{O41} gives the commutative diagram (\ref {E32}) with horizontal lines isomorphisms.
\begin{equation}\label{E32}
\vcenter{
\xymatrix{
\kappa[\tilde S'_{r,[\theta_1, \theta_4]}(f)] \ar[d] \ar[r]    &  \coker M(T_{\theta_2,\theta_3}( \rho_{r}))\oplus \ker M(T_{\theta_2,\theta_3}( \rho_{r-1}))\ar[d]^{v_l\oplus v_r}\\
\kappa[\tilde S'_{r,[\theta_2, \theta_3]}(f)] \ar[d]\ar[r]      & \coker M(T_{\theta_1,\theta_4}( \rho_{r})) \oplus  \ker M(T_{\theta_1,\theta_4}( \rho_{r-1})) \ar[d]^{v'_l\oplus v'_r}\\                     
\kappa[S_{r,u}] \ar[r]                                              & \coker M( (\rho_r)_u) \oplus \ker M(( \rho_{r-1})_u)
}}
\end{equation}

The isomorphism $\omega_{r,u}$ (in Proposition~\ref{P35}) 
is the composition of horizontal arrows in the last line of diagrams \eqref{E31} and \eqref{E32} 
and the isomorphisms $\omega_{r,[\theta_2, \theta_3],u}$ and $\omega_{r, [\theta_1, \theta_4], u}$ are restrictions of $\omega_{r,u}$. 
Similarly he isomorphism $\omega'_{r,[\theta_2,\theta_3]}$ and $\omega'_{r,[\theta_1,\theta_4]}$ (in Proposition~\ref{P36}) are the compositions of the horizontal arrows in the first 
and second lines of the same diagrams.  
The commutativity of the diagrams (\ref {E20}) and (\ref {E20c})
is the consequence of the commutativity of the diagrams \eqref{E31} and \eqref{E32}. 
This establishes part (a) in both Propositions~\ref{P35} and \ref{P36}.

Parts (b) are verified essentially in the same way. 
More precisely, the decompositions of the representations $\rho_r$ imply decompositions of $\tilde \rho_r$ and $T_{k,l}(\tilde \rho_r)$.
Observe that the commutative diagrams \eqref{E31} and \eqref{E32} 
remain valid when one replaces $X$ by $\tilde X$, the representation $\rho_r$ by $\tilde \rho_r$, and $\theta_1$, $\theta_2$, $\theta_3$, $\theta_4$ by $a$, $b$, $c$, $d$.   
In this case $\tilde\omega$ is defined in the same way as $\omega_u$, namely as the composition of the horizontal arrows in the last lines of the diagrams which replace 
diagrams \eqref{E31} and \eqref{E32} derived considering $\tilde \omega$ instead of $\omega_u$. 

To check part (c) in Proposition~\ref{P35}, observe first that 
$\kappa [\tilde S_r(f)]=\kappa[\tilde {\mathcal B}^o_{r-1}(f)]\oplus \kappa[\tilde {\mathcal B}^c_{r}(f)\sqcup \tilde{\mathcal J}_r( f)]$ 
and as pointed out by Lemma~\ref{OOO} at the end of Section~\ref{S2}, both linear maps $\Psi^o$ and $\Psi^c$ are actually isomorphisms of 
$\kappa[T^{-1}, T]$ modules; therefore so is $\tilde{\omega}_r$.  
Then one takes $\omega^N_r=\tilde{\omega}_r\otimes_{\kappa[T^{-1}, T]}\kappa [T^{-1}, T]]$. 
Clearly $\kappa [\tilde S_r(f)]\otimes_{\kappa[T^{-1}, T]}\kappa [T^{-1}, T]]=\kappa [T^{-1}, T]] [\mathcal B^c_r(f)\sqcup \mathcal B^o_{r-1}(f)]$ 
since $\kappa[ \tilde J( f)]$ as a  $\kappa[T^{-1}, T]$-module is a torsion module, cf.\ Lemma~\ref{OOO}. 
\qed
\end{proof}

\section{Stability for configurations $C_r(f)$. Proof of Theorem~\ref{T2}}\label{SS5}

The proof of Theorems~\ref{T2} and \ref{T3} will require an alternative definition of the configurations $C_r(f)$. 
This will be provided by the integer valued functions $\delta_r^f$ which will be defined for a proper real-valued tame map and then, 
via the infinite cyclic covering  for a tame angle-valued map. Ultimately they are defined for any continuous map. 

\subsection{Real valued maps}

For $f\colon X\to\mathbb R$ a map and $a,b\in\mathbb R$, introduce the notation
$X^f(a)=f^{-1}(a)$, $X^f_a= f^{-1}((-\infty, a])$, $X^b_f= f^{-1}([b,\infty))$, $^fX^b_a=X^a_f\cap X^f_b.$ 
Let $i^f_a\colon X^f_a\to X$ and $i^b_f\colon X^b_f\to X$ be the obvious inclusions.
Denote by 
\begin{align*}
\mathbb I^f_a(r)&:=\img\bigl(i^f_a (r)\colon H_r(X^f_a)\to H_r(X)\bigr),
\\
\mathbb I^b_f(r)&:=\img\bigl(i^b_f(r)\colon H_r( X^b_f)\to H_r(X)\bigr),
\end{align*} 
and let $F^f_r(a,b):=\dim (\mathbb I^f_a(r)\cap \mathbb I^b_f(r))$ and $G^f_r(a,b):=\dim H_r(X)/(\mathbb I^f_a(r) + \mathbb I^b_f(r))$.

Observe that: 

 \begin{lemma}\label{O3}\ 
\begin{enumerate}[(a)]
\item\label{O3:a}
For $a\leq a'$ and $b'\leq b$, we have $F^f_r(a,b)\leq F^f_r(a',b')$ and $G^f_r(a,b)\geq G^f_r(a',b')$.    
\item\label{O3:b}
If $|f-g|<\epsilon$ and $a\leq b$ then $F^f_r(a-\epsilon,b+\epsilon)\leq F^g_r(a, b)$ and $G^f_r(a,b) \leq G^g_r(a-\epsilon, b+\epsilon)$.
\item\label{O3:c}
$F^f_r(a,b)=F^{-f}_r(-b,-a)$ and $ G^f_r(a,b)= G^{-f}_r(-b,-a)$.
\end{enumerate}
\end{lemma}

\begin{proof}
To check \itemref{O3:a}, notice that $X^f_a\subseteq X^f_{a'}$ and $X^{b'}_f\supseteq X^b_f$ imply $\mathbb I^f_a\subseteq \mathbb I^f_{a'}$ and $\mathbb I^{b'}_f\subseteq \mathbb I^{b}_f$ and $\mathbb I^{b'}_f\subseteq \mathbb I^{b}_f$,
hence $\mathbb I^f_a\cap \mathbb I^b_f \subseteq \mathbb I^f_{a'} \cap \mathbb I_f^{b'}$ and $\mathbb I^f_a +\mathbb I^b_f \subseteq \mathbb I^f_{a'} + \mathbb I_f^{b'},$ then the statement. 
To check \itemref{O3:b}, notice that $|f-g|<\epsilon$ implies $f-\epsilon <g<f+\epsilon$ which implies $X^f_{a-\epsilon}\subseteq X^g_a$ and $X^f_{b+\epsilon} \subseteq X^g_b$.
These inclusions imply $\mathbb I^f_{a-\epsilon}\subseteq \mathbb I^g_{a}$ and $\mathbb I^{b+\epsilon}_f\subseteq \mathbb I_g^{b}$, 
hence $F^f(a-\epsilon, b+\epsilon)\leq F^g(a,b)$. The arguments for $G$ are similar.
To check \itemref{O3:c}, one uses the fact that $f^{-1}((-\infty,a])=(-f)^{-1}([-a,\infty))$.
\qed
\end{proof}

If $X$ is a compact ANR it is immediate that both $F^f_r(a,b)$ and $G^f_r(a,b)$ are finite since $\dim H_r(X)$ is finite. 
The same remains true for $f\colon X\to \mathbb R$ a tame map (hence proper) with $X$ not compact  despite the fact that $\dim H_r(X)$ is not necessarily finite. 

\begin{proposition}\label{P51} 
If $f\colon X\to\mathbb R$ is a tame map, then:
\begin{enumerate}[(a)]
\item\label{P51:a}
$F_r^f(a,b)<\infty$.
\item\label{P51:b}
$G^f_r(a,b)<\infty$.
\end{enumerate}
\end{proposition}

\begin{proof} To ease the writing, we  (sometimes) drop $f$  from notation. 
We start with \itemref{P51:a}.
In view of Observation~\ref{O3} it suffices to check the statements for $a>b$. 

Consider 
$$
i_a(r)-i^b(r)\colon H_r(X_a)\oplus H_r(X^b) \to H_r(X),
$$ 
let $p_1\colon H_r(X_a)\oplus H_r(X^b) \to H_r(X_a)$ be the first factor projection 
and observe that $\mathbb I^f_a(r)\cap\mathbb I^b_f(r)=(i_a(r)\cdot p_1)(\ker(i_a(r)-i^b(r))$.

Then  
$$
\dim\bigl(\mathbb I^f_a(r)\cap \mathbb I^b_f(r)\bigr)\leq\dim\ker\bigl(i_a(r)-i^b(r)\bigr).
$$
Since $a\geq b$ we have $X= X_a\cup X^b$.
In view of the Mayer--Vietoris long exact sequence associated with $X=X_a\cup X^b$ 
$$
\ker\bigl(i_a(r)-i^b(r)\bigr)=\img\bigl(H_r(X^b_a)\to H_r(X_a)\oplus H_r(X^b)\bigr)
$$  
has finite dimension since $\dim H_r(X^b_a)$ is finite. 

Next we prove \itemref{P51:b}.
If $a<b$ one uses the long exact sequence of the pair $(X,X_a\sqcup X^b)$ to conclude that  
$H_r(X)/(\mathbb I^f_a(r) + \mathbb I^b_f(r))$ is isomorphic to a subspace of $H_r(X, X_a\sqcup X^b)= H_r(X^b_a,X(a) \sqcup X(b))$ which is of finite dimension. 
Indeed, $f$ tame implies that $X(a)$, $X(b)$, and $X_a^b$ are compact ANRs, hence with finite dimensional homology.

If $a\geq b$ one uses the Mayer--Vietoris exact sequence associated with $X_a$, $X^b$ to conclude that  
$H_r(X)/(\mathbb I^f_a(r) + \mathbb I^b_f(r))$ is isomorphic to a subspace of $H_r(X^b_a)$ which is of finite dimension.
\qed
\end{proof}

Let $a<b$ and $c<d$. 
We refer to the set 
$$
B(a,b: c,d)=(a,b]\times[c,d)\subset\mathbb R^2,\qquad\textrm{$a<b$, $c<d$,}
$$  
as a ``box'', and define:
\begin{equation}\label{EQQ1}
\begin{aligned}
\mu_r^{F,f}(B)&=F_r^f(a,d) + F_r^f(b,c)- F_r^f(a,c) - F_r^f(b,d),
\\
\mu_r^{G,f}(B)&=-G_r^f(a,d) - G_r^f(b,c)+ G_r^f(a,c) + G_r^f(b,d).
\end{aligned}
\end{equation}

One has:

\begin{proposition}\label{P52}
If $X$ is compact or $f$ is a tame map, then:
\begin{enumerate}[(a)]
\item\label{P52a}
$\mu_r^{F,f}(B)=\mu_r^{G,f}(B)$.  
\item\label{P52b}
Putting $\mu^f_r(B):=\mu_r^{F,f}(B)=\mu_r^{G,f}(B)$, we have $\mu^f_r(B)\geq0$.
\item\label{P52c}
If $B=B_1\cup B_2$, $B_1\cap B_2=\emptyset$ with $B_1$, $B_2$ boxes, then $\mu^f (B)=\mu^f(B_1)+\mu^f(B_2)$.
\item\label{P52d} If if $B'$ and $B''$ are boxes with $B'\subseteq B''$ one has $\mu^f(B')\leq \mu^f(B'')$. 
\end{enumerate}
\end{proposition}

\begin{proof}  
To ease the writing, we drop $f$ and $r$ from notation and introduce:
\begin{align*}
I_1&:=\dim\bigl(\mathbb I_a\cap\mathbb I^d\bigr),
\\
I_2&:=\dim\bigl((\mathbb I_a\cap\mathbb I^c)/(\mathbb I_a\cap\mathbb I^d)\bigr),
\\
I_3&:=\dim\bigl((\mathbb I_b\cap\mathbb I^d)/(\mathbb I_a\cap \mathbb I^d)\bigr),
\\
I_4&:=\dim\bigl((\mathbb I_b\cap \mathbb I^c)/(\mathbb I_a\cap\mathbb I^c+\mathbb I_b\cap\mathbb I^d)\bigr),
\\
I_5&:=\dim\bigl(\mathbb I_b/(\mathbb I_a+\mathbb I_b\cap\mathbb I^c)\bigr),
\\
I_6&:=\dim\bigl(\mathbb I^c/(\mathbb I_a\cap\mathbb I^c+\mathbb I^d)\bigr), 
\\
I_7&:=\dim\bigl(H/(\mathbb I_b+\mathbb I^c)\bigr),\qquad\textrm{with $H=H_r(X)$.}
\end{align*}
\begin{figure}
\begin{tikzpicture}[scale=0.3]
\draw(0,0) -- (0,20) -- (28,20) -- (28, 0) -- (0,0) node at (1,19) {$H$};
\draw[ultra thick, fill=yellow](4,8) -- (4,18) -- (14,18) -- (14, 8) -- (4,8) node at (5,17) {$\mathbb I_b$};
\draw(6,10) -- (6,16) -- (12,16) -- (12, 10) -- (6,10) node at (7,15) {$\mathbb I_a$};
\draw[ultra thick](8,2) -- (8,14) -- (20,14) -- (20, 2) -- (8,2) node at (19,3) {$\mathbb I^c$};
\draw(10,4) -- (10,12) -- (18,12) -- (18, 4) -- (10,4) node at (17,5) {$\mathbb I^d$};
\end{tikzpicture}
\caption{An illustration for the proof of Proposition~\ref{P52}.}
\label{F:boxes}
\end{figure}
Using Figure~\ref{F:boxes}, it is not hard to notice that: 
\begin{align*}
F(a,d)&=I_1, & G(a,d)&=I_7 +I_6 +I_5 + I_4,
\\
F(b,c)&=I_1+ I_2 + I_3 + I_4, & G(b,c)&=I_7,
\\
F(a,c)&=I_1 + I_2, & G(a,c)&=I_7 +I_5,
\\
F(b,d)&=I_1 +I_3, & G(b,d)&=I_7 + I_6.
\end{align*}
Then we have: 
\begin{multline*}
F(a,d)+F(b,c)-F(a,c)-F(b,d)
\\=I_1+(I_1+I_2+I_3+I_4)-(I_1+I_2)-(I_1+I_3)=I_4
\end{multline*}
and
\begin{multline*}
G(a,d)+G(b,c)-G(a,c)-G(b,d)
\\=(I_7+I_6+I_5+I_4)+I_7-(I_7+I_5)-(I_7+I_6)=I_4.
\end{multline*}
These equalities establish \itemref{P52a} and \itemref{P52b}.
Part \itemref{P52c} follows from (\ref {EQQ1}) by inspecting the all relative positions of $B_1$ and $B_2$ as disjoint subsets of $B.$
To check part \itemref{P52d} one tiles $B''$ as a disjoint union of boxes $B''= B''_1\sqcup B''_2\cdots \sqcup B''_{r-1}\sqcup B''_r$ s.t. $B'$ is one of these boxes and use  item \itemref{P52c} to inductively derive that $\mu(B'')= \sum _{1\leq i\leq r} \mu(B''_u).$ This implies the result  

\qed
\end{proof}

Note that  both Propositions (\ref{P51}) and (\ref{P52}) remain valid for $f$ a proper continuous  map  but with more elaborated arguments and  this is not 
not needed  in this paper.

Define the \emph{jump function}, $\delta^f_r\colon\R^2\to\mathbb \mathbb Z_{\geq 0}$, by 
\begin{equation}\label{EQQ2}
\delta^f_r(a,b):=\lim_{\epsilon \to 0}\mu^f\bigl((a-\epsilon, a+\epsilon]\times [b-\epsilon, b+\epsilon)\bigr).  
\end{equation}
The limit exists since, by Proposition~\ref{P52}\itemref{P52c}, the right side decreases when $\epsilon$ decreases.
This function has values in $\mathbb Z_{\geq 0}.$ Since the critical values of a tame map are discrete, $\delta^f_r$ has discrete support and satisfies the following proposition.

\begin{proposition}\label{P53}
If $X$ is compact or $f$ is a tame map then:
\begin{enumerate}[(a)]
\item\label{P53a}
For $a<b$ and $c<d$ one has $\mu^f_r\bigl((a,b]\times [c,d)\bigr)=\sum_{a<x\leq b, c\leq y<d}\delta^f_r (x,y)$.
\item\label{P53b}
$F^f_r(b,c)=\sum_{x\leq b, c\leq y}\delta^f_r(x,y)$.  
\item\label{P53c}
$G^f_r(a,d)=\sum_{a\leq x, y\leq d}\delta^f_r(x,y)$. 
\end{enumerate}
\end{proposition}

\begin{proof}
Item~\itemref{P53a} follows from Proposition~\ref{P52}\itemref{P52c} as shown below.

First observe that in in view of (\ref {EQQ1}) if both $(b-a)$ and $(d-c)$ are small  enough then $\mu^f_r\bigl((a,b]\times [c,d)\bigr)= \delta^f_r(c,d).$ 
Then choose  subdivisions $a_0=a <a_1 <\cdots < a_r= b$ and  $c=c_0 <c_1 <\cdots < c_k= d$ such that all critical values between $a$ and $b$ rep. between $c$ and $d$ are among $a_i$'s and $c_j$'s respectively and both $a_{i}- a_{i-1}$ and $c_{j}- c_{j-1}$ are small enough s.t. $\mu^f_r((a_{i-1}, a_i]\times [ c_i, c_{i+1}))= \delta^f_r(a_i, c_i).$  Applying Proposition~\ref{P52}\itemref{P52c}  one obtains $\mu^f_r\bigl((a,b]\times [c,d)\bigr)= \sum_{i\geq 1, j \geq 1} \mu^f_r\bigl((a_{i-1},a_i]\times [c_{j-1}, c_j,d)\bigr)$ which implies the result as stated.

Item \itemref{P53b}  follows from \itemref{P53a} by letting  $a\to-\infty$ and $d\to\infty$.
Similarly, item \itemref{P53c} follows from item \itemref{P53a} by letting $b\to\infty$ and $c\to-\infty$.  
\qed
\end{proof}

Since for a tame map $f$ the set of critical values is discrete we write them as a sequence  $\cdots<c_{i-1} < c_i<c_{i+1}<\cdots$ and 
define  
$$
\epsilon (f)=\inf_{i\in \mathbb Z}(c_{i+1}-c_i).
$$
Clearly, if $f\colon X\to\mathbb R$ is tame with $X$ compact, then $\epsilon(f)>0$ 
and if $f\colon X\to \mathbb S^1$ is tame then the infinite cyclic covering $\tilde f\colon\tilde X\to \mathbb R$ is tame and $\epsilon (\tilde f)>0$.

\begin{proposition}\label{O12}
Let  $f\colon X\to \mathbb R$ be a tame map with $\epsilon (f)>0$.  
For any $\epsilon,\epsilon'<\epsilon(f)$ one has:
\begin{equation}\label{E:O12a}
F^f_r(c_i, c_j) = F^f_r(c_i+\epsilon, c_j-\epsilon') = F^f_r(c_{i+1}-\epsilon, c_{j-1}+\epsilon'),
\end{equation} 
and
\begin{equation}\label{E:O12b}
\delta^f _r(c_i, c_j)= F^f_r(c_{i-1}, c_{j+1}) + F^f_r(c_i, c_j)- F^f_r(c_{i-}, c_j) - F^f_r(c_i, c_{j+1}).
\end{equation}
\end{proposition}

\begin{proof}
The tameness of $f$ and the hypothesis $\epsilon,\epsilon'<\epsilon(f)$ imply that the inclusions $X^f_{c_i}\subseteq X^f_{c_i+\epsilon}$, $X^f_{c_i}\subseteq X^f_{c_{i+1}-\epsilon'}$  
and $X_f^{c_j-\epsilon}\supseteq X_f^{c_j}$, $X_f^{c_{j-1}+\epsilon'}\supseteq X_f^{c_j}$ induce isomorphisms in homology. 
These facts imply that $\mathbb I^f_{c_i}=\mathbb I^f_{c_i+\epsilon}=\mathbb I^f_{c_{i+1}-\epsilon'}$ and $\mathbb I_f^{c_{j-1}+\epsilon}=\mathbb I_f ^{c_j-\epsilon'}=\mathbb I_f ^{c_j}$
which imply \eqref{E:O12a}. 
To check \eqref{E:O12b}, recall that in view of (\ref {EQQ1}), (\ref{EQQ2}) and (\ref{E:O12a}), for $\epsilon$ very small, one has 
$\delta^f_r (c_i, c_j)= F_r(c_i-\epsilon, c_j+\epsilon) + F_r(c_i+\epsilon, c_j-\epsilon)- F_r(c_i-\epsilon, c_j-\epsilon) - F_r(c_i+\epsilon, c_j+\epsilon)$.
Thus \eqref{E:O12b} follows from \eqref{E:O12a} by taking $\epsilon<\epsilon (f)$. 
\qed
\end{proof}

For a pair $(a,b)\in \mathbb R^2$ and $\epsilon >0$ consider the box $B(a,b;2\epsilon)=(a-2\epsilon,a+2\epsilon]\times [b-2\epsilon, b+2\epsilon)$.

\begin{proposition}\label{P55}
Let $f\colon X\to\mathbb R$ be a tame map with $\epsilon (f)>0$ and 
$\epsilon<\epsilon(f)/4.$ For  any tame map $g$ with $|f-g|<\epsilon$ and any $(a,b)\in\supp\delta^f_r$ the following holds:
\begin{enumerate}[(a)]
\item\label{P55a}
$\supp(\delta^f_r)\cap B(a,b;2\epsilon)\equiv (a,b)$       
\item\label{P55b}
$\sharp\bigl(\supp(\delta^g_r)\cap\bigl(\bigsqcup_{(a,b)\in\supp\delta^f_r}B(a,b;2\epsilon)\bigr)\bigr)=\sharp\supp(\delta^f_r)$.
\end{enumerate}
In particular, if the cardinality of the supports\footnote{Recall that the cardinality of the support is the sum of multiplicity of the elements in the support.} 
of $\delta^f_r$ and $\delta^g_r$ are equal 
and $|g-f|<\epsilon$, then the support of $\delta^g_r$ lies in an $\epsilon$-neighborhood\footnote{Here $\epsilon$-neighborhood of $(a,b)$ 
means the domain $(a-\epsilon,a+\epsilon)\times (b-\epsilon,b+\epsilon)$.} of the support of $\delta^f_r$. 
\end{proposition}

This Proposition is closed to Box Lemma in \cite {CEH07} page 112.
\begin{proof}
Item~\itemref{P55a} follows from the definition of $\delta^f_r$.
To prove item~\itemref{P55b} observe that if $(a,b)\in\supp\delta^f$ both numbers have to be critical values, hence the $a=c_i$, $b=c_j$. 
In view of Proposition~\ref{O12}, for any $\epsilon',\epsilon''<\epsilon(f)$ one has:
\begin{equation}\label{E37}
\begin{aligned}
F^f_r(c_{i-1}, c_{j+1})&=F^f_r(a-\epsilon', b+\epsilon'')\\
F^f_r(c_i, c_j)&=F^f_r(a+\epsilon', b-\epsilon'')\\
F^f_r(c_i, c_{j+1})&=F^f_r(a+\epsilon', b+\epsilon'')\\
F^f_r(c_{i-1}, c_j)&=F^f_r(a-\epsilon', b-\epsilon'')
\end{aligned}
\end{equation}
Since $|f-g|<\epsilon$, in view of Observation~\ref{O3} one has:
\begin{equation}\label{E38}
\begin{array}{rcccl}
F^f_r(a-3\epsilon,b+3\epsilon)&\leq&F^g_r(a-2\epsilon,b+2\epsilon)&\leq&F^f_r(a- \epsilon,b+\epsilon) \\
F^f_r(a+ \epsilon,b- \epsilon)&\leq&F^g_r(a+2\epsilon,b-2\epsilon)&\leq&F^f_r(a+3\epsilon,b-3\epsilon)\\
F^f_r(a+ \epsilon,b+3\epsilon)&\leq&F^g_r(a+2\epsilon,b+2\epsilon)&\leq&F^f_r(a+3\epsilon,b+\epsilon) \\
F^f_r(a-3\epsilon,b- \epsilon)&\leq&F^g_r(a-2\epsilon,b-2\epsilon)&\leq&F^f_r(a- \epsilon,b-3\epsilon)
\end{array}
\end{equation}
Since $\epsilon<\epsilon(f)/4$, equations~\eqref{E37} and \eqref{E38} imply:
\begin{equation}\label{E39}
\begin{aligned}
F^g_r(a-2\epsilon, b+2\epsilon)&=F^f_r(c_{i-1}, c_{j+1})\\
F^g_r(a+2\epsilon, b-2\epsilon)&=F^f_r(c_i,c_j)\\
F^g_r(a+2\epsilon, b+2\epsilon)&=F^f_r(c_{i},c_{j+1})\\
F^g_r(a-2\epsilon, b-2\epsilon)&=F^f_r(c_{i-1}, c_{j}) 
\end{aligned}
\end{equation}
In view of Proposition~\ref{P53} we have
\begin{equation*}
\begin{aligned}
\sharp\bigl(\supp(\delta^g_r)\cap B(a,b;2\epsilon)\bigr)
&=\mu^g_r\bigl(B(a,b;2\epsilon)\bigr)
\\&=F^g_r(a-2\epsilon,b+2\epsilon)+F^g_r(a+2\epsilon,b-2\epsilon)
\\&\qquad-F^g_r(a-2\epsilon,b-2\epsilon)-F^g_r(a+2\epsilon,b+2\epsilon).
\end{aligned}
\end{equation*}
Using the equations~\eqref{E39} and the equation~\eqref{E:O12b} in Proposition~\ref{O12} one derives
$$
\sharp\bigl(\supp(\delta^g_r)\cap B(a,b;2\epsilon)\bigr)=\sharp\bigl(\supp(\delta^f_r)\cap B(a,b;2\epsilon)\bigr)=\delta^f_r(a,b),
$$ 
\qed
\end{proof}

\subsection{Angle valued maps} 

Let $f\colon X\to\mathbb S^1$ be a tame map and $\tilde f\colon\tilde X\to \mathbb R$ its infinite cyclic covering.  
Note that $\epsilon(\tilde f)>0.$ Observe that 
\begin{equation}\label{EEE1}
\delta^{\tilde f}_r(a, b)=\delta^{\tilde f}_r(a+2\pi, b+2\pi).
\end{equation}

Consider the projection $p\colon\mathbb R^2\to \mathbb T=\mathbb R^2/ \mathbb Z$, with $\mathbb T$ the quotient space of $\mathbb R^2$ by the action 
$\mathbb Z\times \mathbb R^2\rightarrow  \mathbb R^2$ given by 
$(n, (a,b))\rightarrow (a+2\pi n, b+2\pi n)$. Write $p(a,b)=\langle a, b\rangle.$
Define 
$$
\epsilon (f):=\epsilon (\tilde f)
$$ 
and  
\begin{equation}\label{EQQ}
\delta^f_r(\langle a, b \rangle):= \delta^{\tilde f}_r(a,b).
\end{equation}

In view of \eqref{EEE1}, $\delta^f_r\colon\mathbb T\to \mathbb Z_{\geq 0}$ is a well defined  function with finite support and Proposition~\ref{P55} holds 
true for $f\colon X\to \mathbb S^1.$ 

For  the proof of Theorem~\ref{T2} we also need to show that $\delta^f_r$ and $C_r(f),$ when viewed as functions on $\mathbb T,$ are equal.

\begin{proposition}\label{O78}
If $f$ is a tame real- or angle-valued map defined on $X$, a compact ANR, then $\delta^f_r$ and $C_r(f)$ are equal $\mathbb Z_{\geq 0}-$valued functions defined on $\mathbb R^2$ or $\mathbb T$. 
\end{proposition}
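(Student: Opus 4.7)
The plan is to descend everything to the universal cover and reduce the identity to $\mathbb{R}^2$. By the definition (\ref{EQQ}) the function $\delta^f_r$ on $\mathbb{T}$ is the pushforward of $\delta^{\tilde f}_r$ under $p\colon \mathbb{R}^2\to\mathbb{T}$, and by Observation \ref{O41}(1) the configuration $C_r(f)$ is the pushforward under $p$ of the analogous configuration $C_r(\tilde f)\subset\mathbb{R}^2$ recording the closed $r$-bar codes and open $(r-1)$-bar codes of $\tilde f$ (under the identifications $[c,d]\leftrightarrow(c,d)$ and $(\alpha,\beta)\leftrightarrow(\beta,\alpha)$ described in Section~\ref{S1}). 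It therefore suffices to show that for each $(a,b)\in\mathbb{R}^2$ the number $\delta^{\tilde f}_r(a,b)$ equals the multiplicity of $(a,b)$ in $C_r(\tilde f)$, i.e.\ the number of closed $r$-bar codes $[a,b]$ plus the number of open $(r-1)$-bar codes $(b,a)$ of $\tilde f$.

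Next, I apply Proposition~\ref{P35}(2) to the semi-infinite closed multi-intervals $K_x:=(-\infty,x]$ and $K^y:=[y,\infty)$ (both are admissible under the paper's definition of closed multi-interval in $\mathbb{R}$). Through the fixed isomorphism $\tilde\omega_r\colon\kappa[\tilde S_r(f)]\to H_r(\tilde X)$, the subspaces $\mathbb{I}^{\tilde f}_x(r)=\mathbb{I}_r(\tilde f;K_x)$ and $\mathbb{I}^y_{\tilde f}(r)=\mathbb{I}_r(\tilde f;K^y)$ become the spans of the indexing multi-sets $\tilde S_{r,K_x}(f)$ and $\tilde S_{r,K^y}(f)$ inside $\kappa[\tilde S_r(f)]$, by the compatibility with inclusions. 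Since for any two subsets $S_1,S_2$ of a basis of a $\kappa$-vector space one has $\kappa[S_1]\cap\kappa[S_2]=\kappa[S_1\cap S_2]$, intersecting yields
\begin{equation*}
F^{\tilde f}_r(x,y)=\sharp\{[c,d]\in\tilde{\mathcal B}^c_r(f)\mid c\leq x,\ d\geq y\}+\sharp\{(\alpha,\beta)\in\tilde{\mathcal B}^o_{r-1}(f)\mid\beta\leq x,\ \alpha\geq y\}+\sharp\tilde{\mathcal J}_r(f).
\end{equation*}

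Plugging this into the defining alternating sum $\mu^{\tilde f}_r\bigl((a-\epsilon,a+\epsilon]\times[b-\epsilon,b+\epsilon)\bigr)$, the constant term $\sharp\tilde{\mathcal J}_r(f)$ cancels. For each closed bar code $[c,d]$, a two-variable inclusion–exclusion applied to the conditions $\{c\leq\cdot\}$ and $\{d\geq\cdot\}$ shows that its net contribution is the indicator of $c\in(a-\epsilon,a+\epsilon]$ and $d\in[b-\epsilon,b+\epsilon)$, and analogously each open bar code $(\alpha,\beta)$ contributes the indicator of $\beta\in(a-\epsilon,a+\epsilon]$ and $\alpha\in[b-\epsilon,b+\epsilon)$. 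Since the endpoints of all bar codes of $\tilde f$ lie in the discrete set of critical values of $\tilde f$, for $\epsilon<\epsilon(\tilde f)/2$ these indicators single out exactly the bar codes with $(c,d)=(a,b)$ or with $(\beta,\alpha)=(a,b)$.

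Passing to the limit $\epsilon\to 0$ gives $\delta^{\tilde f}_r(a,b)$ equal to the total multiplicity of closed bar codes $[a,b]$ and open bar codes $(b,a)$ of $\tilde f$, which is exactly the multiplicity of $(a,b)$ in $C_r(\tilde f)$; projecting by $p$ yields $\delta^f_r=C_r(f)$. The main obstacle I foresee is the second step: verifying that the intersection of the two (possibly infinite-dimensional) images $\mathbb{I}^{\tilde f}_x(r)\cap\mathbb{I}^y_{\tilde f}(r)$ inside $H_r(\tilde X)$ is indeed identified with the span of the intersection of the indexing sets, and in particular that the Jordan block summand $\kappa[\tilde{\mathcal J}_r(f)]$ sits coherently inside both images. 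This should follow cleanly from Proposition~\ref{P35}(2) since the proposition explicitly lists $\tilde{\mathcal J}_r(f)$ in $\tilde S_{r,K}(f)$ for every closed multi-interval $K$ and asserts that all the $\tilde\omega_{r,K}$ are mutually compatible via inclusions, so that the subset calculus takes place entirely inside a single fixed basis of $H_r(\tilde X)\cong\kappa[\tilde S_r(f)]$.
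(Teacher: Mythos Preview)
Your proof is correct and follows essentially the same strategy as the paper: both compute $F^{\tilde f}_r(x,y)$ in terms of bar codes via Proposition~\ref{P35} and then take the alternating sum defining $\delta^{\tilde f}_r$. Your organization is slightly cleaner than the paper's: by applying Proposition~\ref{P35}(2) directly to the semi-infinite multi-intervals $(-\infty,x]$ and $[y,\infty)$ you obtain a single formula for $F^{\tilde f}_r(x,y)$ valid for all $(x,y)$, whereas the paper first invokes Proposition~\ref{O12} to reduce to critical values and then splits into the two cases $c_i\geq c_j$ (formula~(\ref{EQ46}), via Propositions~\ref{P72} and \ref{PPP}) and $c_i<c_j$ (formula~(\ref{EQ47})); your concern about the intersection of images is indeed handled by the compatibility in Proposition~\ref{P35}(2), since each $\mathbb I_r(\tilde f;K)$ is the $\tilde\omega_r$-image of the span of a subset of the fixed basis $\tilde S_r(f)$ of $H_r(\tilde X)$.
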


\begin{proof}
We check the case of an angle valued map $f\colon X\to \mathbb S^1$ only. 
The real valued case can be regarded as a particular case of the angle valued map. 
First note that $\epsilon (f)>0$. 
In view of the definition of $\delta^{\tilde f}_r$ it suffices to check that:
\begin{enumerate}[(i)]
\item\label{O78i}
If at least one, $a$ or $b,$ is not a critical value then  
we have $\delta^{\tilde f}_r(a,b)=0$.
\item\label{O78ii}
If $a=c_i$ and $b=c_j$ are critical values with $c_i\geq c_j$, then
\begin{equation*}
\delta^{\tilde f}_r(c_i,c_j)=\sharp\bigl\{I\in \tilde{\mathcal B}^c_r(f)\bigm|I=[c_j, c_i]\bigr\}.
\end{equation*}
\item\label{O78iii}
If $a=c_i$ and $b=c_j$ are critical values with $c_i<c_j$, then
\begin{equation*}
\delta^{\tilde f}_r(c_i,c_j)=\sharp\bigl\{I\in \tilde{\mathcal B}^o_{r-1}(f)\bigm|I=(c_j, c_j)\bigr\}.
\end{equation*}
\end{enumerate}

Recall that $\delta^{\tilde f}_r(a,b)=\lim_{\epsilon \to 0}(-F^{\tilde f}_r(a-\epsilon, b-\epsilon)- F^{\tilde f}_r(a+\epsilon, b+\epsilon)+ F^{\tilde f}_r(a-\epsilon, b+\epsilon) + F^{\tilde f}_r(a+\epsilon, b-\epsilon))$.
In view of Proposition~\ref{O12}, if $a$ is not a critical value, $y\in\mathbb R$ and $\epsilon>0$ is sufficiently small, then $F^{\tilde f}_r(a-\epsilon,y)=F^{\tilde f}_r(a+\epsilon,y)$ 
and thus $\delta^{\tilde f}_r(a,y)=0$. Similarly, if $b$ is not a critical value, $x\in\mathbb R$ and $\epsilon>0$ is sufficiently small, then
$F^{\tilde f}_r(x,b-\epsilon)=F^{\tilde f}_r(x,b+\epsilon)$ and thus $\delta^{\tilde f}_r(x,b)=0$. This establishes statement \itemref{O78i}.

Suppose that $a=c_i$ and $b=c_j$ critical values. In view of Proposition~\ref{O12} and of the definition of $\delta^{\tilde f}$ one has
\begin{equation}\label{EQ45}
\delta^{\tilde f}_r(c_i, c_j)= -F^{\tilde f}_r(c_{i-1},c_j) - F^{\tilde f}_r(c_i,c_{j+1}) +F^{\tilde f}_r(c_{i-1}, c_{j+1}) + F^{\tilde f}_r(c_i, c_j).
\end{equation}
By Propositions~\ref{P36} (b) when $c_i\geq c_j,$ one has
\item 
\begin{multline}\label{EQ46}
F^{\tilde f}_r(c_i, c_j)
=\sharp\bigl\{I\in\tilde{\mathcal B}_r^c(f)\bigm|I \cap [c_j,c_i]\ne\emptyset\bigr\}
\\+\sharp\bigl\{I\in\tilde{\mathcal B}^o_{r-1}(f)\bigm|I\subset(c_j,c_i)\bigr\}
+\sharp\tilde{\mathcal J}_r(f),
\end{multline}
and when $c_i>c_j$
one has
\begin{equation}\label{EQ47}
F^{\tilde f}_r(c_i,c_j)
=\sharp\bigl\{I\in\tilde{\mathcal B}_r^c(f)\bigm| I\supset[c_i,c_j]\bigr\}
+\sharp\tilde{\mathcal J}_r(f). 
\end{equation}

Indeed Proposition~\ref{P36} (b) calculates $\mathbb I^{\tilde f}_a(r)$ and $\mathbb I_{\tilde f}^b(r)$ as the $\kappa-$vector space generated by 
$$\{ I\in \widetilde {\mathcal B}_r^c \mid  I\cap (-\infty, a]\ne \emptyset\}\cup \{  I\in \widetilde {\mathcal B}_{r-1}^o \mid  I\subset (-\infty, a)\}$$
and $$\{ I\in \widetilde {\mathcal B}_r^c \mid  I\cap ([b,\infty )\ne \emptyset\}\cup \{  I\in \widetilde {\mathcal B}_{r-1}^o \mid  I\subset (b, \infty)\}$$
of which the above descriptions follow.

Comparing the collections of bar codes whose cardinality are given by $F^{\tilde f}_r(c_{i-1},c_j)$, $F^{\tilde f}_r(c_i,c_{j+1})$, $F^{\tilde f}_r(c_{i-1}, c_{j+1})$ 
and $F^{\tilde f}_r(c_i,c_j)$ and using \eqref{EQ45}, \eqref{EQ46} and \eqref{EQ47} one derives the statement 
\itemref{O78ii}, and 
\itemref{O78iii}.
\qed
\end{proof}

To prove Theorem ~\ref{T2} 
one  begins with a few observations. 
\begin{enumerate}[(i)]
\item\label{IT2i}
Consider the space of continuous maps $C(X,\mathbb S^1)$, $X$ a compact ANR, with the compact open topology.
This topology is induced from the metric $D(f,g):=\sup_{x\in X}d(f(x),g(x))$ with $d(\theta_1,\theta_2)$   
given by $d(\theta_1,\theta_2)=\inf(|\theta_1-\theta_2|,2\pi-|\theta_1-\theta_2|)$, $0\leq\theta_1,\theta_2<2\pi$.
Equipped with this metric $(C(X,\mathbb S^1),D)$ is a complete metric space.

Recall that the set of connected components of the space $C(X,\mathbb S^1)$ identifies to $H^1(X;\mathbb Z)$. 
Denote by $C_\xi(X,\mathbb S^1)$ the connected component corresponding to the class $\xi\in H^1(X;\mathbb Z)$ and by $C_{\xi,t}(X,\mathbb S^1)$ the subset of tame maps in this connected component equipped with the induced topology.

\item\label{IT2ii}  
Observe that if $f,g$ are in a connected component $C_\xi(X,\mathbb S^1)$ of $C(X,\mathbb S^1)$ and $D(f,g)<\pi$ then for any $t\in [0,1]$ 
the map $h_t:=h_t(f,g) \in C(X;\mathbb S^1)$  
defined below  lies in the connected component of $C_\xi(X,\mathbb S^1).$ Moreover for any $0=t_0<t_1<\cdots<t_{N-1}<t_N=1$ one has 
\begin{equation}\label{E0}
D(f,g)=\sum_{0\leq i<N} D(h_{t_{i+1}},h_{t_i}).
\end{equation} 
Considering the inclusion of $\mathbb S^1\subset \mathbb R^2$ as the unit circle centered at origin, if one regards $f$ and $g$ as $\mathbb R^2$-valued maps, the map $h_t$ is defined by  
\begin{equation*}
h_t(x)=\frac{t f(x) + (1-t) g(x)}{\|t f(x) + (1-t) g(x)\|}.
\end{equation*}

\item\label{IT2iii} 
Recall that $f$ is a p.l. map on $X$ if with respect to some subdivision is simplicial (i.e.\ the liftings to $\mathbb R$ of the restriction of $f$ to simplexes of the subdivision are linear) 
and for any two p.l.\ maps $f,g$ there exists a common subdivision of $X$ which makes $f$ and $g$ simultaneously simplicial, hence any $h_t$ is a simplicial map.

If $X$ is a simplicial complex and $\mathcal U\subset C_\xi(X,\mathbb S^1)$ denotes the subset of p.l. maps then: 
\begin{enumerate}[(1)]
\item\label{Ipl1}
$\mathcal U$ is a dense subset in $C_\xi(X,\mathbb S^1)$.
\item\label{Ipl2}
$f,g\in \mathcal U$ implies $h_t \in \mathcal U$ hence  $\epsilon(h_t)>0$ hence for any $t\in[0,1]$ there exists $o(t)>0$ so that $|t'-t|< o(t)$ implies $D(h_{t'}, h_t)<\epsilon (h_t)/6$.
\end{enumerate}

Indeed 
(1) follows from approximability of continuous maps by p.l.\ maps and 
(2) from the continuity in $t$ of the family $h_t$ and the compacity of $X$. 

\item\label{IT2iv}
For $k$ a positive integer consider $S^k\mathbb T=(\mathbb T\times\cdots\times\mathbb T)/\Sigma_k$, with $\Sigma_k$  the $k$-symmetric group acting on the $k$-fold cartesian product of $\mathbb T$ 
by permutations equipped  with the metric $\underline D$ induced from the complete metric on $\mathbb T=\mathbb R^2/\mathbb Z$. 
With this metric $(S^N(\mathbb T),\underline D)$ is a complete metric space.

\item\label{IT2v}
Proposition~\ref{P55} states that $f,g\in C(X,\mathbb S^1)_{t, \xi}$ and $D(f,g) <\epsilon(f)/6$ implies
\begin{equation}\label{E41}
\underline D(\delta^f_r,\delta^g_r)<2D(f,g).
\end{equation} 
\end{enumerate}
\vskip .1in

\noindent {\bf Proof of Theorem \ref{T2}:}

Observation~\itemref{IT2v} makes 
the assignment 
$C(X,\mathbb S^1)_{t, \xi}\ni f\mapsto\delta^f_r\in S^{\beta^N_r(X,\xi)} (\mathbb T)$ a continuous map.  
 
In order to conclude the existence of a continuous extension of $\delta^f_r$ to the entire $C_\xi(X,\mathbb S^1)$, in view of 
the completeness of the metrics $D,$ and $\underline D,$ stated in observations~\itemref {IT2i} and  \itemref{IT2iv} above,  
it suffices to show that for a Cauchy sequence $\{f_\alpha\}$, $f_\alpha\in \mathcal U$, the sequence $\delta_r^{f_\alpha}$ is a Cauchy sequence in $S^{\beta^N_r(X,\xi)}(\mathbb T).$ 
This will follow once we can show that \eqref{E41} holds for  any two $f,g\in\mathcal U$ with $d(f,g)<\pi$. 
 To establish this we proceed as in \cite[Section~3.3] {CEH07}.

Start with $f,g\in \mathcal U$ with $D(f,g)<\pi$ and consider $h_t,$ $t\in [0,1]$ defined in \itemref{IT2ii} above.
Choose a finite sequence $0=t_0<t_2<t_4<\cdots<t_{2L-2}<t_{2L}=1,$ $L$positive integer, so that the open intervals 
$I_{2i}=(t_{2i}-o(t_{2i}),t_{2i}+o(t_{2i}))$ cover $[0,1]$ with $o(t)$ from item~\itemref {IT2iii} (2). 
The compacity of $[0,1]$ makes such choice possible.

By possibly removing some of the points $t_{2i}$ and decreasing $o(t_{2i})$ if necessary one can make $I_{2i}\cap I_{2i+2}\ne\emptyset$ and $t_{2t_-2},t_{2i+2}\notin I_{2i}$.
Choose $t_1<t_3<\cdots<t_{2L-1}$ with $t_{2i}<t_{2i+1}<t_{2i}$ and $t_{2i+1}\in I_{2i}\cap I_{2i+2}$.
We have then $|t_{2i+1}-t_{2i}|<o(t_{2i})$ and $|t_{2i+2}-t_{2i+1}|<o(t_{2i+2})$.

In view of item~\itemref{IT2iii}, $|t_{2i+1}-t_{2i}|< o(t_{2i})$ implies 
$$D(h_{t_{2i}},h_{t_{2i+1}})<\epsilon(h_{t_{2i}})/3$$  
and $|t_{2i+2}-t_{2i+1}|<o(t_{2i+2})$ implies $$D(h_{t_{2i+2}},h_{t_{2i+1}})<\epsilon(h_{t_{2i+2}})/6.$$ 
In view of item~\itemref{IT2v} the last inequalities imply $$\underline D(\delta^{h_{t_{2i+1}}}_r,\delta^{h_{t_{2i}}}_r) < 2 D(h_{t_{2i}},h_{t_{2i+1}})$$ 
as well as
$$\underline D(\delta^{h_{t_{2i+2}}}_r, \delta^{h_{t_{2i+1}}}_r) < 2 D(h_{t_{2i+2}},h_{t_{2i+1}}).$$  
Therefore,  
for any $0\leq k\leq {2L-1}$ one has $\underline D(\delta^{h_{t_{k+1}}}_r, \delta^{h_{t_{k}}}_r) < 2 D(h_{t_{k+1}},h_{t_k})$.
Then by \eqref{E41} and  \eqref{E0} cf item (ii), one obtains  
$$
\underline D(\delta^f, \delta^g) \leq \sum_{0\leq i<2L-1} \underline D(\delta^{h_{t_{i+1}}}, \delta^{h_{t_i}})  \leq 2\sum_{0\leq i<2L-1}  D(h_{t_{i+1}}, h_{t_i}) = D(f,g).
$$ 
This finishes the proof of Theorem~\ref{T2}.

\section{Poincar\'e duality for configurations $C_r(f)$. Proof of Theorem~\ref{T3}}\label{SS6}

For an $n$-dimensional manifold, not necessarily compact, Poincar\'e duality can be better formulated using Borel--Moore homology, 
cf.~\cite{BM}, especially tailored for locally compact spaces $Y$ and pairs $(Y,K)$, $K$ closed subset of $Y$. 
Borel--Moore homology coincides with the standard homology when $Y$ is compact. 
In general, for a locally compact space $Y$, it can be described as the inverse limit of the homology vector spaces $H_r(Y,Y\setminus U)$ for all $U$ open sets with compact closure.

One denotes by $H^{\BM}_r(\cdots)$ the Borel--Moore homology in dimension $r.$ 
For $Y$ an $n$-dimensional topological $\kappa$-orientable manifold, $g\colon Y\to \mathbb R$ a tame map, hence a proper continuous map, 
and $a$ a regular value of $g$,\footnote{i.e.\ $f\colon f^{-1}(a-\epsilon, a+\epsilon)\to (a-\epsilon, a+\epsilon)$ is a fibration}  
Poincar\'e duality provides the commutative diagrams
\begin{equation}\label{OD2}
\vcenter{
\xymatrix{
H^{\BM}_r(Y_a)         \ar[d]\ar[r] & H^{\BM}_r(Y)      \ar[d]\ar[r] & H^{\BM}_r(Y, Y_a)   \ar[d] \\
H^{n-r}(Y,Y^a)         \ar[d]\ar[r] & H^{n-r}(Y)        \ar[d]\ar[r] & H^{n-r}(Y^a)        \ar[d] \\
(H_{n-r}(Y,Y^a))^\ast  \ar[r]       & (H_{n-r}(Y))^\ast \ar[r]       & (H_{n-r}(Y^a))^\ast 
}}
\end{equation}
and
\begin{equation}\label {OD2'}
\vcenter{
\xymatrix{
H^{\BM}_r(Y^a)\ar[d]  \ar[r] & H^{\BM}_r(Y)       \ar[d]\ar[r] & H^{\BM}_r(Y,Y^a)     \ar[d] \\
H^{n-r}(Y,Y_a)\ar[d]  \ar[r] & H^{n-r}(Y)         \ar[d]\ar[r] & H^{n-r}(Y_a)         \ar[d] \\
(H_{n-r}(Y,Y_a))^\ast \ar[r] & (H_{n-r}(Y))^\ast  \ar[r]       & (H_{n-r}(Y_a))^\ast.
}}
\end{equation}
Recall that $Y_a= f^{-1}((-\infty, a])$ and $Y^a= f^{-1} ([a, \infty)).$
The first vertical arrow in each column of the diagrams~\eqref{OD2} and \eqref{OD2'} is the Poincar\'e duality isomorphism, 
the second is the isomorphism between cohomology and the dual of homology with coefficients in a field.  
The horizontal arrows are induced by the inclusions of $Y_a$ or $Y^a$ in $Y$ and the inclusion of the pairs $(Y,\emptyset)$ in $(Y, Y_a)$ or $(Y,Y^a)$.

We apply diagrams~\eqref{OD2} and \eqref {OD2'} to $Y=\tilde M$ and $g=\tilde f$, where $\tilde M$ is an infinite cyclic cover of $M$ defined by $f\colon M\to \mathbb S^1,$ a tame map, $M$ a closed $\kappa$-orientable topological manifold, 
and $\tilde f\colon\tilde M\to \mathbb R$ the lift of $f$ to $\tilde M.$ One obtains  the commutative diagrams
\begin{equation}\label{PD1}
\vcenter{
\xymatrix{
H^{\BM}_r(\tilde M_a)               \ar[d]\ar[rr]^{i^{\BM}_a(r)} && H^{\BM}_r(\tilde M)      \ar[d]\ar[rr]^{j^{\BM}_a(r)} && H^{\BM}_r(\tilde M,\tilde M_a) \ar[d] \\
H^{n-r}(\tilde M,\tilde M^a)        \ar[d]\ar[rr]^{s^a(n-r)}     && H^{n-r}(\tilde M)        \ar[d]\ar[rr]^{r^a(n-r)}     && H^{n-r}(\tilde M^a)            \ar[d] \\
(H_{n-r}(\tilde M,\tilde M^a))^\ast \ar[rr]^{(j^a(n-r))^\ast}    && (H_{n-r}(\tilde M))^\ast \ar[rr]^{(i^a(n-r))^\ast}    && (H_{n-r}(\tilde M^a))^\ast
}}
\end{equation}
and 
\begin{equation}\label{PD1'}
\vcenter{
\xymatrix{
H^{\BM}_r(\tilde M^b)               \ar[d]\ar[rr]^{i^{\BM,b}(r)} && H^{\BM}_r(\tilde M)      \ar[d]\ar[rr]^{j^{\BM,b}(r)} && H^{\BM}_r(\tilde M,\tilde M^b)\ar[d] \\
H^{n-r}(\tilde M,\tilde M_b)        \ar[d]\ar[rr]^{s_b(n-r)}     && H^{n-r}(\tilde M)        \ar[d]\ar[rr]^{r_b(n-r)}     && H^{n-r}(\tilde M_b)\ar[d] \\
(H_{n-r}(\tilde M,\tilde M_b))^\ast \ar[rr]^{(j_b(n-r))^\ast}    && (H_{n-r}(\tilde M))^\ast \ar[rr]^{(i_b(n-r))^\ast}    && (H_{n-r}(\tilde M_b))^\ast.
}}
\end{equation}

For $\tilde M$, $\tilde M_a$, and $\tilde M^a$ the Borel--Moore homology can be described as the following inverse limits:
\begin{equation}\label{E50}
\begin{aligned}
H^{\BM}_r(\tilde M)            & = \varprojlim_{0< l\to \infty} H_r(\tilde M, \tilde M_{-l}\sqcup \tilde M^l),\\
H^{\BM}_r(\tilde M_a)          & = \varprojlim_{0< l\to \infty} H_r(\tilde M_a, \tilde M_{a-l}),\\
H^{\BM}_r(\tilde M^a)          & = \varprojlim_{0< l\to \infty} H_r(\tilde M^a, \tilde M^{a+l}),\\
\end{aligned}
\end{equation}
The inclusions of pairs $(\tilde M,\tilde M_{-l'}\sqcup \tilde M^{l'})\subseteq (\tilde M, \tilde M_{-l}\sqcup \tilde M^l)$ for $l'>l$ 
induce in homology an inverse system whose limit is $H^{\BM}_r(\tilde M)$.
Similar inclusions of pairs associated with $l'>l$ induce inverse systems whose limits are the remaining Borel--Moore homology vector spaces considered above.

The horizontal arrows in both diagrams are inclusion induced linear maps in Borel-Moore homology, cohomology and homology.

In view of the use of Borel--Moore homology, in addition to $\mathbb I^{\tilde f}_a(r)$ and $\mathbb I_{\tilde f}^a(r)$, one considers 
\begin{align*}
\mathbb I^{\BM,\tilde f}_a(r)   &= \img\bigl(H^{\BM}_r(\tilde M_a) \to H^{\BM}_r(\tilde M)\bigr),
\\
\mathbb I^{\BM,a}_{\tilde f}(r) &= \img\bigl(H^{\BM}_r(\tilde M^a) \to H^{\BM}_r(\tilde M)\bigr),
\end{align*} 
and $F^{\BM,f}_r(a,b)= \dim\bigl(\mathbb I^{\BM,\tilde f}_a(r) \cap \mathbb I^{\BM,b}_{\tilde f}(r)\bigr)$.
\vskip .2in

The first step in the proof of $Theorem \ref{T3}$ is the verification of the equality
$$F^{BM,\tilde f}_r(a,b)=  G^f_{n-r}(b,a),$$ the second the verification of the equality 
$$F^{BM,\tilde f}_r(a,b)+ \sharp \tilde {\mathcal J}_r(f)= F^{\tilde f}_r(a,b)$$ and the third step  the verification of the equality
$$\delta^{\tilde f}_{n-r} (b,a)= \delta^{\tilde f}_r(a,b).$$
In view of the definition of $\delta^f_r (\langle a, b\rangle)$, cf. (\ref{EQQ}), the last equality implies $\delta^f_r(\langle a, b\rangle)= \delta^f_{n-r}(\langle b, a\rangle)$ hence Theorem \ref{T3}.
 \vskip .1in
 
 {\bf STEP 1:}

Recall that if $\alpha'\colon A'\to B$ and $\alpha''\colon A''\to B$ are linear maps, one writes $\alpha'+\alpha''$ for the linear map  $\alpha'+\alpha''\colon A'\oplus A''\to B$ defined by
$$(\alpha'+\alpha'')(a',a''):= \alpha(a') + \alpha''(a''),
$$ 
and if $\beta'\colon A\to B'$ and $\beta''\colon A\to B''$ are linear maps, one writes $(\beta', \beta'')$ for the linear map 
$
(\beta', \beta'')\colon A\to B'\oplus B''$ defined by $$(\beta', \beta'')(a):=\bigl(\beta'(a),\beta''(a)\bigr).
$$

One has the canonical isomorphisms 
\begin{equation}\label {E58}
\begin{aligned}
\ker (\beta', \beta'')^\ast \simeq  \coker ((\beta')^\ast +(\beta'')^\ast)\\
\ker (\alpha'+\alpha'')^\ast\simeq \coker ((\alpha')^\ast , (\alpha'')^\ast).
\end{aligned}
\end{equation}

The exact sequences in Borel--Moore homology of the pairs $(\tilde M, \tilde M_a)$ and $(\tilde M, \tilde M^b)$, 
which are the top horizontal rows of the two diagrams \eqref{PD1} and \eqref{PD1'}, imply  
\begin{equation}\label {E59} 
F_r^{\BM,\tilde f}(a,b)
=\dim\bigl(\mathbb I^{\BM,\tilde f}_a(r) \cap \mathbb I^{\BM,b}_{\tilde f}(r)\bigr)
=\dim\ker\bigl(j^{\BM}_a(r), j^{\BM,b}(r)\bigr).
\end{equation} 
Looking to the right side corners of the diagrams \eqref{PD1} and \eqref{PD1'} one concludes
\begin{equation}\label {EQ2} 
\ker\bigl(j^{\BM}_a(r), j^{\BM,b}(r)\bigr)\equiv \ker\bigl(r^a(n-r), r_b(n-r)\bigr).
\end{equation}
In view of the canonical isomorphism between cohomology vector space  and the  dual of homology vector space one obtains: 
\begin{equation}\label {EQ3} 
\ker\bigl(r^a(n-r),r_b(n-r)\bigr)\equiv\bigl(\coker\bigl(i^a(n - r) + i_b(n -r)\bigr)\bigr)^\ast.
\end{equation}

 Observe that:
\begin{enumerate}
\item 
In view of the finite dimensionality of $G^{\tilde f}(a,b)$ one has

$\dim G^{\tilde f}_{n-r}(b,a)= \dim G^{\tilde f}_{n-r}(b,a)^\ast.$
\item  
In view of the definition of $G^f_{n-r}$ one has 

$\dim G^{\tilde f}_{n-r}(b,a)^\ast= 
\dim\coker\bigl(i_b(n-r)+i^a(n-r)\bigr)^*.$ 
\item 
In view of (\ref{E58}) one has 

$\coker\bigl(i_b(n-r)+i^a(n-r)\bigr)^*
=\dim\bigl(\ker\bigl(i_b(n-r)^\ast+i^a(n-r)*\bigr)\bigr)^\ast$ 
\item  In view of (\ref {EQ2}) and (\ref{EQ3}) 
one has 

$\dim \coker\bigl(i_b(n-r)+i^a(n-r)\bigr)^* = 
\dim\bigl(\ker\bigl(j^{BM,b}(r), j^{BM}_a(r)).$

\item In view of (\ref{E59}) one has 

$\dim \bigl(\ker\bigl(j^{BM,b}(r), j^{BM}_a(r))= F^{BM,f}_r(a,b).$ 
\end{enumerate}

Consequently, $F^{\BM, \tilde f}_r(a,b)= G_{n-r}^{\tilde f}(b,a)$. 
\vskip .1in
{\bf STEP 2:}

We first provide below the description of the Borel--Moore homologies 
considered above in terms of subsets of 
$\tilde{\mathcal B}_r(f)\sqcup \tilde{\mathcal B}_{r-1}(f)\sqcup \tilde {\mathcal J}_r(f)\sqcup \tilde {\mathcal J}_{r-1}(f).$ This is a little more than we need but is useful for future references.

For $\alpha<\beta,$ we use the notations
\begin{equation}\label {E00}
\begin{aligned}
i_{\alpha,\beta}(r): &H_r(\tilde M_\alpha)\to H_r(\tilde M_\beta)\\
i^{\alpha,\beta}(r): &H_r(\tilde M^\beta)\to H_r(\tilde M^\alpha)\\
i_\alpha ^\beta (r): &H_r(\tilde M_\alpha\sqcup \tilde M^\beta)\to H_r(\tilde M)
\end{aligned}
\end{equation}
for the linear maps induced by the inclusions 
$\tilde M_\alpha \subseteq \tilde M_\beta,$
$\tilde M^\beta \subseteq \tilde M^\alpha$ and
$\tilde M_\alpha \cup \tilde M^\beta \subseteq\tilde M.$

We begin by considering the commutative diagram (\ref {ED64}) below whose rows are the long exact sequences of the pairs 
$(\tilde M_a, \tilde M_{-l})$, $(\tilde M,\tilde M_{-l}\sqcup\tilde M^l)$, $(\tilde M^b,\tilde M^l)$ for $-l<a$ and $b<l$ and vertical arrows 
induced by the inclusions of pairs 
$$
(\tilde M_a, \tilde M_{-l})\subset  (\tilde M,\tilde M_{-l}\sqcup\tilde M^l)\supset (\tilde M^b,\tilde M^l).
$$

\begin{equation}\label{ED64}
{\small\xymatrix{
\cdots \ar[r]& H_r(\tilde M_{-l})\ar[r]^{i_{-l,a}(r)}\ar[d]&H_r(\tilde M_{a}\ar[r])\ar[d]&H_r(\tilde M_{a},\tilde M_{-l})\ar[r]\ar[d]&H_{r-1}(\tilde M_{-l})\ar[d]\ar[r] &\cdots\\
\cdots \ar[r]& H_r(\tilde M_{-l}\sqcup \tilde M^l)\ar[r]^-{i_{-l}^l(r)}&H_r(\tilde M)\ar[r]&H_r(\tilde M, \tilde M_{-l}\sqcup  \tilde M^l)\ar[r]&H_{r-1}(\tilde M_{-l}\sqcup\tilde M^l)\ar[r]&\cdots\\
\cdots  \ar[r]
&H_r(\tilde M^{l})\ar[r]^{i^{b,l}(r)}\ar[u]&H_r(\tilde M^b)\ar[r]\ar[u]&H_r(\tilde M^b,\tilde M^{l})\ar[r]\ar[u]&H_{r-1}(\tilde M^{l})\ar[u]\ar[r]&\cdots\\
}}.
\end{equation}


The diagram \eqref {ED64} leads to the following commutative diagram whose rows are short exact sequences.
\begin{equation}\label {ED65}
\xymatrix{
0\ar[r]&\coker (i_{ -l, a}(r))\ar[r]\ar[d]&H_r(\tilde M_a, \tilde M_{-l})\ar[r]\ar[d]&\ker(i_{-l,  a}(r-1))\ar[r]\ar[d]&0\\
0\ar[r]&\coker (i_{-l}^l(r))\ar[r]&H_r(\tilde M,\tilde M_{-l}\sqcup\tilde M^l)\ar[r]&\ker(i_{-l}^l(r-1))\ar[r]&0\\
0\ar[r]&\coker (i^{b,l}(r))\ar[r]\ar[u]&H_r(\tilde M^b, \tilde M^l)\ar[r]\ar[u]&\ker(i^{b,l}(r-1))\ar[r]\ar[u]&0\\
}.
\end{equation}

Note that there exists compatible linear maps induced by inclusions when passing from the diagram corresponding to $(l', a', b')$ to the diagram corresponding to $(l, a,b)$ when $l'\geq l$, $a'\geq a$, $b'\leq b$.  
Note also that for $M$ compact and $f$ tame the set of bar codes $\mathcal B_r(f)$ is finite and therefore there is a maximal length of all bar codes say ${L(f)}$.

Proposition  \ref{P36} item (b) implies on the nose that following calculations.   

\begin{proposition}\label{P62}
Let $a,b$ fixed and suppose $l$ satisfies $a>-l, b<l$.  Then 
\begin{enumerate}[(a)]
\item $\coker(i_{ -l, a}(r))=\kappa[\mathcal M_{-l,a}(r)]$ with 
\begin{align*}
\mathcal M_{-l,a}(r)
&:=\{[\alpha,\beta]\in \mathcal B^c_r \mid -l <\alpha \leq a\}
\\&\qquad\cup\{(\alpha,\beta)\in \mathcal B^o_{r-1} \mid -l <\beta  \leq  a\}
\\&\qquad\cup\{[\alpha,\beta)\in \mathcal B^{co}_r \mid   -l <\alpha \leq a <  \beta \}.
\end{align*}
\item $\ker(i_{ -l,a}(r))=\kappa[\mathcal N_{-l,a}(r)]$ with 
\begin{equation*}
\mathcal N_{-l,a}(r):=\{[\alpha,\beta)\in \mathcal B^{co}_r \mid  \alpha \leq  -l <\beta \leq a\}.
\end{equation*}
\item $\coker(i^{b,l}(r))=\kappa[\mathcal M^{b,l}(r)]$ with 
\begin{align*}
\mathcal M^{b,l}(r)
&:=\{[\alpha,\beta]\in \mathcal B^c_r \mid  b \leq \beta <l  \} 
\\&\qquad\cup\{(\alpha,\beta)\in \mathcal B^o_{r-1} \mid b \leq \alpha <l\}
\\&\qquad\cup\{(\alpha,\beta]\in \mathcal B^{oc}_r \mid  \alpha< b \leq \beta< l\}.
\end{align*}
\item $\ker(i^{b,l}(r))=\kappa[\mathcal N^{b,l}(r)]$ with 
\begin{equation*}
\mathcal N^{b,l}(r):=\{(\alpha,\beta] \in \mathcal B^{oc}_r \mid  b \leq \alpha < l \leq \beta\}.
\end{equation*}
\end{enumerate}
If $2l>{L(f)}$ then: 
\begin{enumerate}[(a)]
\setcounter{enumi}{4}
\item $\coker(i_{-l}^l(r))=\kappa[\mathcal M_{-l}^l(r)]$ with 
\begin{align*}
\mathcal M_{-l}^l(r)
&:=\{[\alpha,\beta]\in \mathcal B^c_r \mid  [\alpha, \beta]\subset  (-l,l)\} 
\\&\qquad\cup\{(\alpha,\beta)\in \mathcal B^o_{r-1} \mid \alpha <l, \beta> -l\}.
\end{align*}
\item $\ker(i_{-l}^l(r))=\kappa[\mathcal N_{-l}^l(r)\sqcup \tilde J_r(f)]$ with\footnote{In view of the hypothesis $(a,b)$ can not contain both $-l$ and $l$.}
\begin{align*}
\mathcal N_{-l}^l(r)
&:=\{[\alpha,\beta)\in \mathcal B^{co}_r \mid  (\alpha ,\beta)\ni -l\}
\\&\qquad\cup\{(\alpha,\beta]\in \mathcal B^{oc}_r \mid  (\alpha ,\beta)\ni l\}.
\end{align*}
\end{enumerate}
\end{proposition}

Clearly  for $l'>l$ and  $l'-l >{L(f)}$ one has:
\begin{align*}
\mathcal M_{ -l',a}(r)&\supseteq \mathcal M_{-l,a}(r),
&\mathcal N_{-l',a}(r)&\cap \mathcal N_{-l,a}(r)= \emptyset,
\\
\mathcal M^{b,l'}(r) &\supseteq \mathcal M^{b,l}(r),
&\mathcal N^{b,l'}(r) &\cap \mathcal N^{b,l}(r)=\emptyset,
\\
\mathcal M_{-l'}^{l'}(r)&\supseteq \mathcal M_{-l}^l(r),
&\mathcal N_{-l'}^{l'}(r) &\cap \mathcal N_{-l}^l(r)=\emptyset.
\end{align*}

Note that the sets $\mathcal M^-_-(r)$, $\mathcal N^-_-(r)$, $\tilde{\mathcal J}_r$, and $\tilde {\mathcal J}_{r-1}$ are all subsets of 
$S=\tilde {\mathcal B}_r \sqcup \tilde {\mathcal B}_{r-1} \sqcup \tilde {\mathcal J}_r \sqcup \tilde{\mathcal J}_{r-1}.$
Note also that  all inclusions induced linear maps between the homologies involved in the diagrams (\ref{ED64}) and (\ref{ED65}),  via the identifications of these homologies to 
vector spaces generated by subsets of $S,$ correspond to canonical linear maps.  

Recall that if $S_1, S_2\subseteq S$ then the canonical linear map $\kappa[S_1]\to \kappa[S_2]$ is the unique linear extension of the map which the identity on $S_1\cap S_2$ and zero on $S_1\setminus S_2,$ cf.~Definition~\ref{D26}. 

To finalize the verification of Step 2 
for $\alpha, \beta \in \mathbb R$  one denotes
\begin{align*}
\mathcal B^c_{r,\alpha}  &:=\tilde {\mathcal B}^c_{r,(-\infty, \alpha]}(f)=\{I\in \mathcal B^c_r(\tilde f) \mid\textrm{$I\cap (-\infty,\alpha]\ne\emptyset$}\},
\\
\mathcal B^{c,\beta}_{r} &:=\tilde {\mathcal B}^c_{ r, [\beta, \infty)}(f)=\{I\in \mathcal B^c_r(\tilde f) \mid\textrm{$I\cap [\beta,\infty)\ne\emptyset$}\},
\\
\mathcal B^o_{r,\alpha}  &:=\tilde {\mathcal B}^o_{r, (-\infty,\alpha]}(f)=\{I\in \mathcal B^o_r (\tilde f)\mid\textrm{$I\subset (-\infty,\alpha]$} \},
\\
\mathcal B^{o,\beta}_{r} &:=\tilde {\mathcal B}^{o}_{r, [\beta,\infty)}(f)=\{I\in \mathcal B^o_r (\tilde f)\mid\textrm{$I\subset [\beta,\infty)$}\}.
\end{align*}
 and one considers the commutative diagram~\eqref{DD1} below. 
\begin{equation}\label {DD1}
{\small\xymatrix{
\mathbb I^{\tilde f}_\alpha(r)\ar[d]& \kappa[\mathcal B^c_{r,\alpha}\sqcup \mathcal B^o_{r-1,\alpha}\sqcup\tilde{\mathcal J}_r(f)]\ar[d]^{v_\alpha}\ar[l]_-{\tilde\omega_{r,(-\infty, \alpha]}}\ar[r]^{\pi_\alpha} &\kappa[[\mathcal B^c_{r,\alpha}\sqcup \mathcal B^o_{r-1,\alpha}]]\ar[d]^{v_\alpha} \ar[r]^-{\omega^{\BM}_{r,(-\infty,\alpha]}} & \mathbb I^{\BM,\tilde f}_\alpha(r)\ar[d]\\
H_r(\tilde M)& \kappa[\mathcal B^c_r\sqcup \mathcal B^o_{r-1}\sqcup\tilde{\mathcal J}_r(f)]\ar[l]_-{\tilde \omega_r}\ar[r]^{\pi} &\kappa[[\mathcal B^c_r\sqcup \mathcal B^o_{r-1}
]]\ar[r]^-{\tilde \omega^{\BM}_r}  & H_r^{\BM}(\tilde M)\\
\mathbb I_{\tilde f}^\beta(r)\ar[u]& \kappa[\mathcal B^{c,\beta}_r\sqcup \mathcal B^{o,\beta}_{r-1}\sqcup\tilde{\mathcal J}_r(f)]\ar[u]_{v^\beta}\ar[l]_-{\tilde\omega_{r, [\beta, \infty)}}\ar[r]^{\pi^\beta} &\kappa[[\mathcal B^{c,\beta}_r\sqcup \mathcal B^{o,\beta}_{r-1}]]\ar[u]^{\pi_\beta}\ar[r]^{\tilde\omega^{\BM}_{r,[\beta, \infty)}}  & \mathbb I_{\tilde f}^{\BM,\beta}(r)\ar[u]\\
}}
\end{equation}

Propositions~\ref{P36} (b) 
provides   the left  side of the diagram 
with $\tilde\omega_{r,(-\infty, \alpha]}$, $\tilde\omega_{r}$, $\tilde\omega_{r, [\beta, \infty)}$ isomorphisms. 

By passing to the inverse limit when $l\to \infty,$ Proposition (\ref{P62}) provides the right side of the diagram 
with $\tilde\omega^\BM_{r,(-\infty, \alpha]}$, $\tilde\omega^\BM_{r}$, and $\tilde\omega^\BM_{r,[\beta, \infty)}$ isomorphisms.  

The canonical linear maps $\pi_\alpha$, $\pi$, $\pi^\beta$,$v_\alpha$ and $v^\beta$ defined by the sets in brackets when regarded as subsets in 
$\tilde {\mathcal B}_r(f)\sqcup \tilde {\mathcal B}_{r-1}(f) \sqcup \tilde {\mathcal J}_r(f) \sqcup  \tilde {\mathcal J}_{r-1}(f)$ (cf.\ Definition~\ref{D26})  provide the middle of the diagram. 

%
%
%

%

Diagram~\eqref{DD1} implies that  $\mathbb I^{\tilde f}_\alpha (r)\cap \mathbb I^{\beta}_{\tilde f}(r)$  identifies to 
$$
\kappa [ (\tilde {\mathcal B}^c_{r,\alpha} \cap \tilde {\mathcal B}^{c,\beta}_r) \sqcup (\tilde {\mathcal B}^o_{r-1,\alpha} \cap \tilde {\mathcal B}^{o,\beta}_{r-1})\sqcup \tilde {\mathcal J} _r(f)]
$$ 
and 
$\mathbb I^{\BM, \tilde f}_\alpha (r)\cap \mathbb I^{\BM, \beta}_{\tilde f}(r)$ identifies to 
$$
\kappa [[ (\tilde {\mathcal B}^c_{r,\alpha} \cap \tilde {\mathcal B}^{c,\beta}_r) \sqcup (\tilde {\mathcal B}^o_{r-1,\alpha} \cap \tilde {\mathcal B}^{o,\beta}_{r-1})]].
$$ 
which in view of the fact that the set in brackets is finite (hence there is no difference between $\kappa[[\cdots]]$ and $\kappa[\cdots]$) imply the exactness of the sequence  

$$
0 \to \kappa [\tilde {\mathcal J}_r(f)] \to \mathbb I^{\tilde f}_\alpha (r)\cap \mathbb I^{\beta}_{\tilde f} (r) \to \mathbb I^{\BM, \tilde f}_\alpha (r)\cap \mathbb I^{\BM, \beta}_{\tilde f}(r)\to 0
.$$  
 Note that $\mathbb I^{\tilde f}_\alpha (r)\cap \mathbb I^{\beta}_{\tilde f} (r) \to \mathbb I^{\BM, \tilde f}_\alpha (r)\cap \mathbb I^{\BM, \beta}_{\tilde f}(r)$ 
is 
the composition $\tilde \omega^{\BM}_r \cdot \pi \cdot  \tilde \omega^{-1}_r.$  The exact sequence above implies 
$$F^{BM,\tilde f}_r(a,b)+ \sharp \tilde {\mathcal J}_r(f)= F^{\tilde f}_r(a,b)$$
which  finalizes Step 2.

 As a side remark note that by passing to the inverse limit when $l\to \infty$ Proposition \ref {P62} and Diagram (\ref {ED64}) led to the following calculations used in the next section. 
\begin{proposition}\label{P633}\
\begin{enumerate}

\item  $H^{\BM}_r(\tilde X) = \kappa[[\tilde S_r (f)\sqcup \tilde {\mathcal J}_{r-1}(f)]]= \kappa[[\tilde {\mathcal B}^c_r(f)\sqcup \tilde{\mathcal B} ^o_{r-1}(f)\sqcup \tilde {\mathcal J}_{r-1}(f)]]$.
\item  $\mathbb I^{\BM, \tilde f}_a(r)=\kappa[[\tilde S_{r, (-\infty, a]}(f)]]= \kappa [[\tilde {\mathcal B}^c_{r, (-\infty,a]}(f)\sqcup \tilde {\mathcal B }^o_{r-1, (-\infty,a]}(f)]]$.
\item  $H^{\BM}_r(\tilde X_a) = \kappa[[\tilde S '_{r, (-\infty, a]}(f)]]= \kappa[[\tilde {\mathcal B}^{co}_{r, (-\infty, a]}\sqcup \tilde S_{r, (-\infty, a]}(f)]]$. 
\item  $\mathbb I^{\BM, b}_{\tilde f}(r)=\kappa[[\tilde S_{r, [b, \infty)}(f)]]= \kappa [[\tilde {\mathcal B}^c_{r, ([b,\infty)}(f)\sqcup\tilde {\mathcal B }^o_{r-1, [b,\infty)}(f)]]$.
\item  $H^{\BM}_r(\tilde X^b) = \kappa[[\tilde S'_{r, (b,\infty)}(f)]]= \kappa[[ \tilde {\mathcal B}^{oc}_{r, [b,\infty)}\sqcup \tilde S_{r, (-\infty, a]}(f)]]$. 
\end{enumerate} 
\end{proposition}
 
 The canonical linear map $H_r(\tilde X) \to H_r^{BM(}(\tilde X)$ 
 can be also read off from Diagram~\eqref{DD1} in terms of barcodes. It is exactly the unique linear map which is identity on $\mathcal B^c_r\sqcup \mathcal B_{r-1}^o$ and zero on $\widetilde {\mathcal J}_r(f).$ 
 \vskip .1in
  
{\bf STEP 3:} 

In view of the fact that by $\delta^{\tilde f}_r(a,b)= \mu^{\tilde f}_r((a-\epsilon, a+\epsilon]\times [b-\epsilon, b+\epsilon)$  for $\epsilon$ small enough, cf. (\ref {EQQ2}), and 
of Proposition (\ref{P53}) 
the equality in  Step 3 follows.

\section{The mixed bar codes. Proof of Theorem~\ref{T5}}\label{SS7}

Let $f:X\to \mathbb S^1$ be a tame angle-valued map and $\tilde f:\tilde X\to \mathbb R$ be the infinite cyclic covering of $f.$The sets 
$\mathcal B_r^{co}(f)$ and $\mathcal B^{oc}_r(f)$ which define the configuration $C^m_r(f),$  cf. Section \ref{SS3}, are equivalence classes modulo translation by integer multiple of $2\pi$ of the bar codes of $\tilde {\mathcal B}_r^{co}(f)= \mathcal B_r^{co}(\tilde f)$ and $\tilde{\mathcal B}^{oc}_r(f)= \mathcal B^{oc}_r(\tilde f).$

Note also that an interval $[a,b)$ is an element in $\mathcal B^{co}_r(\tilde f)$ with multiplicity $k$ iff $(a,b)$ is an $r-$persistence bar code  in the sense of \cite {ELZ02}  with the same multiplicity for the sublevel filtration associated with $\tilde f.$ Similarly  the interval $(a,b]$ is an element in $\mathcal B^{oc}_r(\tilde f)$ with multiplicity $k$ if $(-b, -a)$ is an $r-$persistence bar code   with the same multiplicity for the sublevel-filtration associated with $-\tilde f.$ 
Recall that a point $\langle a, b\rangle\in (\mathbb R^2\setminus \Delta )/\mathbb Z$ is in the support of $C^m_r(f)$ with multiplicity $k$  iff $[a,b)\in \mathcal B^{co}_r(f)$ with multiplicity $k$ when  $a>b,$  or $(a, b]\in \mathcal B^{oc}_r(f)$ when $a<b$ with multiplicity $k.$

The stability result in \cite{CEH07} , the MAIN THEOREM, remains valid with the same proof  if instead of real-valued tame maps one considers angle-valued tame maps $f:X\to \mathbb S^1.$ 

More precisely the assignment $ f\rightsquigarrow \mathcal B_r^{co}(f),$ with  $\mathcal B_r^{co}(f)$ viewed as a multiset (= configuration of points) in $\mathbb R^2_+/ \mathbb Z\subset (\mathbb R^2\setminus \Delta) / \mathbb Z= \mathbb T\setminus \Delta_{\mathbb T}$  is continuous w.r. to the compact open topology on 
$C_{\xi,t}(X,\mathbb S^1)$ and the topology induced by the bottleneck metric on the space of configurations $\rm {Confg} \ ((\mathbb R^2_+ / \mathbb Z)\subset \rm {Confg} \ ((\mathbb R^2\setminus \Delta) / \mathbb Z).$

Similarly the assignment $f\rightsquigarrow \mathcal B_r^{oc}(f),$ with $\mathcal B_r^{oc}(f)$ viewed as a multi set in $\mathbb R^2_-  / \mathbb Z\subset (\mathbb R^2 \setminus \Delta)  / \mathbb Z,$  is continuous w.r. to the same topologies.
The spaces  $\mathbb R^2_+:= \{(x,y)\mid x<y\}$ and  $\mathbb R^2_-:= \{(x,y)\mid x<y\}$ are invariant w.r. to the action $\mathbb Z\times \mathbb R^2\to \mathbb R^2$ given by  $(n, (x,y)) \rightarrow (x+2n\pi, y +2n\pi).$ 

The bottleneck metric  defined in \cite{CEH07} induces the bottleneck topology defined without metric in Section \ref{S1}.
 In terms of the configuration $C^m_r(f)$ the stability result  in \cite{CEH07} can be reformulated in the following way.  
 
 {\bf Theorem} 
{\it The assignment $f\mapsto C^m_r(f)$  from $C_{\xi, t}(X,\mathbb S^1)$ the space of tame maps equipped with the compact open topology 
to $\mathcal\Confg(\mathbb T\setminus \Delta)$ 
equipped with the bottleneck topology for case $(X,K)= (\mathbb T, \Delta_{\mathbb T})$ as described in Section \ref{S1} is continuous. 
\footnote { A direct verification of Theorem \ref{T5} without reference to \cite{ELZ02} can be found in the forthcoming book \cite{BUR}.}} 
 
This because the bottleneck metric induces the bottleneck topology. 
\noindent This Theorem is exactly Theorem \ref{T5}.
\vskip .1in

In parallelism with the configuration $C_r(f),$ for the proof of Theorem \ref{T6} one can identify the configuration $C^m_r(f)$ with  the map $\delta^{m, f}_r: \mathbb T\setminus \Delta_{\mathbb T} \to \mathbb Z_{\geq 0}.$ This map can be derived from the function of two variables  $T^{\tilde f}_r\colon\mathbb R^2\setminus \Delta \to \mathbb Z_{\geq 0}$ 
in a similar manner to the description of the configuration $C_r(f)$ in Section~\ref{SS5}. 
The function $T^{\tilde f}_r$ is defined by: 
\begin{equation}\label{E61}
T^{\tilde f}_r(a,b):= 
\begin{cases}
\dim \ker\bigl(H_r(\tilde X_a)\to H_r(\tilde X_b)\bigr) & \textrm{if $a<b$}\\
\dim \ker\bigl(H_r(\tilde X^a)\to H_r(\tilde X^b)\bigr) & \textrm{if $a>b$.}
\end{cases}
\end{equation}
If $f$ is tame then so is $\tilde f$ and the limit 
\begin{multline}\label{EEE62}
\delta^{m,\tilde f}_r(a,b) = \lim_{\epsilon\to 0} \Bigl( T^{\tilde f}_r(a+\epsilon, b+\epsilon ) -T^{\tilde f}_r(a-\epsilon, b+\epsilon) 
\\+T^{\tilde f}_r(a-\epsilon, b-\epsilon) - T^{\tilde f}_r(a+\epsilon, b-\epsilon)\Bigr) 
\end{multline}
exists and defines the function $\delta^{m,\tilde f}_r$ which satisfies $$\delta^{m,\tilde f}_r(a, b)= \delta^{m,\tilde f}_r(a +2\pi, b+2\pi).$$ Then, as in Section~\ref{SS5}, one defines 
the function $\delta^{m, f}_r\colon\mathbb T\setminus \Delta_T \to \mathbb Z_{\geq 0}$ by 
$$\delta^{m, f}_r(\langle a, b\rangle):= \delta^{m, \tilde f}_r(a,b).$$ 

As in Section \ref{SS5} Proposition \ref{O78}, using Proposition \ref{P36} one verifies   that $\delta^{m, f}_r$ and $C^m_r(f)$ are equal. 

\begin{proposition}\label{P71'}
If $f$ is a tame real- or angle-valued map defined on $X$, a compact ANR, then $\delta^{m,f}_r$ and $C^m_r(f)$ are equal $\mathbb Z_{\geq 0}-$valued functions defined on $\mathbb R^2\setminus \Delta $ or $\mathbb T\setminus \Delta_{\mathbb T}$. 
\end{proposition}

The proof is exactly the same as of Proposition. \ref{O78} provided one replaces (\ref{EQ46}) and (\ref{EQ47}) by (\ref{EQQQ})
which follows in a straightforward manner from  Proposition  \ref{P36}. precisely  the following calculations. 
\begin{equation}\label {EQQQ}
\begin{aligned}
T^f_r(c_i, c_j)= \sharp \{I\in \mathcal B_r^{c,o} \mid I= [c_i,c_j)\} \rm {if} \ c_i< c_j\\
T^f_r(c_i, c_j)= \sharp \{I\in \mathcal B_r^{c,o} \mid I= [c_i,c_j)\} \rm {if} \ c_i> c_j .\\
\end{aligned}
\end{equation}

In order to prove  Theorem~\ref{T6}
we first recall some notations.

\noindent For $\tilde f\colon\tilde X\to \mathbb R$ the infinite cyclic covering of the tame map $f\colon X\to \mathbb S^1,$  $a, b\in \mathbb R, a\leq b$
\begin{itemize}
\item one considers the inclusion induced linear maps :
\begin{align*} 
i_a(r)\colon H_r(\tilde X_a)&\to H_r(\tilde X),&i^{\BM}_a(r)\colon H^{\BM}_r(\tilde X_a)&\to H^{\BM}_r(\tilde X),
\\ 
i^a(r)\colon H_r(\tilde X^a)&\to H_r(\tilde X),&i^{\BM,a}(r)\colon H^{\BM}_r(\tilde X^a)&\to H^{\BM}_r(\tilde X).
\end{align*}
\begin{align*}
i_{a,b}(r)\colon H_r(\tilde X_a)&\to H_r(\tilde X_b),&i^{\BM}_{a,b}(r)\colon H^{\BM}_r(\tilde X_a)&\to H^{\BM}_r(\tilde X_b),
\\
i^{b,a}(r)\colon H_r(\tilde X^b)&\to H_r(\tilde X^a),&i^{\BM,b,a}(r)\colon H^{\BM}_r(\tilde X^b)&\to H^{\BM}_r(\tilde X^a).
\end{align*}
\item one  defines
\begin{align*}
\setcounter{enumi}{4}
\mathbb K_a(r)&:=\ker i_a(r),&\mathbb K^{\BM}_a(r)&:=\ker i^{\BM}_a(r),
\\
\mathbb K^a(r)&:=\ker i^a(r),&\mathbb K^{\BM,a}(r)&:=\ker i^{\BM,a}(r),
\end{align*}
and 
\item one denotes   by 
\begin{align*}
\tilde\iota_{a,b}(r)\colon \mathbb K_a(r)&\to \mathbb K_b(r),&\tilde\iota^{\BM}_{a,b}(r)\colon\mathbb K^{\BM}_a(r)&\to \mathbb K^{BM}_b(r),
\\
\tilde\iota^{b,a}(r)\colon \mathbb K^b(r)&\to \mathbb K^a(r),& \tilde\iota^{\BM, b,a}(r):\mathbb K^{\BM,b}(r)&\to \mathbb K^{BM,a}(r)
\end{align*}
the restrictions of $i_{a,b}(r)$, $i^{\BM}_{a,b}(r)$ and $i^{b,a}(r)$, $i^{\BM, b,a}(r)$ to these kernels. 
\end{itemize}

Note that in view of the definition (\ref {E61}) 
\begin{equation}\label{EQQQQ1}
\begin{aligned}
T^f_r(a,b) =&\ker i{a,b}(r), a<b 
\\
T^f_r (a,b)= &\ker i^{a,b}(r), a>b
\end{aligned}
\end{equation}
by elementary linear algebra argument 
\begin{equation}\label{EQQQQ2}
\begin{aligned}
\ker \tilde \iota_{a,b}(r)=&\ker i_{a,b}(r)
\\
\ker \tilde \iota^{a,b}(r)=&\ker i^{a,b}(r)
\end{aligned} 
\end{equation}
and 
In view of the calculations of the Borel--Moore homology of $\tilde X^a$, $\tilde X_a$, $\tilde X$ cf Proposition \ref{P633}, and the description  of the linear maps from  homology to Borel--Moore homology 
one concludes that 

\begin{equation}\label{EQQ3}
\begin{aligned}
\mathbb K(r)&=\mathbb K^{\BM}(r)
\\
\tilde\iota(r)&=\tilde\iota^{\BM}(r)
\end{aligned} 
\end{equation}

%

Proposition~\ref{P36} permits to describe the vector spaces $\mathbb K_a(r)$, $\mathbb K^a(r)$, 
$\ker(\tilde\iota_{a,b}(r))$, $\coker(\tilde\iota_{a,b}(r))$, $\ker(\tilde\iota^{b,a}(r))$, $\coker(\tilde\iota^{b,a}(r))$ 
in terms of mixed bar codes as in Proposition \ref {P93} below. The verification is a straightforward reading of Proposition~\ref{P36}.

\begin{proposition}\label{P93}
Suppose $f\colon X\to \mathbb S^1$ is a tame map with $\tilde f\colon\tilde X\to \mathbb R$ its infinite cyclic covering,
and $a$, $b$ real numbers with $a\leq b$. Then:
\begin{align*}
\mathbb K_a(r)      &= \kappa\bigl[\{I\in \tilde{\mathcal B}^{co}_r(f) \mid I \ni a\}\bigr],
\\
\mathbb K^a(r)      &= \kappa\bigl[\{I\in \tilde{\mathcal B}^{oc}_r (f)\mid I \ni a\}\bigr],
\\
\ker\bigl(\tilde\iota_{a,b}(r)\bigr)   &= \kappa\bigl[\{I\in \tilde{\mathcal B}^{co}_r(f) \mid I \ni a, \ b\notin I\}\bigr],
\\
\coker\bigl(\tilde\iota_{a,b}(r)\bigr) &= \kappa\bigl[\{I\in \tilde{\mathcal B}^{co}_r(f) \mid I \ni b,\  a\notin I \}\bigr],
\\
\ker\bigl(\tilde\iota^{b,a}(r)\bigr)   &= \kappa\bigl[\{I\in \tilde{\mathcal B}^{oc}_r(f) \mid I \ni b,\  a\notin I\}\bigr],
\\
\coker\bigl(\tilde\iota^{b,a}(r)\bigr) &= \kappa\bigl[\{I\in \tilde{\mathcal B}^{oc}_r(f) \mid I \ni a,\  b\notin I \}\bigr].
\end{align*}
\end{proposition}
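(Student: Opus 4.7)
The plan is to deduce all four items from Proposition~\ref{P36} by expressing the half-infinite sublevel set $\tilde X_a$ and superlevel set $\tilde X^a$ as filtered unions of compact slabs and reading off the kernels and cokernels at the bar-code level.

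For item~(1), write $\tilde X_a=\bigcup_{b<a}\tilde X_{[b,a]}$ as a filtered union; since singular homology commutes with filtered colimits of spaces, $H_r(\tilde X_a)=\varinjlim_{b\to -\infty}H_r(\tilde X_{[b,a]})$, and since filtered colimits are exact, $\mathbb K^{\tilde f}_a(r)$ is the direct limit of the finite-slab kernels $\ker(H_r(\tilde X_{[b,a]})\to H_r(\tilde X))$. By Proposition~\ref{P36}(2), this map is conjugate via the isomorphisms $\tilde\omega'_{r,[b,a]}$ and $\tilde\omega_r$ to the map $\kappa[\tilde S'_{r,[b,a]}(f)]\to\kappa[\tilde S_r(f)]$ which sends a basis bar code $I$ to itself when $I\in\tilde S_r(f)=\tilde{\mathcal B}^c_r(f)\sqcup\tilde{\mathcal B}^o_{r-1}(f)\sqcup\tilde{\mathcal J}_r(f)$ and to zero otherwise. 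Hence the finite-slab kernel is spanned by those degree-$r$ bar codes $I\in\tilde{\mathcal B}'_{r,[b,a]}(f)$ that are not closed, i.e., closed-open, open-closed, or open bar codes whose intersection with $[b,a]$ is closed and nonempty.

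The direct limit is extracted by a case analysis. A closed-open $[c,d)$ has closed intersection with $[b,a]$ precisely when $c\leq a<d$ (and $b\leq d$, automatic once $b$ is small), so the condition stabilises to $I\ni a$ and the transition maps act as the identity on such bar codes. An open-closed $(c,d]$ has closed intersection only while $b>c$; as soon as $b\leq c$ the intersection becomes $(c,\min(d,a)]$, which is not closed, so the bar code leaves $\tilde S'_{r,[b,a]}(f)$ and the transition map sends it to zero. Open bar codes $(c,d)$ behave analogously. Consequently only the closed-open bar codes $[c,d)\ni a$ survive in the colimit, giving item~(1). Item~(2) follows from the symmetric analysis of $\tilde X^a=\bigcup_{b>a}\tilde X_{[a,b]}$: as $b\to \infty$ the closed-intersection condition with $[a,b]$ persists only for open-closed bar codes $(c,d]$ with $c<a\leq d$, i.e., $I\ni a$.

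For items~(3) and~(4), the restricted map $\,'i_{a,b}(r)$ is induced on kernels by the inclusion $\tilde X_{[b_1,a]}\subset\tilde X_{[b_1,b]}$ in the limit $b_1\to -\infty$. By the bar-code naturality in Proposition~\ref{P36}, a closed-open $[c,d)\in\mathbb K^{\tilde f}_a(r)$, so $c\leq a<d$, maps to itself in $\mathbb K^{\tilde f}_b(r)$ exactly when $[c,d)\ni b$, i.e., when $b<d$, and to zero when $b\geq d$. Reading off kernel and cokernel on bases yields item~(3); the symmetric computation with $\tilde X^b\subset\tilde X^a$ and open-closed bar codes yields item~(4). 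The one point requiring real care is the combinatorial case analysis in the direct-limit step, and in particular verifying that open-closed and open degree-$r$ bar codes are genuinely killed by the transition maps so that they do not persist in $\mathbb K^{\tilde f}_a(r)$; this sublevel/superlevel asymmetry between closed-open and open-closed bar codes is exactly what underpins the Poincar\'e duality of Theorem~\ref{T5}.
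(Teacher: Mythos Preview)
Your argument is correct and follows the paper's approach: the paper simply states that Proposition~\ref{P36} permits the description of these kernels and cokernels and gives no further detail, so your case analysis is exactly the work the paper leaves implicit. One remark: Proposition~\ref{P36}(2) is stated for intervals $[a',b']$ allowing $a'=-\infty$ or $b'=\infty$, so the isomorphism $\tilde\omega'_{r,(-\infty,a]}\colon\kappa[\tilde S'_{r,(-\infty,a]}(f)]\to H_r(\tilde X_a)$ is available directly and your filtered-colimit passage over finite slabs is not strictly needed---you can read off item~(1) straight from the set $\tilde{\mathcal B}'_{r,(-\infty,a]}(f)$, where the only non-closed bar codes with closed nonempty intersection with $(-\infty,a]$ are the closed-open ones containing $a$.
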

Note that $\mathbb K_a(r)$ and $\mathbb K^a(r)$ are finite dimensional vector spaces.
\vskip .1in

%
In view of the  tameness of $f$ and  of (\ref {EEE62}), (\ref{EQQQQ1}) and (\ref {EQQQQ2}) one concludes that for $a<b$ and $\epsilon$ small enough

\begin{equation}\label {EE63}
\begin{aligned}
\delta_k^{m,\tilde f}(a,b)=&
\dim\ker\bigl( i_{a+\epsilon,b+\epsilon}(k)\bigr)-\dim\ker\bigl(i_{a-\epsilon,b+\epsilon}(k)\bigr)\\
&\quad -\dim\ker\bigl(i_{a+\epsilon,b-\epsilon}(k)\bigr) +\dim\ker\bigl(i_{a-\epsilon,b-\epsilon}(k)\bigr)\\
=&\dim\ker\bigl(\tilde\iota_{a+\epsilon,b+\epsilon}(k)\bigr)-\dim\ker\bigl(\tilde\iota_{a-\epsilon,b+\epsilon}(k)\bigr)\\
&\quad -\dim\ker\bigl(\tilde\iota_{a+\epsilon,b-\epsilon}(k)\bigr) +\dim\ker\bigl(\tilde\iota_{a-\epsilon,b-\epsilon}(k)\bigr)\\
\end{aligned}
\end{equation}
and for $a>b$ and $\epsilon $ small enough one has 
\begin{equation}\label{EE64}
\begin{aligned}
\delta_k^{m,\tilde f}(a,b)=&
\dim\ker\bigl(i^{a-\epsilon,b-\epsilon}(r)\bigr)-\dim\ker\bigl(i^{a+\epsilon,b-\epsilon}(r)\bigr)\\
&\quad -\dim\ker\bigl(i^{a-\epsilon,b+\epsilon}(r)\bigr) +\dim\ker\bigl(i^{a+\epsilon,b+\epsilon}(r)\bigr)\\
=&\dim\ker\bigl(\tilde\iota^{a-\epsilon,b-\epsilon}(r)\bigr)-\dim\ker\bigl(\tilde\iota^{a+\epsilon,b-\epsilon}(r)\bigr)\\
&\quad-\dim\ker\bigl(\tilde\iota^{a-\epsilon,b+\epsilon}(r)\bigr) +\dim\ker\bigl(\tilde\iota^{a+\epsilon,b+\epsilon}(r)\bigr).
\end{aligned}
\end{equation}

Next observe that the long exact sequence for the pair $(\tilde X,\tilde X_\alpha),$  $\alpha\in \mathbb R$
\begin{multline*}
\cdots\to 
H_{n-r}(\tilde X)\xrightarrow{j^\alpha(n-r)} 
H_{n-r}(\tilde X,\tilde X^\alpha)\xrightarrow{\partial^\alpha(n-r)}
\\\to
H_{n-r-1}(\tilde X^\alpha)\xrightarrow{i^\alpha(n-r-1)}
H_{n-1-r}(\tilde X)\to\cdots
\end{multline*}
provides the   isomorphism
\begin{equation}
\hat{\partial}^\alpha(n-r)\colon\coker\bigl(j^\alpha(n-r)\bigr)\to\ker\bigl(i^\alpha (n-r-1)\bigr)=\mathbb K^\alpha(n-r-1)
\end{equation}
which, being ``natural'' w.r.\ to the inclusion of pairs $(\tilde X, \tilde X^\beta)\subseteq (\tilde X, \tilde X^\alpha)$ for $\alpha\leq \beta$, 
makes the diagram below commutative.
\begin{equation}\label{E60}
\vcenter{
\xymatrix{
\coker\bigl(j^\beta(n-r)\bigr)\ar[d]\ar[rr]^{\hat{\partial}^\beta(n-r)} && \mathbb K^\beta(n-r-1)\ar[d]^{\tilde\iota^{\beta,\alpha}(n-r-1)} \\
\coker\bigl(j^\alpha(n-r)\bigr)\ar[rr]^{\hat{\partial}^\alpha(n-r)}       && \mathbb K^\alpha(n-r-1)
}}
\end{equation}

{\bf Proof of Theorem \ref {T6}}

Suppose now that $X=M$ is a closed $\kappa$-orientable $n-$dimensional manifold and $\alpha$ is a regular value of $\tilde f$.
Poincar\'e duality for the manifold $\tilde M$ and for the pairs $(\tilde M,\tilde M_\alpha)$ and $(\tilde M,\tilde M^\alpha)$ provides the commutative diagram 
\begin{equation}\label{E61b}
\vcenter{
\xymatrix{
\mathbb K_\alpha(r)\ar@{=}[d]\ar[r]&H_r(\tilde M_\alpha)\ar[d]\ar[r]^{i_\alpha(r)} &H_r(\tilde M)\ar[d]\\
\mathbb K^{\BM}_\alpha(r)\ar[d]^{{\rm PD_1}}\ar[r]&H^{\BM}_r(\tilde M_\alpha)\ar[d]^{{\rm PD_2}}\ar[r]^{i^{\BM}_\alpha(r)} &H^{\BM}_r(\tilde M)\ar[d]^{{\rm PD_3}}\\
(\coker  j^\alpha(n-r))^\ast \ar[r]&(H_{n-r}(\tilde M, \tilde M^\alpha))^\ast \ar[r]&(H_{n-r}(\tilde M))^\ast
}}
\end{equation}
with the bottom vertical arrows $PD_1, PD_2, PD_3$ 
isomorphisms. This is because $PD_2$ and $PD_3$  which appear in \eqref{PD1} are isomorphisms as indicated in Section \ref{SS5}. 
The diagram is natural w.r.\ to the inclusion of pairs $(X, X_\alpha)\subseteq (X, X_\beta)$, provided $\alpha$ and $\beta$ are regular values.
It leads to the following commutative diagram whose vertical arrows are all isomorphisms:
\begin{equation} \label{E62}
\vcenter{
\xymatrix{
\mathbb K_\alpha(r)           \ar[d]\ar[rr]^{\tilde\iota_{a,b}(r)}          && \mathbb K_\beta(r)          \ar[d] \\
(\coker j^\alpha(n-r))^\ast     \ar[rr]                              && (\coker j^b(n-r))^\ast      \\
\mathbb K^\alpha(n-r-1)^\ast  \ar[u]_{\hat{\partial}^a(n-r)^\ast}
\ar[rr]^{\tilde\iota^{\beta,\alpha}(n-r-1)^\ast} && \mathbb K^\beta(n-r-1)^\ast \ar[u]_{\hat{\partial}^\beta(n-r)^\ast}
}}
\end{equation}

To finalize the proof of Theorem~\ref{T6}, recall that the tameness of $f$ implies the tameness of $\tilde f$ and for $a$, $b$ critical values and $\epsilon<\epsilon(f)$,
the numbers $a\pm \epsilon, b\pm \epsilon$ are regular values, therefore by \eqref{E62} one has 
\begin{equation}\label {E65}
\tilde\iota_{a\pm\epsilon, b\pm\epsilon'}(r)=\bigl(\tilde\iota^{b\pm\epsilon, a\pm\epsilon'}(n-1-r)\bigr)^\ast.
\end{equation}

The equations \eqref{EE63}, \eqref{EE64}, and \eqref{E65} imply $\delta_r^{m,\tilde f}(a,b)=\delta_{n-r-1}^{m,\tilde f}(b,a)$ and then  
\newline $C^m_r(f)(\langle a,b\rangle)= \delta_r^{m,f}(\langle a,b\rangle)= \delta_{n-r-1}^{m,f}(\langle b, a\rangle)= C^m_{n-r-1}(f)(\langle b, a\rangle).$ equality which establishes  Theorem~\ref{T6}. 

\section{Linear relations and monodromy. Proof of Theorem~\ref{T4}}\label{SS2}

We begin this section with a discussion of linear relations.
To every linear relation $R\colon V\leadsto V$ we associate a linear relation $R_\reg\colon V_\reg\leadsto V_\reg$
on a subquotient, $V_\reg$, of $V$. In Proposition~\ref{P:AA} we show that $R_\reg$ is a linear  isomorphism.
If $V$ is a finite dimensional vector space, then, according to the Krull--Remak--Schmidt theorem, $R$
can be decomposed as a direct sum of indecomposable linear relations, $R\cong R_1\oplus\cdots\oplus R_N$,
where the factors $R_i\colon V_i\leadsto V_i$ are unique up to permutation and isomorphism. 
In this case, $R_\reg$ is isomorphic to the direct sum of factors 
which are indecomposable linear isomorphisms see Proposition~\ref{P:C}.
For linear relations on complex vector spaces $R_\reg$ can easily be derived using the detailed structure theorem in \cite{SdSW05}. 
Here we will  be concerned with 
vector spaces over arbitrary fields.
Most of this material can be developed for linear relations on modules over commutative rings, and this is the setting for the basic definitions, although in this paper we are interested only in the case of vector spaces. 

In the second part of this section, we consider the level $X_\theta=f^{-1}(\theta)$ associated with a continuous map $f\colon X\to S^1$ and a  value $\theta\in S^1$ s.t. $f^{-1}(\theta)$ is an ANR.
Using the corresponding infinite cyclic covering $\tilde X\to X$ one obtains a linear relation $R_\theta$ on $H_*(X_\theta)$, see Section~\ref{S1} or \eqref{E:RSigma} below.
We will show that $(R_\theta)_\reg$ is conjugate to the isomorphism induced by the fundamental deck transformation on
$$
\ker\Bigl(H_*(\tilde X)\to H_*^{\Nov,-}(\tilde X)\oplus H_*^{\Nov,+}(\tilde X)\Bigr),
$$
see Theorem~\ref{T:monreg}. 
Here $H_*^{\Nov,\pm}(\tilde X)$ denote Novikov type homology groups explained below.
The second part of Theorem~\ref{T:monreg} implies  that this is isomorphic to $\ker (H_r(\tilde X)\to H_r^N(X, \xi_f))$ considered in Section \ref{S4}.

This result holds true without compactness assumptions on $X$ (and with arbitrary coefficients not necessary in a field).
It implies that $R_\reg$ is a homotopy invariant of $f$.

At the end of Section~\ref{S8} we will give a proof of Theorem~\ref{T4}.

\subsection{Linear relations}\label{SS:linrel}

Suppose $V$ and $W$ are two modules over a fixed commutative ring.
Recall that a linear relation from $V$ to $W$ can be considered as a submodule $R\subseteq V\times W$. 
Notationally, we indicate this situation by $R\colon V\leadsto W$. For $v\in V$ and $w\in W$ we write $vRw$ iff $v$ is in 
relation with $w$, i.e.\ $(v,w)\in R$. Every module homomorphism $V\to W$ can be regarded as a linear relation $V\leadsto W$ 
in a natural way. If $U$ is another module, and $S\colon W\leadsto U$ is a linear relation, then
the composition $SR\colon V\leadsto U$ is the linear relation defined by $v(SR)u$ iff there exists $w\in W$ 
such that $vRw$ and $wSu$. Clearly, this is an associative composition generalizing the ordinary composition of module
homomorphisms. For the identical relations we have $R\Id_V=R$ and $\Id_WR=R$. Modules over a fixed commutative ring and linear relations 
thus constitute a category. If $R\colon V\leadsto W$ is a linear relation we define a linear relation $R^\dag\colon W\leadsto V$ 
by $wR^\dag v$ iff $vRw$. Clearly, $R^{\dag\dag}=R$ and $(SR)^\dag=R^\dag S^\dag$.

A linear relation $R\colon V\leadsto W$ gives rise to the following submodules:
\begin{align*}
\dom(R)&:=\{v\in V\mid\exists w\in W:vRw\}
\\
\img(R)&:=\{w\in W\mid\exists v\in V:vRw\}
\\
\ker(R)&:=\{v\in V\mid vR0\}
\\
\mul(R)&:=\{w\in W\mid 0Rw\}
\end{align*}
Clearly, $\ker(R)\subseteq\dom(R)\subseteq V$, and $W\supseteq\img(R)\supseteq\mul(R)$.
Note that $R$ is a homomorphism (map) iff $\dom(R)=V$ and $\mul(R)=0$. One readily verifies:

\begin{lemma}\label{L:1}
For a linear relation $R\colon V\leadsto W$ the following are equivalent:
\begin{enumerate}[(a)]
\item
$R$ is an isomorphism in the category of modules and linear relations.
\item
$\dom(R)=V$, $\img(R)=W$, $\ker(R)=0$, and $\mul(R)=0$.
\item
$R$ is an isomorphism of modules.
\end{enumerate}
In this case $R^{-1}=R^\dag$.
\end{lemma}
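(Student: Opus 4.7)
The plan is to establish the equivalence via the cycle (c)$\Rightarrow$(a)$\Rightarrow$(b)$\Rightarrow$(c), and separately to verify $R^{-1}=R^\dag$. Two of these implications are essentially formal; the substantive content sits in (a)$\Rightarrow$(b).

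For (c)$\Rightarrow$(a), I would simply note that composition of module maps, viewed as linear relations, agrees with ordinary composition, so if $R$ is a module isomorphism with set-theoretic inverse $R^{-1}$, then $R^{-1}R=\id_V$ and $RR^{-1}=\id_W$ continue to hold in the relation category. For (b)$\Rightarrow$(c), the conditions $\dom(R)=V$ and $\mul(R)=0$ together guarantee that $R$ is the graph of a single-valued map from $V$ to $W$: existence of a relation partner comes from $\dom(R)=V$, and uniqueness from the observation that $vRw_1$ and $vRw_2$ force $(0,w_1-w_2)\in R$ by the submodule property, so $w_1-w_2\in\mul(R)=0$. Linearity then follows from $R$ being a submodule of $V\times W$, and $\ker(R)=0$ together with $\img(R)=W$ give injectivity and surjectivity.

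The main step is (a)$\Rightarrow$(b). Given a categorical inverse $S\colon W\leadsto V$ with $SR=\id_V$ and $RS=\id_W$, one reads off directly from $SR=\id_V$ that $\dom(R)=V$ (for each $v$, the statement $v(SR)v$ produces some $w$ with $vRw$) and that $\ker(R)=0$ (if $vR0$, then combining with $0S0$ yields $v(SR)0$, hence $v=0$). The remaining two conditions $\img(R)=W$ and $\mul(R)=0$ I would obtain by applying the very same argument to $R^\dag$: taking $\dag$ of both given identities and using $(SR)^\dag=R^\dag S^\dag$ together with self-duality of $\id_V,\id_W$ shows that $S^\dag$ is a categorical inverse of $R^\dag$, and then $\dom(R^\dag)=\img(R)$ and $\ker(R^\dag)=\mul(R)$ translate the conclusion back. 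The main potential obstacle is nothing more than bookkeeping — keeping straight which side of which identity is being used — and the involution $\dag$ handles it cleanly in one stroke.

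For the final identification $R^{-1}=R^\dag$, once we know that $R$ is a module isomorphism the check is immediate: the defining equivalence $wR^\dag v\Leftrightarrow vRw$ coincides with $R^{-1}(w)=v\Leftrightarrow R(v)=w$, so $R^\dag$ and $R^{-1}$ have identical graphs as subsets of $W\times V$.
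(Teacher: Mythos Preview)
Your proof is correct. The paper does not actually give a proof of this lemma---it is introduced with the phrase ``One readily verifies''---so your argument supplies precisely the routine verification the authors omit, and each step (including the use of $\dag$ to deduce $\img(R)=W$ and $\mul(R)=0$ from the already-established half) is sound.
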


For a linear relation $R\colon V\leadsto V$, we introduce the following submodules:
\begin{align*}
K_+&:=\{v\in V\mid\exists k\,\exists v_i\in V:vRv_1Rv_2R\cdots Rv_kR0\}
\\
K_-&:=\{v\in V\mid\exists k\,\exists v_i\in V:0Rv_{-k}R\cdots Rv_{-2}Rv_{-1}Rv\}
\\
D_+&:=\{v\in V\mid\exists v_i\in V:vRv_1Rv_2Rv_3R\cdots\}
\\
D_-&:=\{v\in V\mid\exists v_i\in V:\cdots Rv_{-3}Rv_{-2}Rv_{-1}Rv\}
\\
D:=D_-\cap D_+&=\{v\in V\mid\exists v_i\in V:\cdots Rv_{-2}Rv_{-1}RvRv_1Rv_2R\cdots\},
\end{align*}
Clearly, $K_-\subseteq D_-\subseteq V\supseteq D_+\supseteq K_+$.
Also note that passing from $R$ to $R^\dag$, the roles of $+$ and $-$ get interchanged.
Moreover, we introduce a linear relation on the quotient module
\begin{equation}\label{E:Vreg} 
V_\reg:=\frac{D}{(K_-+K_+)\cap D}
\end{equation}
defined as the composition 
$$
V_\reg=\frac{D}{(K_-+K_+)\cap D}\overset{\pi^\dag}\leadsto D\overset\iota\leadsto V\overset R\leadsto V\overset{\iota^\dag}\leadsto D\overset\pi\leadsto\frac{D}{(K_-+K_+)\cap D}=V_\reg,
$$
where $\iota$ and $\pi$ denote the canonical inclusion and projection, respectively.
In other words, two elements in $V_\reg$ are related by $R_\reg$ iff they admit representatives in $D$
which are related by $R$. We refer to $R_\reg$ as the \emph{regular part} of $R$.

\begin{proposition}\label{P:AA}
The relation $R_\reg\colon V_\reg\leadsto V_\reg$ is an isomorphism of modules. Moreover,
the natural inclusion induces a canonical isomorphism
\begin{equation}\label{E:100}
V_\reg=\frac{D}{(K_-+K_+)\cap D}\xrightarrow\cong\frac{(K_-+D_+)\cap(D_-+K_+)}{K_-+K_+}
\end{equation}
which intertwines $R_\reg$ with the relation induced on the right hand side quotient.
\end{proposition}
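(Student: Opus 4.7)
The plan is to verify the four conditions of Lemma~\ref{L:1} for $R_\reg$ to conclude it is an isomorphism, and then to establish the canonical isomorphism \eqref{E:100} by analyzing the inclusion of $D$ into $(K_-+D_+)\cap(D_-+K_+)$ modulo $K_-+K_+$. Along the way I will repeatedly use that $K_+\subseteq D_+$ and $K_-\subseteq D_-$, which holds because any finite chain terminating at $0$ extends to an infinite chain via $0R0R\cdots$ (available since $R$ is a submodule and hence contains $(0,0)$).

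For $\dom(R_\reg)=V_\reg$, given $[v]\in V_\reg$ with $v\in D$, the inclusion $v\in D_+$ provides some $v_1$ with $vRv_1$ that extends to an infinite forward chain, while a backward chain $\cdots Rv_{-1}Rv$ witnessing $v\in D_-$ prepends to give an infinite backward chain for $v_1$, so $v_1\in D$ and $[v]R_\reg[v_1]$. The image condition follows symmetrically via $R^\dag$. The crucial step will be $\ker(R_\reg)=0$: if $[v]R_\reg[0]$ with $v\in D$, then there exist $v',w\in D$ with $v'\equiv v\pmod{(K_-+K_+)\cap D}$, $w\in K_-+K_+$, and $v'Rw$. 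Writing $w=a+b$ with $a\in K_-$ and $b\in K_+$, I pick witnessing chains $0Ra_{-s}R\cdots Ra_{-1}Ra$ and $bRb_1R\cdots Rb_r=0$. Since $R\subseteq V\times V$ is a submodule, the pairs $(v',a+b)\in R$ and $(a_{-1},a)\in R$ combine by subtraction to give $(v'-a_{-1},b)\in R$; concatenating with the witnessing chain for $b$ yields $v'-a_{-1}\in K_+$, while $a_{-1}\in K_-$ by construction. Hence $v'\in K_-+K_+$, so $[v]=[v']=0$ in $V_\reg$. The symmetric argument applied to $R^\dag$ gives $\mul(R_\reg)=0$, completing the verification via Lemma~\ref{L:1}. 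The main obstacle is precisely this decomposition step, which exploits the submodule structure of $R$ in an essential way, since one cannot in general split an element related by $R$ into summands related to prescribed components.

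For the canonical isomorphism \eqref{E:100}, the inclusion-induced map is well-defined and injective because $(K_-+K_+)\cap D$ is exactly the preimage of $K_-+K_+$ in $D$. For surjectivity, given $x\in(K_-+D_+)\cap(D_-+K_+)$, I write simultaneously $x=k_-+d_+=d_-+k_+$ with $k_\pm\in K_\pm$ and $d_\pm\in D_\pm$, and set $z:=x-k_--k_+$; then $z=d_+-k_+\in D_+$ and, by the parallel calculation, $z=d_--k_-\in D_-$, so $z\in D$, while $x-z=k_-+k_+\in K_-+K_+$, yielding $[x]=[z]$ in the target quotient. The intertwining assertion then follows tautologically: both induced relations are defined by the same composition of canonical inclusions, $R$, and canonical projections, and the inclusion $D\hookrightarrow(K_-+D_+)\cap(D_-+K_+)$ arises from the identity on $V$, so it commutes with the restriction of $R$.
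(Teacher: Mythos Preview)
Your proof is essentially correct and follows the same strategy as the paper (which merely reverses the order: it establishes the bijection \eqref{E:100} and the intertwining first, then proves $R_\reg$ is an isomorphism). Your arguments for $\dom(R_\reg)=\img(R_\reg)=V_\reg$, for $\ker(R_\reg)=\mul(R_\reg)=0$, and for the bijectivity of \eqref{E:100} match the paper's closely.

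There is, however, one genuine gap: your intertwining argument is too quick. You claim it ``follows tautologically'' because both relations arise from $R$ via canonical inclusions and projections, but this only yields one direction. If $[x]R_\reg[y]$ is witnessed by $x',y'\in D$ with $x'Ry'$, then certainly $x',y'$ also lie in the larger set $(K_-+D_+)\cap(D_-+K_+)$ and witness the relation on the right-hand quotient. The converse, however, is not tautological: given $u',v'\in(K_-+D_+)\cap(D_-+K_+)$ with $u'Rv'$ representing two classes on the right, you must produce representatives in $D$ that remain $R$-related. The paper does this explicitly: writing $u'=d_-+k_+$ and $v'=\tilde k_-+\tilde d_+$, it picks $k_+'\in K_+$ and $\tilde k_-'\in K_-$ with $k_+Rk_+'$ and $\tilde k_-'R\tilde k_-$ (available since $K_\pm$ are defined by finite chains), sets $d:=u'-k_+-\tilde k_-'$ and $\tilde d:=v'-k_+'-\tilde k_-$, obtains $dR\tilde d$ by linearity, and then checks $d,\tilde d\in D$ (e.g.\ $d=d_--\tilde k_-'\in D_-$ directly, while $dR\tilde d$ with $\tilde d=\tilde d_+-k_+'\in D_+$ gives $d\in D_+$). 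Without this step the intertwining claim is not established.
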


\begin{proof}
Clearly, \eqref{E:100} is well defined and injective. To see that it is onto let
$$
x=k_-+d_+=d_-+k_+\in(K_-+D_+)\cap(D_-+K_+),
$$
where $k_\pm\in K_\pm$ and $d_\pm\in D_\pm$. Thus
$$
x-k_--k_+=d_+-k_+=d_--k_-\in D_+ \cap D_-=D.
$$
We conclude $x\in D+K_-+K_+$, whence \eqref{E:100} is onto. We will next show that this isomorphism intertwines
$R_\reg$ with the relation induced on the right hand side. To do so, suppose $xR\tilde x$ where
\begin{align*}
x&=k_-+d_+=d_-+k_+\in(K_-+D_+)\cap(D_-+K_+),
\\
\tilde x&=\tilde k_-+\tilde d_+=\tilde d_-+\tilde k_+\in(K_-+D_+)\cap(D_-+K_+),
\end{align*}
and $k_\pm,\tilde k_\pm\in K_\pm$ and $d_\pm,\tilde d_\pm\in D_\pm$. Note that there exist $k_+'\in K_+$ and
$\tilde k_-'\in K_-$ such that $k_+Rk_+'$ and $\tilde k_-'R\tilde k_-$. By linearity of $R$ we obtain
$$
\underbrace{(x-k_+-\tilde k_-')}_{\in D_-}R\underbrace{(\tilde x-k_+'-\tilde k_-)}_{\in D_+}.
$$
We conclude $d:=x-k_+-\tilde k_-'\in D$, $\tilde d:=\tilde x-k_+'-\tilde k_-\in D$, and $dR\tilde d$.
This shows that the relations induced on the two quotients in \eqref{E:100} coincide.
We complete the proof by showing that $R_\reg$ is an isomorphism.
Clearly, $\dom(R_\reg)=V_\reg=\img(R_\reg)$. We will next show $\ker(R_\reg)=0$. To this end suppose
$dR\tilde d$, where
$$
d\in D\quad\text{and}\quad\tilde d=\tilde k_-+\tilde k_+\in(K_-+K_+)\cap D
$$
with $\tilde k_\pm\in K_\pm$. Note that $\tilde k_-=\tilde d-\tilde k_+\in K_-\cap D_+$. Thus there exists
$k_-\in K_-\cap D_+$ such that $k_-R\tilde k_-$. By linearity of $R$, we get $(d-k_-)R\tilde k_+$, whence 
$d-k_-\in K_+$ and thus $d\in K_-+K_+$.
This shows $\ker(R_\reg)=0$. Analogously, we have $\mul(R_\reg)=0$. In view of Lemma~\ref{L:1} we conclude that
$R_\reg$ is an isomorphism of modules. 
\qed
\end{proof}

We will now specialize to linear relations on finite dimensional vector spaces and provide
another description of $V_\reg$ in this case. Consider the category whose objects are finite 
dimensional vector spaces $V$ equipped with a linear relation $R\colon V\leadsto V$ and whose 
morphisms are linear maps $\psi\colon V\to W$ such that for all $x,y\in V$ with $xRy$ we also have 
$\psi(x)Q\psi(y)$, where $W$ is another finite dimensional vector space with linear relation $Q\colon W\leadsto W$.
It is readily checked that this is an abelian category. By the Krull--Remak--Schmidt theorem, every linear
relation on a finite dimensional vector space can therefore be decomposed into a direct sum of
indecomposable ones, $R\cong R_1\oplus\cdots\oplus R_N$, where the factors are unique up to
permutation and isomorphism. The decomposition itself, however, is not canonical.

\begin{proposition}\label{P:C}
Let $R\colon V\leadsto V$ be a linear relation on a finite dimensional vector space over a field , and let
$R\cong R_1\oplus\cdots\oplus R_N$ denote a decomposition into indecomposable linear relations.
Then $R_\reg$ is isomorphic to the direct sum of factors $R_i$ whose relations are linear isomorphisms.
\end{proposition}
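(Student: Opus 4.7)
The plan is to reduce the statement to indecomposable summands via functoriality of the construction $R\mapsto R_\reg$, and then analyze each indecomposable separately.

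First I would check that $R\mapsto R_\reg$ is an additive functor on the category of linear relations. A morphism $\phi\colon(V,R)\to(W,Q)$ --- meaning a linear map with $vRv'\Rightarrow\phi(v)Q\phi(v')$ --- visibly sends each of $K_\pm^R$ and $D_\pm^R$ into the corresponding subspaces for $Q$, and hence descends to a linear map $\phi_\reg\colon V_\reg\to W_\reg$ intertwining $R_\reg$ and $Q_\reg$. For a direct sum $R=R_1\oplus\cdots\oplus R_N$ on $V=V_1\oplus\cdots\oplus V_N$, each of $K_\pm^R$, $D_\pm^R$ and $D^R$ itself splits as a direct sum of the corresponding subspaces of the $V_i$, yielding a canonical isomorphism $R_\reg\cong(R_1)_\reg\oplus\cdots\oplus(R_N)_\reg$. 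It therefore suffices to prove the proposition one indecomposable summand at a time.

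One direction is immediate. If $R_i$ is itself a linear isomorphism, then Lemma~\ref{L:1} gives $\ker R_i=\mul R_i=0$ and $\dom R_i=\img R_i=V_i$, which forces $K_\pm^{R_i}=0$ and $D_\pm^{R_i}=V_i$; hence $D=V_i$, $(V_i)_\reg=V_i$, and $(R_i)_\reg=R_i$. The substantive step, and the main obstacle, is the converse: if $R_i$ is indecomposable and \emph{not} a linear isomorphism, then $(V_i)_\reg=0$. For this I would invoke the classification of indecomposable linear relations on a finite-dimensional vector space over an algebraically closed field --- an analog of Kronecker's theorem for pencils of matrices, closely parallel to the classification of indecomposable $G_{2m}$-representations from Section~\ref{S2}. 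Apart from the graphs of invertible Jordan cells $T(\lambda,k)$ with $\lambda\in\kappa\setminus 0$ (handled by the previous direction), the indecomposables are: graphs of nilpotent Jordan blocks $T(0,k)$, for which $K_+^{R_i}=V_i$ by nilpotence; their reverses $R_i^\dag$, for which symmetrically $K_-^{R_i}=V_i$; and finitely many ``string'' relations analogous to the four types of bar codes in Section~\ref{S2}. For each such non-isomorphism indecomposable one reads off directly from the description of the chains $\cdots Rv_{-1}RvRv_1R\cdots$ that $D\subseteq K_+^{R_i}+K_-^{R_i}$, and therefore $(V_i)_\reg=0$.

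A classification-free alternative would argue by contradiction: if $(V_i)_\reg\neq 0$, then Proposition~\ref{P:AA} makes $(R_i)_\reg$ a non-zero linear isomorphism, which over an algebraically closed field has an eigenvector with non-zero eigenvalue $\lambda$. One would then try to lift the corresponding cyclic $R_\reg$-invariant subspace of $V_\reg$ to an $R_i$-invariant direct summand of $V_i$ on which $R_i$ acts as $T(\lambda,k)$, together with an $R_i$-invariant complement, contradicting indecomposability. Producing this splitting is the real technical heart of either route, and the classification approach is what handles it by explicit inspection.
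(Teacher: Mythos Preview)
Your proposal is correct and follows essentially the same route as the paper: reduce to indecomposables via naturality/additivity of $R\mapsto R_\reg$, observe that $(R_i)_\reg=R_i$ when $R_i$ is an isomorphism, and then dispose of the remaining indecomposables by invoking the classification and inspecting each explicitly. The only difference is packaging: the paper regards $R\subseteq V\times V$ as a representation $R\rightrightarrows V$ of the quiver $G_2$ via the two projections and calls every non-isomorphism indecomposable a ``bar code type,'' whereas you separate out the nilpotent Jordan graphs, their reverses, and the string types---but these are the same objects. Your sketched classification-free alternative (lifting an $R_\reg$-eigenvector to split off a summand) is not in the paper and would need more work to make rigorous, but it is not needed for the argument.
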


\begin{proof}
Since the definition of $R_\reg$ is a natural one, we clearly have
$$
R_\reg\cong(R_1)_\reg\oplus\cdots\oplus(R_N)_\reg.
$$
Consequently, it suffices to show the following two assertions:
\begin{enumerate}[(a)]
\item
If $R\colon V\leadsto V$ is an isomorphism of vector spaces, then $V_\reg=V$ and $R_\reg=R$.
\item
If $R\colon V\leadsto V$ is an indecomposable linear relation on a finite dimensional vector space which is not a linear isomorphism, then $V_\reg=0$.
\end{enumerate}
The first statement is obvious, in this case we have $K_-=K_+=0$ and $D=D_-=D_+=V$. 
To see the second assertion, note that an indecomposable linear relation $R\subseteq V\times V$ gives rise to an indecomposable representation $R\genfrac{}{}{0pt}{}{\to}{\to}V$ of the quiver $G_2$.
Since $R$ is not an isomorphism, the quiver representation has to be of the bar code type.
Using the explicit descriptions of the bar code representations, it is straight forward to conclude $V_\reg=0$.
\qed
\end{proof}

In the subsequent discussion we will also make use of the following result:

\begin{proposition}\label{P:X}
Suppose $R\colon V\leadsto V$ is a linear relation on a finite dimensional vector space. Then:
\begin{equation}\label{E:12}
D_+=D+K_+,\quad D_-=K_-+D,\quad\text{and}
\end{equation}
\begin{equation}\label{E:13}
K_-\cap D_+=K_-\cap K_+=D_-\cap K_+.
\end{equation}
\end{proposition}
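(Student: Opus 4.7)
The plan is to exploit finite dimensionality by stabilizing iterated relations and using the fact that each composition $R^n\subseteq V\times V$ is itself a linear subspace. Choose $N$ large enough so that the four monotone sequences $\dom(R^n)$, $\img(R^n)$, $\ker(R^n)$, $\mul(R^n)$ have all stabilized for $n\ge N$, reaching $D_+$, $D_-$, $K_+$, $K_-$ respectively.

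For \eqref{E:12}, the inclusion $\supseteq$ is immediate (any chain to $0$ may be padded by copies of the trivial relation $0R0$). For the reverse in $D_+=D+K_+$, given $v\in D_+=\dom(R^{2N})$, choose $w,u$ with $vR^NwR^Nu$. Then $w\in\dom(R^N)\cap\img(R^N)=D$. A short sublemma shows that every element of $D$ lies on a bi-infinite chain whose members all belong to $D$; applying this to $w$ produces $w'\in D$ with $w'R^Nw$. Since $(v,w),(w',w)\in R^N$ and $R^N$ is a linear subspace of $V\times V$, the difference $(v-w',0)$ lies in $R^N$, so $v-w'\in\ker(R^N)=K_+$ and $v=w'+(v-w')$ is the required decomposition. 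The identity $D_-=K_-+D$ follows by applying the same argument to $R^\dag$.

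For \eqref{E:13}, the $\supseteq$ containments follow from $K_\pm\subseteq D_\pm$. To prove $K_-\cap D_+\subseteq K_+$, let $v\in K_-\cap D_+$. Then $0R^Nv$ (since $v\in\mul(R^N)$) and $vR^Nu$ for some $u$ (since $v\in\dom(R^N)$); concatenating gives $0R^{2N}u$, forcing $u\in\mul(R^{2N})=\mul(R^N)$, i.e.\ $0R^Nu$. Now $(v,u),(0,u)\in R^N$, so $(v,0)=(v,u)-(0,u)\in R^N$, placing $v\in\ker(R^N)=K_+$. The symmetric inclusion $D_-\cap K_+\subseteq K_-$ follows by the same trick: for $v\in D_-\cap K_+$, pick $w$ with $wR^Nv$; then $wR^{2N}0$ forces $wR^N0$, and $(0,v)=(w,v)-(w,0)\in R^N$ yields $v\in\mul(R^N)=K_-$.

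The main obstacle is bookkeeping: one must pick the stabilization index $N$ uniformly for all four monotone chains simultaneously, and one must verify the sublemma asserting that $D$ admits coherent extensions in both directions (needed only for \eqref{E:12}). Once these preliminaries are in hand, the heart of the proof is the elementary observation that two elements of $R^N$ sharing a coordinate have a difference whose other coordinate is zero, detecting membership in $\ker(R^N)$ or $\mul(R^N)$.
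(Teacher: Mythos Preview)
Your proof is correct. Both yours and the paper's rest on the same stabilization facts $\dom(R^N)=D_+$, $\img(R^N)=D_-$, $\ker(R^N)=K_+$, $\mul(R^N)=K_-$ for $N$ large, but the subsequent arguments diverge. The paper passes to the quotient $D_+/K_+$, shows via a dimension count (Lemma~\ref{L:2}) that the relation induced there by $R^k$ is an isomorphism (Lemma~\ref{L:3}), and reads off \eqref{E:12} from surjectivity and \eqref{E:13} from $\mul=0$. You instead stay inside $V\times V$ and exploit linearity of $R^N$ directly: two pairs sharing a coordinate differ by a pair with a zero coordinate, landing in $\ker(R^N)$ or $\mul(R^N)$. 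Your route is more elementary in that it avoids the quotient and the dimension formula; the paper's route packages the mechanism more conceptually and makes the isomorphism on $D_+/K_+$ available for other purposes.

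One remark on your ``bookkeeping'': the identifications $\dom(R^N)=D_+$ and $\img(R^N)=D_-$ are not merely a matter of choosing $N$ uniformly. The inclusion $\dom(R^N)\subseteq D_+$ genuinely requires an argument (from $\dom(R^N)=\dom(R^{N+1})$ one builds the infinite forward chain inductively), and this is in fact the content of the second half of Lemma~\ref{L:3}. Your ``sublemma'' about bi-infinite chains in $D$ is of the same flavor and would be proved by the same device (use $\img(R^N)=\img(R^{2N})$ to push preimages back into $D$). So both preliminaries you flag are really instances of one observation; it would be cleaner to state and prove that observation once up front rather than treat it as separate housekeeping.
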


For the proof we first establish two lemmas.

\begin{lemma}\label{L:2}
Suppose $R\colon V\leadsto W$ is a linear relation between vector spaces such that $\dim V=\dim W<\infty$. 
Then the following are equivalent:
\begin{enumerate}[(a)]
\item
$R$ is an isomorphism.
\item
$\dom(R)=V$ and $\ker(R)=0$.
\item
$\img(R)=W$ and $\mul(R)=0$.
\end{enumerate}
\end{lemma}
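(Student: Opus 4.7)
The plan is to exploit the fact that a linear relation $R\subseteq V\times W$ is itself a vector subspace, and analyze it via the two coordinate projections
$\pi_1\colon R\to V$, $(v,w)\mapsto v$ and $\pi_2\colon R\to W$, $(v,w)\mapsto w$.
The implication (a)$\Rightarrow$(b) and (a)$\Rightarrow$(c) is immediate from the characterization of isomorphisms in Lemma~\ref{L:1}, so the content lies in the two reverse implications.

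For (b)$\Rightarrow$(a), I would observe directly from the definitions that
\begin{equation*}
\img(\pi_1)=\dom(R),\quad \ker(\pi_1)\cong\mul(R),\quad \img(\pi_2)=\img(R),\quad \ker(\pi_2)\cong\ker(R).
\end{equation*}
Applying the rank--nullity theorem to each projection gives
\begin{equation*}
\dim\dom(R)+\dim\mul(R)=\dim R=\dim\img(R)+\dim\ker(R).
\end{equation*}
Under hypothesis (b), namely $\dom(R)=V$ and $\ker(R)=0$, the identity becomes
$\dim V+\dim\mul(R)=\dim\img(R)$. Since $\img(R)\subseteq W$ and $\dim W=\dim V$, this forces $\dim\mul(R)=0$ and $\dim\img(R)=\dim W$, hence $\mul(R)=0$ and $\img(R)=W$. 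By Lemma~\ref{L:1}, $R$ is therefore an isomorphism.

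For (c)$\Rightarrow$(a), I would pass to the adjoint relation $R^\dag\colon W\leadsto V$ provided before Lemma~\ref{L:1}, which satisfies $\dom(R^\dag)=\img(R)$, $\ker(R^\dag)=\mul(R)$, $\img(R^\dag)=\dom(R)$, and $\mul(R^\dag)=\ker(R)$. Hypothesis (c) for $R$ translates into hypothesis (b) for $R^\dag$, and since $\dim W=\dim V<\infty$, the case already proved shows $R^\dag$ is an isomorphism, whence so is $R$ (using $R^{\dag\dag}=R$). There is no serious obstacle: the whole argument is essentially a single rank--nullity count packaged with the $+/-$ symmetry $R\leftrightarrow R^\dag$.
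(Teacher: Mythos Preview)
Your proof is correct and is essentially the paper's argument, just more explicit: the paper simply records the identity $\dim\dom(R)+\dim\mul(R)=\dim R=\dim\img(R)+\dim\ker(R)$ and appeals to Lemma~\ref{L:1}, whereas you derive that identity via rank--nullity on the two projections and handle (c)$\Rightarrow$(a) by passing to $R^\dag$ rather than by reading the same formula symmetrically. These are cosmetic differences; the mathematical content is identical.
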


\begin{proof}
This follows immediately from the dimension formula
$$
\dim\dom(R)+\dim\mul(R)=\dim(R)=\dim\img(R)+\dim\ker(R)
$$
and Lemma~\ref{L:1}.
\qed
\end{proof}

\begin{lemma}\label{L:3}
If $V$ is finite dimensional, then the composition of relations
$$
D_+/K_+\overset{\pi^\dag}\leadsto D_+\overset\iota\leadsto V\overset{R^k}\leadsto V\overset{\iota^\dag}\leadsto D_+\overset\pi\leadsto D_+/K_+,
$$
is a linear isomorphism, for every $k\geq0$, where $\iota$ and $\pi$ denote the canonical inclusion and projection, respectively.
Analogously, the relation induced by $R^k$ on $D_-/K_-$ is an isomorphism, for all $k\geq0$. Moreover, for sufficiently large $k$,
$$
D_-=\img(R^k)\quad\text{and}\quad D_+=\dom(R^k).
$$
\end{lemma}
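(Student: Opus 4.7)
The plan is to verify the hypotheses of Lemma~\ref{L:2} for the induced relation on $D_+/K_+$, namely that its domain is all of $D_+/K_+$ and its kernel is zero; since source and target have the same finite dimension, Lemma~\ref{L:2} will then deliver that this relation is a linear isomorphism. The analogous statement on $D_-/K_-$ will follow by applying the same argument to $R^\dag$, and the ``moreover'' part will rest on the stabilization of the descending chain of subspaces $\img(R^k)$ in the finite-dimensional $V$.

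Unravelling the composition, I denote it $R'_k$; two classes $[v],[w] \in D_+/K_+$ satisfy $[v] R'_k [w]$ precisely when there exist representatives $v,w \in D_+$ with $vR^kw$. To see $\dom(R'_k) = D_+/K_+$, I will take $v \in D_+$ with a witnessing forward chain $vRv_1Rv_2R\cdots$; then $v_k \in D_+$ (the tail of the chain continues from $v_k$) and $vR^kv_k$, giving $[v] R'_k [v_k]$. To see $\ker(R'_k) = 0$, I will suppose $v'R^kw$ with $v' \in D_+$ and $w \in K_+$, then concatenate the length-$k$ chain from $v'$ to $w$ with a chain $wRw_1R\cdots Rw_{k'}R0$ witnessing $w \in K_+$ to obtain $v' R^{k+k'+1} 0$, whence $v' \in K_+$ and $[v'] = 0$. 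Lemma~\ref{L:2} then completes the first half.

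For $D_-/K_-$, I will apply what has just been proved to the relation $R^\dag$: forward chains for $R^\dag$ are backward chains for $R$, so the ``$D_+$'' and ``$K_+$'' associated with $R^\dag$ are exactly $D_-$ and $K_-$ for $R$; moreover $(R^\dag)^k = (R^k)^\dag$ and the two compositions in question differ only by taking $\dag$ of the whole thing (since $\iota^{\dag\dag}=\iota$, $\pi^{\dag\dag}=\pi$), so one is an isomorphism iff the other is.

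For the ``moreover'' part, $\img(R^{k+1}) \subseteq \img(R^k)$ (truncate a length-$(k+1)$ chain to one of length $k$) is a descending chain that stabilizes in finite dimensions; call the stable value $D^*$ and fix $k_0$ with $\img(R^k) = D^*$ for $k \geq k_0$. The inclusion $D_- \subseteq D^*$ is immediate from the definition of $D_-$. For the reverse, given $v \in D^* = \img(R^{k_0+1})$, the penultimate term of a length-$(k_0+1)$ chain ending at $v$ lies in $\img(R^{k_0}) = D^*$, producing $v_{-1} \in D^*$ with $v_{-1} R v$; iterating this construction inside $D^*$ assembles an infinite backward chain ending at $v$, so $v \in D_-$. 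The equality $\dom(R^k) = D_+$ for large $k$ is then the $R^\dag$-dual. The main obstacle is exactly this backward iteration: it hinges on the closure of $D^*$ under $R$-preimages, a property which follows from the stabilization of the chain of images and would generally fail without finite-dimensionality; the rest of the argument is bookkeeping backed up by the two already-available lemmas on linear relations.
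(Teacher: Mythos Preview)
Your proof is correct and follows essentially the same approach as the paper's: verify $\dom(\pi\iota^\dag R^k\iota\pi^\dag)=D_+/K_+$ and $\ker(\pi\iota^\dag R^k\iota\pi^\dag)=0$, invoke Lemma~\ref{L:2}, and pass to $R^\dag$ for the $D_-/K_-$ statement. The only cosmetic difference is in the ``moreover'' clause: the paper stabilizes the descending chain $\dom(R^k)\supseteq\dom(R^{k+1})$ and builds a forward chain inside the stable value to obtain $D_+=\dom(R^k)$ first, then dualizes for $D_-=\img(R^k)$; you stabilize $\img(R^k)$ and build a backward chain to obtain $D_-=\img(R^k)$ first, then dualize for $D_+=\dom(R^k)$. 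These are the same argument read in opposite directions.
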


\begin{proof}
One readily verifies $\dom(\pi\iota^\dag R^k\iota\pi^\dag)=D_+/K_+$ and $\ker(\pi\iota^\dag R^k\iota\pi^\dag)=0$.
The first assertion thus follows from Lemma~\ref{L:2} above. Considering $R^\dag$ we obtain the second statement.
Clearly, $\dom(R^k)\supseteq\dom(R^{k+1})$, for all $k\geq0$.
Since $V$ is finite dimensional, we must have $\dom(R^k)=\dom(R^{k+1})$, for sufficiently large $k$.
Given $v\in\dom(R^k)$, we thus find $v_1\in\dom(R^k)$ such that $vRv_1$. Proceeding inductively, we construct $v_i\in\img(R^k)$ such that
$vRv_1Rv_2R\cdots$, whence $v\in D_+$. This shows $\dom(R^k)\subseteq D_+$, for sufficiently large $k$. As the converse inclusion is obvious
we get $D_+=\dom(R^k)$. Considering $R^\dag$, we obtain the last statement.
\qed
\end{proof}

\begin{proof}[Proof of Proposition~\ref{P:X}]
From Lemma~\ref{L:3} we get $\img(\pi\iota^\dag R^k)=D_+/K_+$, whence
$D_+\subseteq\img(R^k)+K_+$, for every $k\geq0$, and thus $D_+\subseteq D_-+K_+$. This implies
$D_+=D+K_+$. Considering $R^\dag$ we obtain the other equality in \eqref{E:12}.
From Lemma~\ref{L:3} we also get $\mul(\pi\iota^\dag R^k)=0$, whence
$\mul(R^k)\cap D_+\subseteq K_+$, for every $k\geq0$. This gives 
$K_-\cap D_+=K_-\cap K_+$. Considering $R^\dag$ we get the other equality in \eqref{E:13}.
\qed
\end{proof}

Let us describe the regular part of a linear transformation $\varphi\colon V\to V$ on a finite dimensional vector space $V$ more explicitly.
In this case, we clearly have $K_-=0$, $K_+=\bigcup_n\ker\varphi^n$, $D_+=V$ and $D=D_-=\bigcap_n\img\varphi^n$.
Moreover, $(K_-+K_+)\cap D=0$ according to \eqref{E:13} in Proposition~\ref{P:X}. 
Hence, the regular part of $\varphi$ coincides with the restriction $\varphi\colon\bigcap_n\img\varphi^n\to\bigcap_n\img\varphi^n$, see \eqref{E:Vreg}.
According to Proposition~\ref{P:AA}, the regular part of $\varphi$ can alternatively be described as the induced isomorphism 
$\varphi_\reg\colon V/\bigcup_n\ker\varphi^n\to V/\bigcup_n\ker\varphi^n$, for we have $V=D_-+K_+$ in view of \eqref{E:12} in Proposition~\ref{P:X}.

The following notation and observation will be used in the appendix. 
For two linear maps, $A,B\colon V\to W$, we let $R(A,B)\colon V\leadsto V$ denote the linear relation $R(A,B):= \{(v_1, v_2) \mid A(v_1)= B(v_2)\}$.

\begin{obs}\label{O87}
Suppose $A,B\colon V\to W$ are two linear maps.
\begin{enumerate}[(a)]
\item If $A',B'\colon V\to W'$ denote the composition of $A$ and $B$ with an inclusion of vector spaces, $W\subseteq W'$, then $R(A,B)=R(A',B')$.
\item If $A$ is invertible then $R(A,B)= R({\rm Id}, A^{-1}B)=R(A^{-1}B,{\rm Id})^\dag$.
\item If $A$ is invertible then $R(A,B)_\reg=((A^{-1}B)_\reg)^{-1}.$
\end{enumerate}
\end{obs}

\subsection{Monodromy}\label{S8}

Suppose $f\colon X\to S^1$ is a continuous map and let
$$
\xymatrix{
\tilde X\ar[d]\ar[r]^-{\tilde f}&\R\ar[d]\\X\ar[r]^-f&S^1
}
$$
denote the associated infinite cyclic covering. For $r\in\R$ we put $\tilde X_r=\tilde f^{-1}(r)$ and let $H_*(\tilde X_r)$ denote
its singular homology with coefficients in any fixed field. If $r_1\leq r_2$ we define a linear relation 
$$
B_{r_1}^{r_2}\colon H_*(\tilde X
_{r_1})\leadsto H_*(\tilde X
_{r_2})
$$ 
by declaring $a_1\in H_*(\tilde X_{r_1})$ to be in relation with $a_2\in H_*(\tilde X_{r_2})$ iff
their images in $H_*(\tilde X_{[r_1,r_2]})$ coincide, where $\tilde X_{[r_1,r_2]}=\tilde f^{-1}([r_1,r_2])$.

If $r_1\leq r_2\leq r_3$ we clearly have $B_{r_2}^{r_3}B_{r_1}^{r_2}\subseteq B_{r_1}^{r_3}$. 
To formulate a criterion which guarantees equality of relations, $B_{r_2}^{r_3}B_{r_1}^{r_2}=B_{r_1}^{r_3}$,
we introduce the following notation: A number $r\in\R$ is called \emph{tame value} if, for every $\varepsilon>0$, there exists 
a neighborhood $U$ of $\tilde X_r$ in $\tilde X_{[r-\varepsilon,r+\varepsilon]}$ such that each of the inclusions 
$\tilde X_r\subseteq U$, $\tilde X_{[r-\varepsilon,r]}\cap U\subseteq U$, and $\tilde X_{[r,r+\varepsilon]}\cap U\subseteq U$,
induces isomorphisms in homology. 
The crucial point is that in this case the triad $(\tilde X_{[r-\varepsilon,r+\varepsilon]};\tilde X_{[r,r+\varepsilon]},\tilde X_{[r-\varepsilon,r]})$
gives rise to a long exact Mayer--Vietoris sequence.
Note that for a tame map as considered in Section~\ref{S1}, all values are tame.

\begin{lemma}\label{L:MV}
Suppose $r_1\leq r_2\leq r_3$ and assume $r_2$ is a tame value. 
Then, as linear relations, $B_{r_2}^{r_3}B_{r_1}^{r_2}=B_{r_1}^{r_3}$.
\end{lemma}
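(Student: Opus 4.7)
The inclusion $B_{r_2}^{r_3}B_{r_1}^{r_2}\subseteq B_{r_1}^{r_3}$ has already been noted, so the task is to establish the reverse inclusion. Given $(a_1,a_3)\in B_{r_1}^{r_3}$, I must produce an $a_2\in H_*(\tilde X_{r_2})$ such that $a_1 B_{r_1}^{r_2}a_2$ and $a_2 B_{r_2}^{r_3}a_3$. The plan is to extract such an $a_2$ from the Mayer--Vietoris sequence attached to the decomposition $\tilde X_{[r_1,r_3]}=\tilde X_{[r_1,r_2]}\cup\tilde X_{[r_2,r_3]}$, whose intersection is the fiber $\tilde X_{r_2}$.

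First I would set up the geometric input. Because $f$ is tame and $r_2$ is a tame value, the fiber $\tilde X_{r_2}$ is a neighborhood deformation retract in $\tilde X$; choose open neighborhoods $U_-\supset\tilde X_{[r_1,r_2]}$ and $U_+\supset\tilde X_{[r_2,r_3]}$ inside $\tilde X_{[r_1,r_3]}$ which deformation retract onto $\tilde X_{[r_1,r_2]}$ and $\tilde X_{[r_2,r_3]}$, respectively, and such that $U_-\cap U_+$ deformation retracts onto $\tilde X_{r_2}$. This is precisely the content of condition (1) in the definition of tameness applied at the level $r_2$. With these open sets in place, the Mayer--Vietoris exact sequence of the open cover $\{U_-,U_+\}$ reads, up to the chosen homotopy equivalences,
\begin{equation*}
\cdots\to H_*(\tilde X_{r_2})\xrightarrow{\phi}H_*(\tilde X_{[r_1,r_2]})\oplus H_*(\tilde X_{[r_2,r_3]})\xrightarrow{\psi}H_*(\tilde X_{[r_1,r_3]})\to\cdots,
\end{equation*}
where $\phi(c)=((j_-)_*c,(j_+)_*c)$ and $\psi(b_-,b_+)=(i_-)_*b_- - (i_+)_*b_+$, with $j_\pm$ and $i_\pm$ the evident inclusions.

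Next I would execute the diagram chase. Let $b_-\in H_*(\tilde X_{[r_1,r_2]})$ be the image of $a_1$ and $b_+\in H_*(\tilde X_{[r_2,r_3]})$ the image of $a_3$. The hypothesis $(a_1,a_3)\in B_{r_1}^{r_3}$ means that $(i_-)_*b_-=(i_+)_*b_+$ in $H_*(\tilde X_{[r_1,r_3]})$, i.e.\ $\psi(b_-,b_+)=0$. By exactness of Mayer--Vietoris there exists $a_2\in H_*(\tilde X_{r_2})$ with $\phi(a_2)=(b_-,b_+)$, so $(j_-)_*a_2=b_-$ and $(j_+)_*a_2=b_+$. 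Translating back through the definition of the relations, this says that $a_1$ and $a_2$ have the same image in $H_*(\tilde X_{[r_1,r_2]})$, and $a_2$ and $a_3$ have the same image in $H_*(\tilde X_{[r_2,r_3]})$. Thus $(a_1,a_2)\in B_{r_1}^{r_2}$ and $(a_2,a_3)\in B_{r_2}^{r_3}$, so $(a_1,a_3)\in B_{r_2}^{r_3}B_{r_1}^{r_2}$, completing the proof.

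The only real obstacle is the first step: verifying that Mayer--Vietoris is actually applicable to the closed decomposition $\tilde X_{[r_1,r_3]}=\tilde X_{[r_1,r_2]}\cup\tilde X_{[r_2,r_3]}$. This is where the tameness assumption on $r_2$ is essential; without it the intersection fiber could fail to be a neighborhood deformation retract and the exact sequence would not be available. Once the open thickenings $U_\pm$ are produced from the NDR property of $\tilde X_{r_2}$, the remainder is a one-line application of exactness.
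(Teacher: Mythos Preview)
Your proof is correct and follows exactly the paper's approach: invoke the Mayer--Vietoris exact sequence for the decomposition $\tilde X_{[r_1,r_3]}=\tilde X_{[r_1,r_2]}\cup\tilde X_{[r_2,r_3]}$ with intersection $\tilde X_{r_2}$, available because $r_2$ is tame, and read off the desired $a_2$ from exactness. The paper compresses all of this into a single sentence, while you have written out the NDR justification and the diagram chase in full.
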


\begin{proof}
Since $r_2$ is a tame value, we have an exact Mayer--Vietoris sequence,
$$
H_*(\tilde X
_{r_2})\to H_*(\tilde X_{[r_1,r_2]})\oplus H_*(\tilde X_{[r_2,r_3]})\to H_*(\tilde X_{[r_1,r_3]}).
$$
This immediately gives $B_{r_2}^{r_3}B_{r_1}^{r_2}\supseteq B_{r_1}^{r_3}$. 
As the converse inclusion, $B_{r_2}^{r_3}B_{r_1}^{r_2}\subseteq B_{r_1}^{r_3}$, is obvious, the lemma follows.
\qed
\end{proof}

Fix a tame value $\theta\in S^1$ of $f$ and a lift $\tilde\theta\in\R$, $e^{\mathbf i\tilde\theta}=\theta$.
Using the projection $\tilde X\to X$, we may canonically identify $\tilde X_{\tilde\theta}=X_\theta=f^{-1}(\theta)$.
Moreover, let $\tau\colon\tilde X\to\tilde X$ denote the fundamental deck transformation,  i.e.\ $\tilde f\circ\tau=\tilde f+2\pi$. 
Note that $\tau$ induces homeomorphisms between levels, $\tau\colon\tilde X_r\to\tilde X_{r+2\pi}$, and define a linear relation 
$$
R_\theta \colon H_*(X_\theta)\leadsto H_*(X_\theta)
$$ 
as the composition
\begin{equation}\label{E:RSigma}
H_*(X_\theta)=H_*(\tilde X_{\tilde\theta})\overset{B_{\tilde\theta}^{\tilde\theta+2\pi}}\leadsto H_*(\tilde X_{\tilde\theta+2\pi})\overset{\tau_*^\dag}
\leadsto H_*(\tilde X_{\tilde\theta})=H_*(X_\theta).
\end{equation}
In other words, for $a,b\in H_*(X_\theta)$ we have $aRb$ iff $aB_{\tilde\theta}^{\tilde\theta+2\pi}(\tau_*b)$,
i.e.\ iff $a$ and $\tau_*b$ coincide in $H_*(\tilde X_{[\tilde\theta,\tilde\theta+2\pi]})$. 
Particularly, we have:

\begin{lemma}\label{L:4}
If $a,b\in H_*(X_\theta)$ and $aRb$, then $a=\tau_*b$ in $H_*(\tilde X)$.
\end{lemma}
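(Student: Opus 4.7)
The plan is to simply unfold the definition of the relation $R$ and then apply functoriality of homology with respect to the relevant inclusions.

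First I would unpack the hypothesis $aRb$. By the definition of $R$ as the composition \eqref{E:RSigma}, the statement $aRb$ is equivalent to $a B_{\tilde\theta}^{\tilde\theta+2\pi}(\tau_* b)$. By the definition of the relation $B_{\tilde\theta}^{\tilde\theta+2\pi}$, this in turn means precisely that the classes $a\in H_*(\tilde X_{\tilde\theta})$ and $\tau_* b\in H_*(\tilde X_{\tilde\theta+2\pi})$ have the same image in $H_*(\tilde X_{[\tilde\theta,\tilde\theta+2\pi]})$ under the inclusion-induced maps.

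Next, I would consider the commutative diagram of inclusions
\begin{equation*}
\xymatrix{
\tilde X_{\tilde\theta}\ar[r]\ar[dr] & \tilde X_{[\tilde\theta,\tilde\theta+2\pi]}\ar[d] & \tilde X_{\tilde\theta+2\pi}\ar[l]\ar[dl]\\
 & \tilde X &
}
\end{equation*}
and pass to homology. Because the inclusions $\tilde X_{\tilde\theta}\hookrightarrow\tilde X$ and $\tilde X_{\tilde\theta+2\pi}\hookrightarrow\tilde X$ factor through $\tilde X_{[\tilde\theta,\tilde\theta+2\pi]}$, the images of $a$ and $\tau_* b$ in $H_*(\tilde X)$ agree as soon as their images in $H_*(\tilde X_{[\tilde\theta,\tilde\theta+2\pi]})$ agree. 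Combining with the previous paragraph yields $a=\tau_* b$ in $H_*(\tilde X)$, as desired.

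There is no real obstacle here; the lemma is a direct consequence of the definitions together with the naturality of the inclusion-induced maps in singular homology, and I expect the proof to occupy only a few lines. Its role is presumably to allow one, in the sequel, to transport statements about the relation $R$ on $H_*(X_\theta)$ into statements about the action of $\tau_*$ on $H_*(\tilde X)$, i.e.\ to link the monodromy relation \eqref{E:RSigma} with the $\kappa[T^{-1},T]$-module structure on $H_*(\tilde X)$ that appears in Theorem~\ref{T1}\itemref{T1:b}.
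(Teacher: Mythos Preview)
Your proposal is correct and is exactly the argument implicit in the paper: there the lemma is stated without proof, immediately after the remark that $aRb$ iff $a$ and $\tau_*b$ coincide in $H_*(\tilde X_{[\tilde\theta,\tilde\theta+2\pi]})$, so it is treated as an obvious consequence of that remark together with functoriality of the inclusion into $\tilde X$. Your write-up simply makes this one-line observation explicit.
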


We will continue to use the notation $K_\pm$, $D_\pm$, and $R_\reg$ introduced in the previous section for this relation $R$ on $H_*(X_\theta)$.
Particularly, its regular part,  
$$
R_\reg\colon H_*(X_\theta)_\reg\to H_*(X_\theta)_\reg,
$$
is a module automorphism.

\begin{lemma}\label{L:5}
We have:
\begin{align*}
K_+&=\ker\bigl(H_*(X_\theta)\to H_*(\tilde X_{[\tilde\theta,\infty)})\bigr)
\\
K_-&=\ker\bigl(H_*(X_\theta)\to H_*(\tilde X_{(-\infty,\tilde\theta]})\bigr)
\end{align*}
Both maps are induced by the canonical inclusion $X_\theta=\tilde X_{\tilde\theta}\to\tilde X$.
\end{lemma}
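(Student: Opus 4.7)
I will handle the $K_+$ case; the $K_-$ assertion is entirely analogous, obtained by replacing $R$ with $R^\dag$ (equivalently, by exchanging the roles of $[\tilde\theta,\infty)$ and $(-\infty,\tilde\theta]$). Throughout, I will use only two ingredients: the explicit description of the relation $R$ via $\tilde X_{[\tilde\theta,\tilde\theta+2\pi]}$, and the Mayer--Vietoris exact sequence at each tame value $\tilde\theta+2\pi n$, which is available since $\theta$ is tame and the deck transformation $\tau$ preserves tame values.

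First I would prove the inclusion $K_+\subseteq\ker\bigl(H_*(X_\theta)\to H_*(\tilde X_{[\tilde\theta,\infty)})\bigr)$ directly from the definition of $R$. Suppose $v\in K_+$, so there exist $v_1,\dots,v_k\in H_*(X_\theta)$ with $vRv_1Rv_2R\cdots Rv_kR0$. By definition of $R$, the identification $vRv_1$ says $v=\tau_*v_1$ in $H_*(\tilde X_{[\tilde\theta,\tilde\theta+2\pi]})$. Applying $\tau_*^i$ to the identity $v_iRv_{i+1}$ (which lives in $H_*(\tilde X_{[\tilde\theta,\tilde\theta+2\pi]})$) gives $\tau_*^iv_i=\tau_*^{i+1}v_{i+1}$ in $H_*(\tilde X_{[\tilde\theta+2i\pi,\tilde\theta+2(i+1)\pi]})$. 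Pushing everything forward to $H_*(\tilde X_{[\tilde\theta,\tilde\theta+2(k+1)\pi]})$, the telescoping identity yields $v=\tau_*^{k+1}\cdot 0=0$ in that group, because $v_kR0$ forces $\tau_*^kv_k=0$ already in $H_*(\tilde X_{[\tilde\theta+2k\pi,\tilde\theta+2(k+1)\pi]})$. Composing with the inclusion into $\tilde X_{[\tilde\theta,\infty)}$ concludes the first inclusion.

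For the reverse inclusion I would exploit the fact that singular homology commutes with the directed union $\tilde X_{[\tilde\theta,\infty)}=\bigcup_N\tilde X_{[\tilde\theta,\tilde\theta+2N\pi]}$: any class in the kernel on the right must already vanish in some $H_*(\tilde X_{[\tilde\theta,\tilde\theta+2N\pi]})$ (a singular cycle has compact support). Given such a $v$, I will produce $v_1\in H_*(X_\theta)$ with $vRv_1$ and with $v_1$ vanishing in the smaller interval $H_*(\tilde X_{[\tilde\theta,\tilde\theta+2(N-1)\pi]})$, and then iterate. To produce $v_1$, apply Mayer--Vietoris at the tame value $\tilde\theta+2\pi$, i.e.\ to the decomposition $\tilde X_{[\tilde\theta,\tilde\theta+2N\pi]}=\tilde X_{[\tilde\theta,\tilde\theta+2\pi]}\cup\tilde X_{[\tilde\theta+2\pi,\tilde\theta+2N\pi]}$: since the image of $v$ on the left piece vanishes on the union, exactness supplies $w_1\in H_*(\tilde X_{\tilde\theta+2\pi})$ mapping to $v$ in $H_*(\tilde X_{[\tilde\theta,\tilde\theta+2\pi]})$ and to $0$ in $H_*(\tilde X_{[\tilde\theta+2\pi,\tilde\theta+2N\pi]})$. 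Setting $v_1:=\tau_*^{-1}w_1\in H_*(\tilde X_{\tilde\theta})=H_*(X_\theta)$, the first property gives $vRv_1$ and, translating the second property by $\tau_*^{-1}$, we obtain $v_1=0$ in $H_*(\tilde X_{[\tilde\theta,\tilde\theta+2(N-1)\pi]})$. Iterating $N$ times produces the chain $vRv_1R\cdots Rv_{N-1}R0$ that witnesses $v\in K_+$.

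The main obstacle is merely a bookkeeping one: one must make sure that the various identifications via $\tau_*$ land in the correct fiber and the correct interval piece, and that Mayer--Vietoris is legitimately applied at each step; both require that every angle $\tilde\theta+2\pi n$ is a tame value, which follows from tameness of $\theta$ together with $\tilde f\circ\tau=\tilde f+2\pi$. Once this is set up, the forward direction is a telescoping computation and the reverse direction is a single induction on $N$ driven by Mayer--Vietoris.
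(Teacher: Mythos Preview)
Your argument is correct and follows the same overall strategy as the paper: the forward inclusion via the telescoping identity $v=\tau_*v_1=\tau_*^2v_2=\cdots$ is essentially identical to the paper's proof. For the reverse inclusion the paper takes a slightly different route: instead of iterating Mayer--Vietoris one period at a time, it applies a single Mayer--Vietoris sequence to the global decomposition $\tilde X_{[\tilde\theta,\infty)}=U\cup V$ with $U=\bigsqcup_{k\ \text{even}}\tilde X_{[\tilde\theta+2\pi k,\tilde\theta+2\pi(k+1)]}$ and $V=\bigsqcup_{k\ \text{odd}}\tilde X_{[\tilde\theta+2\pi k,\tilde\theta+2\pi(k+1)]}$, obtaining all the witnesses $b_k$ at once and then setting $c_k=(-1)^{k-1}\tau_*^{-k}b_k$. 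Your inductive peeling-off argument is arguably cleaner in that it avoids the alternating signs and the bookkeeping of the even/odd partition; the paper's version has the mild advantage that the same global decomposition is reused verbatim in the proof of the companion Lemma on $D_\pm$ (with Novikov homology), where the direct product $\prod_k H_*(\tilde X_{\tilde\theta+2\pi k})$ replaces $\bigoplus_k$.
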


\begin{proof}
We will only show the first equality, the other one can be proved along the same lines. To see the inclusion
$K_+\subseteq\ker(H_*(X_\theta)\to H_*(\tilde X_{[\tilde\theta,\infty)}))$, let $a\in K_+$. Hence, there exist $a_k\in H_*(X_\theta)$,
almost all of which vanish, such that $aRa_1Ra_2R\cdots$. In $H_*(\tilde X_{[\tilde\theta,\tilde\theta+2\pi]})$, we thus have:
$$
a=\tau_*a_1,\quad a_1=\tau_*a_2,\quad a_2=\tau_*a_3,\quad\dotsc 
$$
In $H_*(\tilde X_{[\tilde\theta,\infty)})$, we obtain:
$$
a=\tau_*a_1=\tau_*^2a_2=\tau_*^3a_3=\cdots
$$
Since some $a_k$ have to be zero, we conclude that $a$ vanishes in $H_*(\tilde X_{[\tilde\theta,\infty)})$.

To see the converse inclusion, $K_+\supseteq\ker(H_*(\tilde X_\theta)\to H_*(\tilde X_{[\tilde\theta,\infty)}))$, set 
$$
U:=\bigsqcup_{\text{$0\leq k$ even}}\tilde X_{[\tilde\theta+2\pi k,\tilde\theta+2\pi(k+1)]},\qquad
V:=\bigsqcup_{\text{$1\leq k$ odd}}\tilde X_{[\tilde\theta+2\pi k,\tilde\theta+2\pi(k+1)]}
$$
and note that $U\cup V=\tilde X_{[\tilde\theta,\infty)}$, as well as $U\cap V=\bigsqcup_{k\in\mathbb N}\tilde X_{\tilde\theta+2\pi k}$. 
Since $\theta$ is a tame value, we have an exact Mayer--Vietoris sequence
$$
\bigoplus_{k\in\mathbb N}H_*(\tilde X_{\tilde\theta+2\pi k})=H_*\Bigl(\bigsqcup_{k\in\mathbb N}\tilde X_{\tilde\theta+2\pi k}\Bigr)\to H_*(U)\oplus H_*(V)\to H_*(\tilde X_{[\tilde\theta,\infty)}).
$$
For $b\in\ker(H_*(X_\theta)\to H_*(\tilde X_{[\tilde\theta,\infty)}))$ we thus find $b_k\in H_*(\tilde X_{\tilde\theta+2\pi k})$, almost all of which vanish, such that:
\begin{align*}
b&=b_1\in H_*(\tilde X_{[\tilde\theta,\tilde\theta+2\pi]})\\
b_1+b_2&=0\in H_*(\tilde X_{[\tilde\theta+2\pi,\tilde\theta+4\pi]})\\
b_2+b_3&=0\in H_*(\tilde X_{[\tilde\theta+4\pi,\tilde\theta+6\pi]})\\
&\,\,\,\,\vdots
\end{align*}
Putting $c_k:=(-1)^{k-1}\tau^{-k}_*b_k\in H_*(\tilde X_{\tilde\theta})$, we obtain the following equalities in $H_*(\tilde X_{[\tilde\theta,\tilde\theta+2\pi]})$:
$$
b=\tau_*c_1,\quad c_1=\tau_*c_2,\quad c_2=\tau_*c_3,\quad\dotsc
$$
In other words, we have the relations $bRc_1Rc_2Rc_3R\cdots$. Since some $c_k$ has to be zero, we conclude $b\in K_+$, whence the lemma.
\qed
\end{proof}

Introduce the upwards Novikov complex as a projective limit of relative singular chain complexes,
$$
C_*^{\Nov,+}(\tilde X):=\varprojlim_{r}C_*(\tilde X,\tilde X_{[r,\infty)}),
$$
and let $H_*^{\Nov,+}(\tilde X)$ denote its homology. 
Alternatively, $C_*^{\Nov,+}(\tilde X)$ can be described as the chain complex of formal, possibly infinite, linear combinations of singular simplices in $\tilde X$
such that the number of simplices intersecting $\tilde X_{(-\infty,r]}$ is finite, for all real values $r$.
Analogously, we define a downwards Novikov complex
$C_*^{\Nov,-}(\tilde X)=\varprojlim_r C_*(\tilde X,\tilde X_{(-\infty,r]})$ and the corresponding homology,
$H_*^{\Nov,-}(\tilde X)$. We will also use similar notation for subsets of $\tilde X$.

\begin{lemma}\label{L:6}
We have:
\begin{align*}
D_+&=\ker\bigl(H_*(X_\theta)\to H_*^{\Nov,+}(\tilde X_{[\tilde\theta,\infty)})\bigr)
\\
D_-&=\ker\bigl(H_*(X_\theta)\to H_*^{\Nov,-}(\tilde X_{(-\infty,\tilde\theta]})\bigr)
\end{align*}
Both maps are induced by the canonical inclusion $X_\theta=\tilde X_{\tilde\theta}\to\tilde X$.
\end{lemma}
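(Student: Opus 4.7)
The plan is to adapt the argument of Lemma~\ref{L:5}, treating the infinite nature of the $R$-chains defining $D_+$ and the inverse-limit definition of the Novikov complex in tandem. I only treat the statement for $D_+$; the one for $D_-$ follows by substituting $-\tilde f$ for $\tilde f$, which interchanges the roles of $D_+$ and $D_-$ (equivalently, by passing from $R$ to $R^\dag$).

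For the easy inclusion $D_+\subseteq\ker$, I would construct an explicit Novikov bounding chain. Given $a\in D_+$, pick witnesses $a_1,a_2,\ldots\in H_*(X_\theta)$ with $aRa_1Ra_2R\cdots$ and cycle representatives $\alpha_i\in C_*(X_\theta)$. The relation $a_iRa_{i+1}$ means that $\alpha_i$ and $\tau_\#(\alpha_{i+1})$ are homologous in $\tilde X_{[\tilde\theta,\tilde\theta+2\pi]}$, so there is a chain $\beta_i\in C_{*+1}(\tilde X_{[\tilde\theta,\tilde\theta+2\pi]})$ with $\partial\beta_i=\alpha_i-\tau_\#(\alpha_{i+1})$. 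The shift $\tau^i_\#(\beta_i)$ is supported in $\tilde X_{[\tilde\theta+2\pi i,\tilde\theta+2\pi(i+1)]}$, so the formal sum $\gamma:=\sum_{i\ge 0}\tau^i_\#(\beta_i)$ makes sense as an element of $C_{*+1}^{\Nov,+}(\tilde X_{[\tilde\theta,\infty)})$, since its support recedes to $+\infty$. A telescoping calculation gives $\partial\gamma=\alpha-\lim_n\tau^n_\#(\alpha_n)$, and the second term vanishes in the Novikov limit because $\tau^n_\#(\alpha_n)$ lies in $C_*(\tilde X_{[\tilde\theta+2\pi n,\infty)})$; hence $a$ maps to $0$ in $H_*^{\Nov,+}(\tilde X_{[\tilde\theta,\infty)})$.

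For the reverse inclusion $\ker\subseteq D_+$, set $D_+^n:=\dom(R^n)$, the set of $a$ admitting an $R$-chain of length $n$. A Mayer--Vietoris induction on the decomposition $\tilde X_{[\tilde\theta,\tilde\theta+2\pi n]}=\tilde X_{[\tilde\theta,\tilde\theta+2\pi]}\cup\tilde X_{[\tilde\theta+2\pi,\tilde\theta+2\pi n]}$ (in the spirit of the proof of Lemma~\ref{L:5}) shows
\[
D_+^n=\ker\bigl(H_*(X_\theta)\to H_*(\tilde X_{[\tilde\theta,\tilde\theta+2\pi n]},X_{\tilde\theta+2\pi n})\bigr),
\]
and excision identifies the target with $H_*(\tilde X_{[\tilde\theta,\infty)},\tilde X_{[\tilde\theta+2\pi n,\infty)})$. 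Since the canonical projection $C_*(\tilde X_{[\tilde\theta,\infty)})\to C_*^{\Nov,+}(\tilde X_{[\tilde\theta,\infty)})$ factors through each such relative quotient, the Novikov kernel lies in every $D_+^n$. By Lemma~\ref{L:3}, the descending chain $D_+^1\supseteq D_+^2\supseteq\cdots$ stabilizes in finite dimensions at $\bigcap_n D_+^n=D_+$, so the Novikov kernel is contained in $D_+$.

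The principal technical point is the inductive step of the Mayer--Vietoris identification $D_+^n\cong K_n$: starting from $a\in K_{n+1}$, exactness produces $c\in H_*(X_{\tilde\theta+2\pi})$ whose image agrees with $a$ on $\tilde X_{[\tilde\theta,\tilde\theta+2\pi]}$ and with $\tau_*^{n+1}a_{n+1}$ on $\tilde X_{[\tilde\theta+2\pi,\tilde\theta+2\pi(n+1)]}$; pulling back by $\tau_*^{-1}$ yields an intermediate $R$-partner $a_1=\tau_*^{-1}c\in H_*(X_\theta)$ with $aRa_1$ and $a_1\in K_n$, which by induction lies in $D_+^n$, and concatenation gives $a\in D_+^{n+1}$. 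Once this bookkeeping is in place the rest of the proof is formal.
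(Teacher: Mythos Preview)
Your argument for $D_+\subseteq\ker$ is correct and is essentially the chain-level version of the paper's approach. For the reverse inclusion, however, you take a genuinely different route. The paper argues exactly as in Lemma~\ref{L:5}, but using a Novikov Mayer--Vietoris sequence
\[
\prod_{k\in\mathbb N}H_*(\tilde X_{\tilde\theta+2\pi k})\to H_*^{\Nov,+}(U)\oplus H_*^{\Nov,+}(V)\to H_*^{\Nov,+}(\tilde X_{[\tilde\theta,\infty)}),
\]
where the direct sum $\bigoplus_k$ of Lemma~\ref{L:5} has become a direct product $\prod_k$. That product is the whole point: given $b$ in the Novikov kernel, exactness produces a family $(b_k)_{k\in\N}$ \emph{simultaneously}, and one reads off an infinite $R$-chain $bRc_1Rc_2R\cdots$ directly, with no finite-dimensionality hypothesis.

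Your route---showing the Novikov kernel lies in each $D_+^n=\dom(R^n)$ and then invoking Lemma~\ref{L:3} to conclude $\bigcap_n D_+^n=D_+$---is correct, but it requires $H_*(X_\theta)$ to be a finite-dimensional vector space, since Lemma~\ref{L:3} needs that hypothesis (in general an element admitting $R$-chains of every finite length need not admit a single infinite one). The lemma as stated in the paper is for arbitrary coefficient modules, so your argument proves a weaker statement; for the application to Theorem~\ref{T4} this is harmless, but you should be aware of the loss of generality. (A minor slip: the map $C_*\to C_*^{\Nov,+}$ does not ``factor through'' the finite relative quotients; rather, $C_*^{\Nov,+}$ \emph{projects onto} each of them, which is what you actually use.)
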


\begin{proof}
Using the exact Mayer--Vietoris sequence
$$
\prod_{k\in\mathbb N}H_*(\tilde X_{\tilde\theta+2\pi k})=
H_*^{\Nov,+}\Bigl(\bigsqcup_{k\in\mathbb N}\tilde X_{\tilde\theta+2\pi k}\Bigr)\to H_*^{\Nov,+}(U)\oplus H_*^{\Nov,+}(V)\to H_*^{\Nov,+}(\tilde X_{[\tilde\theta,\infty)}),
$$
this can be proved along the same lines as Lemma~\ref{L:5}.
\qed
\end{proof}

Let us introduce a complex
$$
C_*^\lf(\tilde X):=\varprojlim_r C_*(\tilde X,\tilde X_{(-\infty,-r]}\cup\tilde X_{[r,\infty)})
$$
and denote its homology by $H_*^\lf(\tilde X)$. If $f$ is proper, this is the complex of locally finite singular chains. 

\begin{lemma}\label{L:7}
We have:
\begin{align*}
K_-+K_+&=\ker\bigl(H_*(X_\theta)\to H_*(\tilde X)\bigr)
\\
K_-+D_+&=\ker\bigl(H_*(X_\theta)\to H_*^{\Nov,+}(\tilde X)\bigr)
\\
D_-+K_+&=\ker\bigl(H_*(X_\theta)\to H_*^{\Nov,-}(\tilde X)\bigr)
\\
D_-+D_+&=\ker\bigl(H_*(X_\theta)\to H_*^\lf(\tilde X)\bigr)
\end{align*}
All maps are induced by the canonical inclusion $X_\theta=\tilde X_{\tilde\theta}\to\tilde X$.
\end{lemma}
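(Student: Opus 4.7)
The plan is to prove all four equalities in parallel by a single Mayer--Vietoris argument applied to the cover $\tilde X=\tilde X_{(-\infty,\tilde\theta]}\cup\tilde X_{[\tilde\theta,\infty)}$ with overlap $X_\theta$. Since $\theta$ is tame, this cover is ``good'' after a standard open thickening, and each of the four theories $H_*^{?}\in\{H_*,H_*^{\Nov,+},H_*^{\Nov,-},H_*^{\lf}\}$ should fit into a Mayer--Vietoris long exact sequence
\begin{equation*}
\cdots\to H_*(X_\theta)\xrightarrow{\phi}A_-\oplus A_+\xrightarrow{\psi}H_*^{?}(\tilde X)\to H_{*-1}(X_\theta)\to\cdots
\end{equation*}
with $\phi(a)=(\iota_-(a),\iota_+(a))$ and $\psi(x,y)=j_-(x)-j_+(y)$, the summands $(A_-,A_+)$ being chosen consistently with $?$: for $?=H_*$ both are ordinary; for $?=H_*^{\Nov,+}$ one takes $(H_*(\tilde X_{(-\infty,\tilde\theta]}),H_*^{\Nov,+}(\tilde X_{[\tilde\theta,\infty)}))$; for $?=H_*^{\Nov,-}$ symmetrically; and for $?=H_*^{\lf}$ one takes $(H_*^{\Nov,-}(\tilde X_{(-\infty,\tilde\theta]}),H_*^{\Nov,+}(\tilde X_{[\tilde\theta,\infty)}))$. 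This choice is dictated by the elementary observation that $\tilde X_{(-\infty,\tilde\theta]}\cap\tilde X_{[r,\infty)}=\emptyset$ for $r>\tilde\theta$, so that, for instance, $H_*^{\Nov,+}(\tilde X_{(-\infty,\tilde\theta]})=H_*(\tilde X_{(-\infty,\tilde\theta]})$ and $H_*^{\lf}(\tilde X_{(-\infty,\tilde\theta]})=H_*^{\Nov,-}(\tilde X_{(-\infty,\tilde\theta]})$, and symmetrically on the upper side.

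With these sequences in hand, each of the four equalities is verified by the same two-step argument, invoking Lemmas~\ref{L:5} and~\ref{L:6} to identify the kernels that appear in their statements. For ``$\subseteq$'', any element of $K_\pm$ or $D_\pm$ vanishes in $A_\pm$ by the relevant lemma and hence in $H_*^{?}(\tilde X)$ after composition with $\psi$. For ``$\supseteq$'', given $a\in\ker(H_*(X_\theta)\to H_*^{?}(\tilde X))$, the element $(\iota_-(a),0)\in A_-\oplus A_+$ has image under $\psi$ equal to $j_-\iota_-(a)$, which is the image of $a$ in $H_*^{?}(\tilde X)$ and therefore vanishes. Exactness of Mayer--Vietoris yields $b\in H_*(X_\theta)$ with $\phi(b)=(\iota_-(a),0)$, i.e.\ $\iota_-(b)=\iota_-(a)$ and $\iota_+(b)=0$ in the appropriate groups. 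Lemma~\ref{L:5} (resp.\ \ref{L:6}) then identifies $b$ as belonging to the ``+'' summand ($K_+$ or $D_+$), and $a-b$ as belonging to the ``$-$'' summand ($K_-$ or $D_-$); thus $a$ lies in the desired sum.

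The main technical obstacle is establishing the four Mayer--Vietoris long exact sequences on the same footing. Each is obtained by applying $\varprojlim_r$ to the ordinary Mayer--Vietoris short exact sequence of relative chain complexes associated with the cover and with the subspaces $\tilde X_{[r,\infty)}$, $\tilde X_{(-\infty,-r]}$, or their union (intersected with each piece). Two points must be verified: first, that tameness of $\theta$ provides such a short exact sequence on chain level, which follows by the standard small-simplex argument after thickening $X_\theta$ to an open collar, using that fibers of tame maps are neighborhood deformation retracts; second, that $\varprojlim_r$ preserves short exactness. The latter holds because the transition maps in the defining inverse systems are surjective on chain level (they are quotients modulo increasing subcomplexes), so the Mittag--Leffler / $\varprojlim^1$ obstruction vanishes. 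Once these two points are settled, the uniform argument above completes the proof of all four equalities.
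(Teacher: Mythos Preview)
Your proof is correct and follows essentially the same approach as the paper: the paper invokes exactly the four Mayer--Vietoris sequences you write down (with the same mixed choices of $A_\pm$) and appeals to Lemmas~\ref{L:5} and~\ref{L:6}, while you additionally spell out the diagram chase for the inclusion ``$\supseteq$'' and justify the existence of the sequences via Mittag--Leffler. One small wording issue: in your ``$\subseteq$'' step the phrase ``after composition with $\psi$'' is misleading, since $\psi\circ\phi=0$ holds for every class; what you actually use is that the inclusion-induced map $H_*(X_\theta)\to H_*^{?}(\tilde X)$ factors as $j_+\circ\iota_+$ (and as $j_-\circ\iota_-$), so it kills anything in $\ker\iota_+$ or $\ker\iota_-$.
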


\begin{proof}
The first statement follows from the exact Mayer--Vietoris sequence
$$
H_*(\tilde X_{\tilde\theta})\to H_*(\tilde X_{(-\infty,\tilde\theta]})\oplus H_*(\tilde X_{[\tilde\theta,\infty)})\to H_*(\tilde X)
$$
and Lemma~\ref{L:5}. The second assertion follows from the exact Mayer--Vietoris sequence
$$
H_*(\tilde X_{\tilde\theta})\to H_*(\tilde X_{(-\infty,\tilde\theta]})\oplus H_*^{\Nov,+}(\tilde X_{[\tilde\theta,\infty)})\to H_*^{\Nov,+}(\tilde X)
$$
and Lemma~\ref{L:5} and \ref{L:6}. Similarly, one can check the third equality. To see the last statement
we use the exact Mayer--Vietoris sequence
$$
H_*(\tilde X_{\tilde\theta})\to H_*^{\Nov,-}(\tilde X_{(-\infty,\tilde\theta]})\oplus H_*^{\Nov,+}(\tilde X_{[\tilde\theta,\infty)})\to H_*^\lf(\tilde X)
$$
and Lemma~\ref{L:6}.
\qed
\end{proof}

\begin{lemma}\label{L:8}
We have
$$
\ker\Bigl(H_*(\tilde X)\to H_*^{\Nov,-}(\tilde X)\oplus H_*^{\Nov,+}(\tilde X)\Bigr)
\subseteq\img\bigl(H_*(\tilde X_{\tilde\theta})\to H_*(\tilde X)\bigr),
$$
where all maps are induced by the tautological inclusions.
\end{lemma}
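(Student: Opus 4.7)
The plan is to show that $[\alpha]=0$ in the relative group $H_n(\tilde X,\tilde X_{\tilde\theta})$ for every cycle $\alpha$ in the kernel in question; the lemma then follows at once from the long exact sequence of the pair $(\tilde X,\tilde X_{\tilde\theta})$. Set $A:=\tilde X_{(-\infty,\tilde\theta]}$ and $B:=\tilde X_{[\tilde\theta,\infty)}$, so $A\cap B=\tilde X_{\tilde\theta}$. Fix a cycle representative $\alpha$, and by the two vanishing hypotheses pick Novikov chains $\beta\in C_{n+1}^{\Nov,+}(\tilde X)$ and $\gamma\in C_{n+1}^{\Nov,-}(\tilde X)$ with $d\beta=\alpha=d\gamma$ in the respective Novikov complexes. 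By iterated barycentric subdivision (legitimate on the Novikov complexes since it is a chain homotopy equivalence and preserves the inverse-limit structure), one may assume every simplex of $\alpha$, $\beta$, $\gamma$ lies entirely in $A$ or in $B$, and split accordingly $\beta=\beta_A+\beta_B$ and $\gamma=\gamma_A+\gamma_B$.

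The key observation is that $\beta_A$ and $\gamma_B$ are \emph{finite} chains. Indeed, the Novikov-from-above condition on $\beta$ forces only finitely many of its simplices to meet $\tilde X_{(-\infty,\tilde\theta+1)}$, hence only finitely many to lie inside $A$; symmetrically, only finitely many simplices of $\gamma$ lie in $B$. The infinite tails of $\beta$ and $\gamma$ are thus carried entirely by $\beta_B$ and $\gamma_A$ respectively, which is precisely the configuration that will make the argument succeed.

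Now invoke Mayer--Vietoris for the relative group. Since $\tilde\theta$ is a tame value, $\tilde X_{\tilde\theta}$ has NDR neighborhoods in both $A$ and $B$, so standard excision gives a canonical isomorphism
\[
H_n(\tilde X,\tilde X_{\tilde\theta}) \;\cong\; H_n(A,\tilde X_{\tilde\theta})\oplus H_n(B,\tilde X_{\tilde\theta}),
\]
sending $[\alpha]$ to $([\alpha_A],[\alpha_B])$ for any subordinate splitting $\alpha=\alpha_A+\alpha_B$; the ambiguity of the splitting lives in $C_n(\tilde X_{\tilde\theta})$ and dies in the relative groups, so the assignment is well defined. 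Applying the splitting $\alpha=d\beta_A+d\beta_B$ from $\beta$ kills the first coordinate, since $d\beta_A$ bounds the finite chain $\beta_A\in C_{n+1}(A)$; applying the splitting $\alpha=d\gamma_A+d\gamma_B$ from $\gamma$ kills the second coordinate for the analogous reason. Since both splittings represent the same element of the direct sum, both of its coordinates must vanish, so $[\alpha]=0$ in $H_n(\tilde X,\tilde X_{\tilde\theta})$, completing the argument.

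The main obstacle will be chain-level bookkeeping: one must verify rigorously that iterated barycentric subdivision acts on the projective limits $C_*^{\Nov,\pm}$ (preserving the property that only finitely many simplices meet any given half-line), and one must confirm that the Mayer--Vietoris identification of the relative group is natural enough on chains, after subdivision, so that two different subordinate decompositions of the same cycle produce the same element of the direct sum, making the comparison of the $\beta$- and $\gamma$-decompositions at the end legitimate.
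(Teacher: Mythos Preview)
Your argument is correct, with the chain-level caveats you flag at the end being genuine but routine. The paper takes a more packaged route: it invokes a commutative diagram of Mayer--Vietoris-type exact sequences
\[
\xymatrix{
H^\lf_{*+1}(\tilde X)\ar[r]^-\partial & H_*(\tilde X)\ar[r] & H_*^{\Nov,-}(\tilde X)\oplus H_*^{\Nov,+}(\tilde X)
\\
H^\lf_{*+1}(\tilde X)\ar@{=}[u]\ar[r]^-\partial & H_*(\tilde X_{\tilde\theta})\ar[u]\ar[r] & H_*^{\Nov,-}(\tilde X_{(-\infty,\tilde\theta]})\oplus H_*^{\Nov,+}(\tilde X_{[\tilde\theta,\infty)}) \ar[u]
}
\]
in which the leftmost vertical arrow is the identity. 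A one-line diagram chase then gives the lemma: anything in the kernel on the top right is hit by the top $\partial$, hence by the bottom $\partial$, hence lies in the image of $H_*(\tilde X_{\tilde\theta})\to H_*(\tilde X)$.

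Your proof is essentially the chain-level unpacking of this diagram chase. The locally finite cycle implicit in the top $\partial$ is your $\beta-\gamma$ (the Novikov$+$ tail coming from $\beta$, the Novikov$-$ tail from $\gamma$), and your decomposition into $A$- and $B$-pieces with the finiteness of $\beta_A$ and $\gamma_B$ is precisely what makes the bottom $\partial$ land in $H_*(\tilde X_{\tilde\theta})$. The paper's version is shorter because the subdivision and splitting technicalities are absorbed once and for all into the existence of the Mayer--Vietoris sequences; your version is more elementary and self-contained, at the cost of the bookkeeping you correctly identify.
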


\begin{proof}
This follows from the following commutative diagram of exact Mayer--Vi\-e\-to\-ris sequences:
$$
\xymatrix{
H^\lf_{*+1}(\tilde X)\ar[r]^-\partial & H_*(\tilde X)\ar[r] & H_*^{\Nov,-}(\tilde X)\oplus H_*^{\Nov,+}(\tilde X)
\\
H^\lf_{*+1}(\tilde X)\ar@{=}[u]\ar[r]^-\partial & H_*(\tilde X_{\tilde\theta})\ar[u]\ar[r] & H_*^{\Nov,-}(\tilde X_{(-\infty,\tilde\theta]})\oplus H_*^{\Nov,+}(\tilde X_{[\tilde\theta,\infty)}) \ar[u]
}
$$
A similar argument was used in \cite[Lemma~2.5]{HL99b}.
\qed
\end{proof}

\begin{theorem}\label{T:monreg}
The inclusion $\iota\colon X_\theta=\tilde X_{\tilde\theta}\to\tilde X$ induces a canonical isomorphism
$$
H_*(X_\theta)_\reg=\frac D{(K_-+K_+)\cap D}
\xrightarrow\cong\ker\Bigl(H_*(\tilde X)\to H_*^{\Nov,-}(\tilde X)\oplus H_*^{\Nov,+}(\tilde X)\Bigr),
$$
intertwining $R_\reg$ with the monodromy isomorphism induced by the deck transformation $\tau\colon\tilde X\to\tilde X$ on the right hand side. 
Moreover, working with coefficients in a field, and assuming that $H_*(X_\theta)$ is finite dimensional, the common kernel on the right hand side above
coincides with
$$
\ker\bigl(H_*(\tilde X)\to H_*^{\Nov,-}(\tilde X)\bigr)
=\ker\bigl(H_*(\tilde X)\to H_*^{\Nov,+}(\tilde X)\bigr).
$$
Particularly, in this case the latter two kernels are finite dimensional too.
\end{theorem}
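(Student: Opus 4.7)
The plan is to unpack the linear-algebraic description of $V_\reg$ from Proposition~\ref{P:AA} and compare it with the topological kernels via Lemmas~\ref{L:7} and~\ref{L:8}.

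First, by Proposition~\ref{P:AA} I identify
\[
H_*(X_\theta)_\reg \;\cong\; \frac{(K_-+D_+)\cap(D_-+K_+)}{K_-+K_+}.
\]
Lemma~\ref{L:7} interprets the three relevant submodules $K_-+K_+$, $K_-+D_+$, $D_-+K_+$ of $H_*(X_\theta)$ as the kernels of the inclusion-induced maps from $H_*(X_\theta)$ into $H_*(\tilde X)$, $H_*^{\Nov,+}(\tilde X)$, and $H_*^{\Nov,-}(\tilde X)$, respectively. Since each Novikov map factors through $\iota_*\colon H_*(X_\theta)\to H_*(\tilde X)$, the latter descends to an injection $\bar\iota_*\colon H_*(X_\theta)/(K_-+K_+)\hookrightarrow H_*(\tilde X)$, and $\bar\iota_*$ carries the subspace $((K_-+D_+)\cap(D_-+K_+))/(K_-+K_+)$ onto
$\img(\iota_*)\cap\ker\bigl(H_*(\tilde X)\to H_*^{\Nov,-}(\tilde X)\oplus H_*^{\Nov,+}(\tilde X)\bigr)$.
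Lemma~\ref{L:8} tells us that this common kernel already lies in $\img(\iota_*)$, so the intersection equals the full common kernel. This produces the canonical isomorphism claimed in the first part of the theorem.

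The intertwining with the monodromy is immediate from Lemma~\ref{L:4}: if $aRb$ then $\iota_*(a)=\tau_*\iota_*(b)$ in $H_*(\tilde X)$, and since $R_\reg$ is an isomorphism this identifies $R_\reg$ with the restriction of (the appropriate power of) $\tau_*$ to the common kernel, a subspace evidently invariant under the deck transformation.

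For the moreover statement, assume coefficients in a field and $\dim H_*(X_\theta)<\infty$. Proposition~\ref{P:X} then yields $D_+=D+K_+$ and $D_-=K_-+D$, whence
\[
K_-+D_+ \;=\; K_-+D+K_+ \;=\; D_-+K_+.
\]
Via Lemma~\ref{L:7} this translates into $\iota_*^{-1}\bigl(\ker(H_*(\tilde X)\to H_*^{\Nov,+}(\tilde X))\bigr)=\iota_*^{-1}\bigl(\ker(H_*(\tilde X)\to H_*^{\Nov,-}(\tilde X))\bigr)$, so the two kernels already agree after intersecting with $\img(\iota_*)$ and both coincide with the common kernel there. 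The main obstacle is to promote this equality from $\img(\iota_*)$ to all of $H_*(\tilde X)$. I would accomplish this by a five-lemma comparison of the ordinary Mayer--Vietoris sequence for $\tilde X=\tilde X_{(-\infty,\tilde\theta]}\cup\tilde X_{[\tilde\theta,\infty)}$ against its Novikov-$+$ (respectively Novikov-$-$) counterpart, exactly as in the proof of Lemma~\ref{L:8}, chasing a class $x\in\ker(H_*(\tilde X)\to H_*^{\Nov,+}(\tilde X))$ through the diagram to locate it in $\img(\iota_*)$; the finite dimensionality of $H_*(X_\theta)$ is what makes the chase terminate and simultaneously shows that the common kernel, being a quotient of $V_\reg$, is finite dimensional.
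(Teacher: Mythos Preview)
Your argument for the first part—the isomorphism and the intertwining—is correct and coincides with the paper's proof: both combine Proposition~\ref{P:AA}, Lemma~\ref{L:7}, Lemma~\ref{L:8}, and Lemma~\ref{L:4} in exactly the way you describe.

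The gap is in the ``moreover'' part. Your observation that Proposition~\ref{P:X} forces $K_-+D_+=D_-+K_+$, hence that the two Novikov kernels agree after intersecting with $\img(\iota_*)$, is correct and is a clean way to organize that step. But the remaining task—showing that $\ker\bigl(H_*(\tilde X)\to H_*^{\Nov,+}(\tilde X)\bigr)$ is contained in the common kernel—cannot be done by a diagram chase of the kind you sketch. A comparison of the ordinary Mayer--Vietoris sequence with its Novikov-$+$ analogue does \emph{not} have equal leftmost terms (one is $H_{*+1}(\tilde X)$, the other $H_{*+1}^{\Nov,+}(\tilde X)$), so the mechanism that makes Lemma~\ref{L:8} work is absent, and there is no reason a priori why an arbitrary $x$ in the one-sided Novikov kernel should lie in $\img(\iota_*)$.

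What the paper does instead is exploit the deck transformation: given $a$ in the Novikov-$+$ kernel, choose $k$ so that $\tau_*^k a$ lies in the image of $H_*(\tilde X_{(-\infty,\tilde\theta]})\to H_*(\tilde X)$ (possible because any singular cycle has compact support). Since the Novikov-$+$ kernel is $\tau_*$-invariant, the Mayer--Vietoris sequence
\[
H_*(\tilde X_{\tilde\theta})\to H_*(\tilde X_{(-\infty,\tilde\theta]})\oplus H_*^{\Nov,+}(\tilde X_{[\tilde\theta,\infty)})\to H_*^{\Nov,+}(\tilde X)
\]
then forces $\tau_*^k a\in\iota_*(D_+)$. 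Now your equality $\iota_*(D_+)=\iota_*(D_-)$ (from $D_\pm=D+K_\pm$ and $K_\pm\subseteq\ker\iota_*$) shows $\tau_*^k a$ lies in the common kernel; finally $\tau_*$-invariance of the common kernel gives $a$ itself. The missing ingredient in your sketch is precisely this use of $\tau$; the finite dimensionality of $H_*(X_\theta)$ enters only through Proposition~\ref{P:X}, not by ``terminating'' a chase.
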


\begin{proof}
It follows immediately from Lemma~\ref{L:7} and \ref{L:8} that $\iota_*\colon H_*(X_\theta)\to H_*(\tilde X)$ induces an isomorphism
$$
\frac{(K_-+D_+)\cap(D_-+K_+)}{K_-+K_+}\xrightarrow\cong\ker\Bigl(H_*(\tilde X)\to H_*^{\Nov,-}(\tilde X)\oplus H_*^{\Nov,+}(\tilde X)\Bigr).
$$
In view of Lemma~\ref{L:4}, this isomorphism intertwines the isomorphism induced by $R$ on the left hand side,
with the monodromy isomorphism on the right hand side. Combining this with Proposition~\ref{P:AA} we obtain the first assertion. 
For the second statement it suffices to show
\begin{equation}\label{E:543}
\ker\bigl(H_*(\tilde X)\to H_*^{\Nov,+}(\tilde X)\bigr)
\subseteq\ker\Bigl(H_*(\tilde X)\to H_*^{\Nov,-}(\tilde X)\oplus H_*^{\Nov,+}(\tilde X)\Bigr),
\end{equation}
as the converse inclusion is obvious, and the corresponding statement for the 
downward Novikov homology can be derived analogously.
To this end, suppose $a\in\ker\bigl(H_*(\tilde X)\to H_*^{\Nov,+}(\tilde X)\bigr)$.
Then there exists $k$ such that $\tau^k_*a$ is contained in the image of $H_*(\tilde X_{(-\infty,\tilde\theta]})\to H_*(\tilde X)$.
Using the exact Mayer--Vietoris sequence
$$
H_*(\tilde X_{\tilde\theta})\to H_*(\tilde X_{(-\infty,\tilde\theta]})\oplus H_*^{\Nov,+}(\tilde X_{[\tilde\theta,\infty)})\to H_*^{\Nov,+}(\tilde X)
$$
we conclude, that $\tau_*^ka$ is contained in the image of $H_*(\tilde X_{\tilde\theta})\to H_*(\tilde X)$.
Thus $\tau_*^ka$ is contained in $\iota_*(D_+)$, see Lemma~\ref{L:7}.
Since $H_*(X_\theta)$ is assumed to be a finite dimensional vector space, we have
$\iota_*(D_-)=\iota_*(D)=\iota_*(D_+)$, see~\eqref{E:12}. Using Lemma~\ref{L:7} we thus conclude 
$\tau_*^ka$ is contained in the kernel on the right hand side of \eqref{E:543}.
Since this common kernel is invariant under the isomorphism
$\tau_*\colon H_*(\tilde X)\to H_*(\tilde X)$, we conclude that $a$ has to be contained in the common
kernel too, whence the theorem.
\qed
\end{proof}

We conclude this section with a proof of Theorem~\ref{T4}.
Suppose $X$ is a compact ANR and let $f\colon X\to S^1$ be a tame map as in Section~\ref{S1}.
Fix regular and critical angles, $0<t_1<\theta_1<\cdots<t_m<\theta_m\leq2\pi$, and consider the associated $G_{2m}$-representation $\rho_r=\{V_i,\alpha_i,\beta_i\}$, see Section~\ref{SS3}.
Note that the linear relation $R^\theta_r$ on $H_r(X_\theta)$ introduced in Section~\ref{S1} is just the degree $r$ part of the relation considered in this section, see \eqref{E:RSigma}.
From Lemma~\ref{L:MV} we immediately obtain:

\begin{lemma}
The following equalities of relations on $H_r(X_\theta)$ hold true:
\begin{enumerate}[(a)]
\item 
If $\theta=\theta_i$, then 
$R_r^\theta=\alpha_i\beta_{i-1}^\dag\alpha_{i-1}\cdots\beta_1^\dag\alpha_1\beta_m^\dag\alpha_m\cdots\alpha_{i+2}\beta_{i+1}^\dag\alpha_{i+1}\beta_i^\dag$.
\item
If $\theta=t_i$, then 
$R_r^\theta=\beta_{i-1}^\dag\alpha_{i-1}\beta_{i-2}^\dag\cdots\beta_1^\dag\alpha_1\beta_m^\dag\alpha_m\cdots\beta_{i+1}^\dag\alpha_{i+1}\beta_i^\dag\alpha_i$.
\end{enumerate}
\end{lemma}

\begin{lemma}
Suppose $\rho=\{V_i,\alpha_i,\beta_i\}$ is a $G_{2m}$-representation with Jordan blocks $\bigoplus_{J\in\mathcal J}T(J)$.
Then, for all $1\leq i\leq m$, the following hold true:
\begin{enumerate}[(a)]
\item 
$\bigl(\alpha_i\beta_{i-1}^\dag\alpha_{i-1}\cdots\beta_1^\dag\alpha_1\beta_m^\dag\alpha_m\cdots\alpha_{i+2}\beta_{i+1}^\dag\alpha_{i+1}\beta_i^\dag\bigr)_\reg$
is conjugate to $\bigoplus_{J\in\mathcal J}T(J)$.
\item
$\bigl(\beta_{i-1}^\dag\alpha_{i-1}\beta_{i-2}^\dag\cdots\beta_1^\dag\alpha_1\beta_m^\dag\alpha_m\cdots\beta_{i+1}^\dag\alpha_{i+1}\beta_i^\dag\alpha_i\bigr)_\reg$ 
is conjugate to $\bigoplus_{J\in\mathcal J}T(J)$.
\end{enumerate}
\end{lemma}

\begin{proof}
W.l.o.g.\ it suffices to consider an indecomposable representation $\rho$.
For such a $\rho$, however, the statement follows immediately from the classification of indecomposable representations discussed in Section~\ref{S2}, see also Proposition~\ref{P:C}.
\qed
\end{proof}

Combining the preceding two lemmas, we obtain Theorem~\ref{T4}.

\section{Proof of Theorem~\ref{T7}}\label {S9}

Suppose $f\colon X\to \mathbb S^1$ is a tame map. 
For $0<\theta' \leq \theta'' \leq 2\pi$ we will use the notation $X_{[\theta',\theta'']}:=f^{-1}([\theta',\theta''])$, and write $X(\theta):=X_{[\theta,\theta]}= f^{-1}(\theta)$.

Let $0<\theta_1<\theta_2<\cdots<\theta_N\leq 2\pi$ be the collection of all critical values and put $\epsilon(f):=\min\{|\theta_{i+1}-\theta_i|:1\leq i\leq N\}$ where $\theta_{N+1}:=\theta_1+2\pi$.
Note that any bar code $I\in \widetilde {\mathcal B}_r(f)$ has the left end of the form $\theta_i+ 2\pi k$ and the right end of the form $\theta_j +2\pi k'$ where $i,j\in\{1,\dotsc,N\}$ and $k,k'\in\mathbb Z$.
Put $l(I):=\theta_i$ and $r(I):=\theta_j$.
The numbers $l(I)$ and $r(I)$ are well defined for barcodes in $\mathcal B_r(f)$ which can be considered as equivalency classes of elements in $\widetilde {\mathcal B}_r(f)$.

\begin{proposition}\label {P71}
For any tame map $f\colon X\to \mathbb S^1$ and $0<\epsilon<\epsilon(f)$ we have:
\begin{multline*}
\dim H_r\bigl(X_{[\theta_i-\epsilon,\theta_i+\epsilon]},X(\theta_i-\epsilon)\bigr)
\\=\sharp\{I\in{\mathcal B}^c_r(f) \mid l(I)=\theta_i\}
+\sharp\{I\in{\mathcal B}^o_{r-1}(f) \mid r(I)=\theta_i\}
\\+\sharp\{I\in{\mathcal B}^{co}_r(f) \mid l(I)=\theta_i\}
+\sharp\{I\in{\mathcal B}^{co}_{r-1}(f) \mid r(I)=\theta_i\}.
\end{multline*}
\end{proposition}

\begin{proof}
By the long exact homology sequence of the pair $(X_{[\theta_i-\epsilon, \theta_i+\epsilon]}, X{(\theta_i-\epsilon)})$, 
\begin{multline}\label{E:ytrewq}
\dim H_r\bigl(X_{[\theta_i -\epsilon, \theta_i +\epsilon]}, X{(\theta_i-\epsilon)}\bigr)
\\=\dim\coker\Bigl(H_r(X{(\theta_i-\epsilon)})\xrightarrow{i_r}H_r\bigl(X_{[\theta_i-\epsilon, \theta_i+\epsilon]})\Bigr)
\\+\dim\ker\Bigl(H_{r-1}(X{(\theta_i-\epsilon)})\xrightarrow{i_{r-1}}H_{r-1}(X_{[\theta_i-\epsilon, \theta_i+\epsilon]})\Bigr).
\end{multline}
According to Proposition~\ref{P36}, there exist isomorphisms $\omega_1(r)$ and $\omega_2(r)$ such that the diagram
$$
\xymatrix{
H_r(X(\theta_i-\epsilon))\ar[r]^-{i_r}&H_r(X_{[\theta_i-\epsilon,\theta_i+\epsilon]})
\\
\kappa[S_1(r)]\ar[u]_\cong^-{\omega_1(r)}\ar[r]&\kappa[S_2(r)]\ar[u]^\cong_-{\omega_2(r)}
}
$$
commutes, where
\begin{align*}
S_1(r)&=\{I\in\widetilde{\mathcal B}_r(f)\mid\theta_i-\epsilon\in I\}\sqcup\widetilde{\mathcal J}_r(f),
\\
S_2(r)&=\{I\in\widetilde{\mathcal B}_r(f)\mid\theta_i\in I\}\sqcup\widetilde{\mathcal J}_r(f),
\end{align*}
and the lower horizontal arrow in the diagram denotes the canonical map associated with the subsets $S_1(r)$ and $S_2(r)$ of $\widetilde{\mathcal B}_r(f)\sqcup\widetilde{\mathcal J}_r(f)$.
From this description one readily obtains $\coker(i_r)\cong\kappa[S_3(r)]$ and $\ker(i_{r-1})\cong\kappa[S_4(r)]$, where
\begin{align*}
S_3(r)&=\{I\in\mathcal B^c_r\mid l(I)=\theta_i\}\sqcup\{I\in\mathcal B^{co}_r\mid l(I)=\theta_i\},
\\
S_4(r)&=\{I\in\mathcal B^o_{r-1}\mid r(I)=\theta_i\}\sqcup\{I\in\mathcal B^{co}_{r-1}\mid r(I)=\theta_i\},
\end{align*}
and thus
\begin{align*}
\dim\coker(i_r)&=\sharp\{I\in\mathcal B^c_r\mid l(I)=\theta_i\}+\sharp\{I\in\mathcal B^{co}_r\mid l(I)=\theta_i\},
\\
\dim\ker(i_{r-1})&=\sharp\{I\in\mathcal B^o_{r-1}\mid r(I)=\theta_i\}+\sharp\{I\in\mathcal B^{co}_{r-1}\mid r(I)=\theta_i\}.
\end{align*}
Combining these equations with \eqref{E:ytrewq} we obtain the proposition.
%
\end{proof}

Let $M$ be a closed manifold of dimension $n$, and suppose $f\colon M\to \mathbb S^1$ is a Morse map, i.e., all critical points are non-degenerated. 
Let $\mathcal X(f)$ denote the set of critical points of $f$.
For $r=0,\dotsc,n$ and $i=1,\dotsc,N$ let
$$
\mathcal X_{r,i}(f):=\bigl\{x\in\mathcal X(f):\textrm{$\operatorname{ind}(x)=r$ and $f(x)=\theta_i$}\bigr\}
$$ 
denote the set of critical points of Morse index $r$ corresponding to the critical value $\theta_i$.
Moreover, put $c_{r,i}:=\sharp \mathcal X_{r,i}$.

Recall that the Morse Lemma, see \cite[Lemma~2.2]{Mi3}, asserts that for every non-degenerate critical point $x$ of $f$ there exists an open neighborhood $U_x$ of $0$ in $\mathbb R^n$ 
and a diffeomorphism onto its image, $\varphi_x\colon U_x\to M$, such that $\varphi_x(0)=x$ and
$$
f(\varphi_x(t_1,\dotsc,t_n))=f(x)-t_1^2-\cdots-t_k^2+t^2_{k+1}+\cdots+t_n^2
$$  
holds for all $(t_1,\dotsc,t_n)\in U_x$, where $k=\operatorname{ind}_f(x)$.
In particular, $\mathcal X(f)$ is finite, for $M$ is assumed to be compact.
We fix Morse coordinates $\varphi_x\colon U_x\to M$ as above for every critical point $x\in\mathcal X(f)$.
Moreover, we assume $0<\epsilon<\epsilon(f)$ 
\footnote{A Morse map is tame when $M$ is compact.}
is sufficiently small such that 
$D^n(\epsilon):=\{(t_1,\dotsc,t_n):\sum_{i+1}^nt_i^2\leq\epsilon\}\subseteq U_x$ for all $x\in\mathcal X(f)$.

%

\begin{proposition}[Morse theorem]\label{P72}   
If $\epsilon>0$ is sufficiently small, then
$$
\dim H_r\bigl(M_{[\theta_i -\epsilon, \theta_i+\epsilon]},M{(\theta_i-\epsilon)}\bigr)=c_{r,i},
$$
for every $r=0,\dotsc,n$ and all $i=1,\dotsc,N$.
\end{proposition}

The proof of this proposition can be found in any book in Morse theory, see for instance \cite[Section~\S5]{Mi3}. The idea is simple. 
For every critical point $x\in\mathcal X(f)$ one defines $B_x:=\varphi_x(D^{\operatorname{ind}(x)}(\epsilon))$, 
where $D^k(\epsilon):=\bigl\{(t_1,\dotsc,t_k,0,\dotsc,0):\sum_{i=1}^kt_i^2 \leq \epsilon\bigr\}$. 
For each $i=1,\dotsc,N$ one considers 
$$
X(i):=M{(\theta_i-\epsilon)}\cup\bigcup_{x\in\mathcal X(f):f(x)=\theta_i}B_x\subseteq M_{[\theta_i -\epsilon, \theta_i+\epsilon]}.
$$
As in \cite[Section~\S3]{Mi3} one can verify that $M_{[\theta_i-\epsilon,\theta_i+\epsilon]}$ retracts by deformation to $X(i)$.
The deformation is obtained using the flow of the gradient vector field $-\operatorname{grad}_g(f)$ where $g$ is a conveniently choosen Riemannian metric.
Consequently,
\begin{multline*}
H_r\bigl(M_{[\theta_i-\epsilon,\theta_i+\epsilon]},M{(\theta_i-\epsilon)}\bigr)
=H_r\bigl(X(i),M{(\theta_i-\epsilon)}\bigr)
\\=\bigoplus_{x\in\mathcal X_{r,i}(f)}H_r\bigl(D^r_x(\epsilon),\partial D^r_x(\varepsilon)\bigr)
\cong\kappa^{c_{r,i}}.
\end{multline*}

From Propositions~\ref{P71} and \ref {P72} we get
$$
c_r(f)=\sum_{i=1}^Nc_{r,i}=\sharp\mathcal B_r^c(f)+\sharp\mathcal B_{r-1}^o(f)+\sharp\mathcal B_r^{co}(f)+\sharp\mathcal B_{r-1}^{co}(f).
$$
Combining this with Theorem~\ref{T1}\itemref{T1:a}, we obtain the statement for angle-vcalued maps in Theorem~\ref{T7}.
A real-valued map can be viewed as an angle-valued map after composition with an embedding of $\mathbb R$ in $\mathbb S^1$.
In this case the Novikov--Betti numbers coincide with the Betti numbers, whence the statement for real-valued maps in Theorem~\ref{T7} follows from the statement for angle-valued maps.

\appendix
\addcontentsline{toc}{section}{Appendices}

\section{An example}\label{App1}

Consider the space $X$ obtained from $Y$ described in Figure~\ref{cv-ex} by identifying its right end $Y_1$ (a union of three circles) to the left end 
$Y_0$ (a union of three circles) following the map $\phi\colon Y_1\to Y_0$ given by the matrix 
\begin{equation*}
\begin{pmatrix}
3&3 & 0\\
2&3&-1\\
1 & 2&3            
\end{pmatrix}.
\end{equation*}
The meaning of this matrix as a map $\phi$ is the following: circle (1) is divided in 6 parts, circle (2) in 8 parts and and circle (3) in 4 parts; 
the first three parts of circle (1) wrap clockwise around circle (1) to cover it three times, the next two wrap clockwise around circle (2) to cover it twice 
and around circle three to cover it three times. Similarly circle (2) and (3) wrap over circles (1), (2) and (3) as indicated by the matrix. 
The first part of circle (3) wraps counterclockwise around circle (2).
The map $f\colon X\to S^1$ is induced by the projection of $Y$ on the interval $[0,2\pi]$.

\begin{figure}
\includegraphics{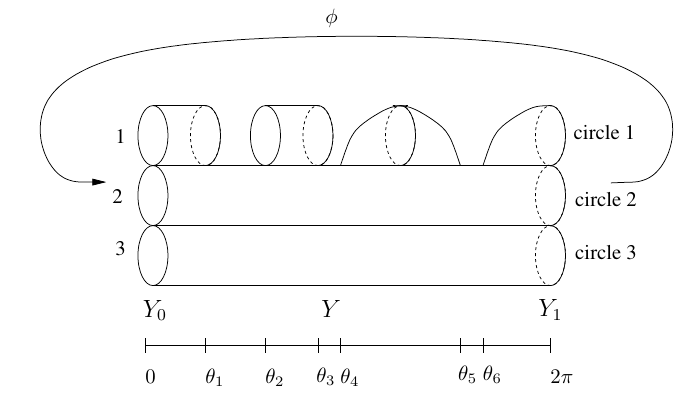}
\caption{Example of $r$-invariants for a circle valued map.}
\label{cv-ex}
\end{figure}

\subsubsection*{The critical angles} 

Clearly the critical angles of $f$ are 
$$
\{ \theta_0=0= 2\pi, \theta_1, \theta_2, \theta_2, \theta_3, \theta_4, \theta_5, \theta_6\}.
$$

\subsubsection*{The Jordan blocks} 

The $r$-monodromy of $f$ calculated at $\theta=0$ is given by the regular part of the linear relation $R( A_r, B_r)$ with 
$ A_r:= \phi_r\colon H_r (Y_1)\to H_r (Y)$ induced by $\phi$ and $B_r:=i_r\colon H_r(Y_1)\to H_r(Y)$ induced by the inclusion $Y_1\subset Y$. 
Since $H_2(Y_1)= 0$ there is no monodromy  for $r=2$ and for $r=0$  one has $R_\reg(A_0, B_0)=\Id$ which leads to 
$$
\mathcal J_0(f) =\{(1,1)\}.
$$
For $r=1$ the reader can see from the picture above that $H_1(Y_1)=\kappa^3$ generated by the circles 1, 2, 3, and $H_1(Y)=\kappa^4$ generated by the circles 1, 2, 3, 
and an additional generator coming from the small cylinder above $[\theta_2, \theta_3]$. In this case 
$$
A_1=
\begin{pmatrix}
3&3 & 0\\
2&3&-1\\
1 & 2&3\\            
0& 0& 0\\
\end{pmatrix} 
\qquad\text{and}\qquad
B_1=
\begin{pmatrix}
0& 0& 0\\
0& 1& 0\\
0& 0& 1\\
0& 0& 0\\
\end{pmatrix}.
$$
Let 
$$
A=
\begin{pmatrix}
3&3 & 0\\
2&3&-1\\
1 & 2&3\\            
\end{pmatrix} 
\qquad\text{and}\qquad
B=
\begin{pmatrix}
0& 0& 0\\
0& 1& 0\\
0& 0& 1\\
\end{pmatrix}.
$$
In view of Observation~\ref{O87} one has $R(A_1, B_1)=R(A,B)$, and since $A$ is invertible, 
$$
R_\reg( A_1, B_1)=R_\reg(A,B)=R_\reg(\Id,A^{-1}B)=(R_\reg(A^{-1}B,\Id))^{-1}\cong\begin{pmatrix} 2&1\\0&2\\\end{pmatrix},
$$
hence 
$$
\mathcal J_1(f)= \{(2,2)\}.
$$
 
\subsubsection*{The bar codes} 

In view of Proposition~\ref{P36}(b) by inspections of $f^{-1}([\theta, \theta'])$ one concludes that $\mathcal B_0(f)=\emptyset$,  $\mathcal B_2(f)= \emptyset$, and in dimension 1 one has: 
one \emph{closed} bar code $[\theta_2, \theta_3]$;   
one \emph{open} bar code $(\theta_4, \theta_5)$; and 
one \emph{open-closed} bar code $(\theta_6, \theta_1 +2\pi]$.

\section{More examples}\label{App2}

The three examples below support the comments \itemref{C:a}, \itemref{C:b}, and \itemref{C:c} about the differences between the stability results Theorems~\ref{T2} and \ref{T5} in the introduction. 
In all three examples we construct a $1$-parameter family of maps, $f_\epsilon\colon X\to\mathbb S^1$, and analyze the dependence of the barcode structure on the parameter $\epsilon$.

As in Appendix~\ref{App1}, the maps in these examples are derived from maps $\overline f_\epsilon\colon Y\to[0,2\pi]$ by identifying $\overline f_\epsilon^{-1}(0)$ with $\overline f_\epsilon^{-1}(2\pi)$ and $0$ with $2\pi$, respectively.

\begin{example}\label{Ex:B1}
Take $Y=[0,2\pi]$.
For $0\leq\epsilon<\pi$ define a map $\overline f_\epsilon\colon[0,2\pi]\to [0,2\pi]$ by
\begin{equation*}
\overline f_\epsilon(x):=
\begin{cases}
\frac{\pi+\epsilon}{\pi-\epsilon}x&\text{if $0\leq x\leq \pi-\epsilon$,}\\
-x+2\pi&\text{if $\pi-\epsilon \leq x\leq \pi+\epsilon$, and}\\
\frac{\pi+\epsilon}{\pi-\epsilon}x-\frac{4\pi\epsilon}{\pi-\epsilon}&\text{if $\pi+\epsilon\leq x\leq 2\pi$.}
\end{cases}
\end{equation*}
Figure~\ref{Fig:B1} displays the graph of $\overline f_\epsilon$.
\begin{figure}
\begin{center}
\begin{tikzpicture}
\draw [<->]  (0,5) -- (0,0) -- (5,0);
\draw [thick] (0,0) -- (1.8,2.2);
\draw [thick] (1.8,2.2) -- (2.2,1.8);
\draw [thick] (2.2,1.8) -- (4,4);
\node at (0.7,4.7) {$\overline f_\epsilon(x)$};
\node at (4.7,0.3) {$x$};
\draw [dotted] (4,4) -- (0,4);
\draw [dotted] (4,4) -- (4,0);
\draw [dotted] (2.2,1.8) -- (2.2,0);
\draw [dotted] (2.2,1.8) -- (0,1.8);
\draw [dotted] (1.8,2.2) -- (0,2.2);
\draw [dotted] (1.8,2.2) -- (1.8,0);
\node at (-0.7,4) {2$\pi$};
\node at (-0.7,2.2) {$\pi$+$\epsilon$};
\node at (-0.7,1.8) {$\pi$-$\epsilon$};
\node at (1.7,-0.3) {$\pi$-$\epsilon$};
\node at (2.4,-0.3) {$\pi$+$\epsilon$};
\node at (4,-0.3) {2$\pi$};
\end{tikzpicture}
\end{center}
\caption{The graph of the map $\overline f_\epsilon\colon[0,2\pi]\to[0,2\pi]$ in Example~\ref{Ex:B1}.}
\label{Fig:B1}
\end{figure}
Clearly, $f_\epsilon$ can be made is arbitrary closed to $f_0$.
For $\epsilon=0$ there are no barcodes and only one Jordan cell, $\mathcal J_0(f_0)=\{(\kappa^1,\operatorname{id})\}$.
For $0<\epsilon<\pi$ we have the same Jordan block, $\mathcal J_0(f_\epsilon)=\{(\kappa^1,\operatorname{id})\}$, and in addition two barcodes, $\mathcal B_0(f_\epsilon)=\{[\pi-\epsilon,\pi+\epsilon),(\pi-\epsilon,\pi+\epsilon]\}$.
\end{example}

\begin{example}\label{Ex:B2}
Take $Y=Y_1\cup Y_2\cup Y_3\subseteq\mathbb R^2$, where
\begin{align*}
Y_1&=\{(x,0)\mid0\leq x\leq 2\pi\},
\\
Y_2&=\{(x,y)\mid(x-\pi/2)^2+y^2=(\pi/4)^2,y\geq0\},\,\,\text{and}
\\
Y_3&=\{(x,y)\mid y=x-\pi,\,5\pi/4\leq x\leq7\pi/4\}.
\end{align*}
For $0\leq\epsilon\leq\pi/4$ define $\overline f_\epsilon\colon Y\to[0,2\pi]$ as composition $\overline f_\epsilon=l_\epsilon\circ p$, where $p\colon Y\to [0,2\pi]$ denotes the coordinate projection given by $p(x,y)=x$ and $l_\epsilon\colon[0,2\pi]\to[0,2\pi]$ is the piecewise linear map defined by
$$
l_\epsilon(t):=
\begin{cases}
\frac{t}{\pi/4}(\pi/4+\epsilon)&\text{if $0\leq t\leq\pi/4$,}\\
\frac{3\pi/4-t}{\pi/2}(\pi/4+\epsilon)+\frac{t-\pi/4}{\pi/2}(3\pi/4-\epsilon)&\text{if $\pi/4\leq t\leq3\pi/4$,}\\
\frac{5\pi/4-t}{\pi/2}(3\pi/4-\epsilon)+\frac{t-3\pi/4}{\pi/2}(5\pi/4+\epsilon)&\text{if $3\pi/4\leq t\leq5\pi/4$,}\\
\frac{7\pi/4-t}{\pi/2}(5\pi/4+\epsilon)+\frac{t-5\pi/4}{\pi/2}(7\pi/4-\epsilon)&\text{if $5\pi/4\leq t\leq7\pi/4$, and}\\
\frac{2\pi-t}{\pi/4}(7\pi/4-\epsilon)+\frac{t-7\pi/4}{\pi/4}2\pi&\text{if $7\pi/4\leq t\leq2\pi$.}
\end{cases}
$$
The graph of the map $l_\epsilon$ is displayed in Figure~\ref{Fig:B2b}, the space $Y$ and the map $\overline f_\epsilon$ are indicated in Figure~\ref{Fig:B2}.
\begin{figure}
\begin{center}
\begin{tikzpicture}
\draw [<->]  (0,5) -- (0,0) -- (5,0);
\draw [dotted] (0,0) -- (5,5);
\draw [thick] (0,0) -- (0.5,0.8);
\draw [thick] (0.5,0.8) -- (1.5,1.2);
\draw [thick] (1.5,1.2) -- (2.5,2.8);
\draw [thick] (2.5,2.8) -- (3.5,3.2);
\draw [thick] (3.5,3.2) -- (4,4);
\node at (0.7,4.7) {$l_\epsilon(t)$};
\node at (4.7,0.3) {$t$};
\draw [dotted] (4,4) -- (0,4);
\draw [dotted] (4,4) -- (4,0);
\draw [dotted] (0.5,0) -- (0.5,0.8);
\draw [dotted] (0.5,0.8) -- (0,0.8);
\draw [dotted] (1.5,0) -- (1.5,1.2);
\draw [dotted] (1.5,1.2) -- (0,1.2);
\draw [dotted] (2.5,0) -- (2.5,2.8);
\draw [dotted] (2.5,2.8) -- (0,2.8);
\draw [dotted] (3.5,0) -- (3.5,3.2);
\draw [dotted] (3.5,3.2) -- (0,3.2);
\node at (0,-0.3) {$0$};
\node at (0.5,-0.3) {$\frac\pi4$};
\node at (1.5,-0.3) {$\frac{3\pi}4$};
\node at (2.5,-0.3) {$\frac{5\pi}4$};
\node at (3.5,-0.3) {$\frac{7\pi}4$};
\node at (4,-0.3) {$2\pi$};
\node at (-0.6,0) {$0$};
\node at (-0.6,0.75) {$\frac\pi4+\epsilon$};
\node at (-0.6,1.25) {$\frac{3\pi}4-\epsilon$};
\node at (-0.6,2.75) {$\frac{5\pi}4+\epsilon$};
\node at (-0.6,3.25) {$\frac{7\pi}4-\epsilon$};
\node at (-0.6,4) {$2\pi$};
\end{tikzpicture}
\end{center}
\caption{The graph of the map $l_\epsilon\colon[0,2\pi]\to[0,2\pi]$ in Example~\ref{Ex:B2}.}
\label{Fig:B2b}
\end{figure}
\begin{figure}
\begin{center}
\begin{tikzpicture}
\draw [thick] (2,0) to [out=90,in=180] (3,1);
\draw [thick] (3,1) to [out=0,in=90] (4,0);
\draw (1,0) -- (9,0);
\draw (6,0) -- (8,2);
\node at (1,-0.5) {0};
\node at (3,-0.5) {$\pi$/2};
\node at (7,-0.5) {$3\pi$/2};
\draw (1,-1.5) -- (9,-1.5);
\draw [->] (5,-0.4) -- (5,-1.2);
\node at (5.4, -0.8) {$\overline{f}_{\epsilon}$};
\node at (9,-0.5) {2$\pi$};
\end{tikzpicture}
\end{center}
\caption{The map $\overline f_\epsilon\colon Y\to[0,2\pi]$ in Example~\ref{Ex:B2}.}
\label{Fig:B2}
\end{figure}
For $0\leq\epsilon<\pi/4$ we have one Jordan cell, $\mathcal J_0(f_\epsilon)=\{(\kappa^1,\operatorname{id})\}$, two $0$-barcodes, and no $1$-barcodes, that is, $\mathcal B_0(f_\epsilon)=\{(\pi/4+\epsilon,3\pi/4-\epsilon),(5\pi/a+\epsilon,7\pi/4-\epsilon]\}$.
For $\epsilon=\pi/4$ we have the same Jordan cell, $\mathcal J_0(f_{\pi/4})=\{(\kappa^1,\operatorname{id})\}$, no $0$-barcodes, and one $1$-barcode, $\mathcal B_1(f_{\pi/4})=\{[\pi/2,\pi/2]\}$.
\end{example}

\begin{example}\label{Ex:B3}
Take $Y=Y_1\cup Y_2\subseteq[0,2\pi]\times\mathbb S^1$ where $Y_1=[0,2\pi]\times\{p\}$, $Y_2=[2\pi/3,4\pi/3]\times\mathbb S^1$, and $p\in\mathbb S^1$ is a base point. 
For $0\leq\epsilon<4\pi/3$ we define a map $\overline f_\epsilon\colon Y\to[0,2\pi]$ by
$$
\overline f_\epsilon(x,y):=
\begin{cases}
\frac{2\pi+3\epsilon}{2\pi}x&\text{if $0\leq x\leq2\pi/3$,}\\
\frac{2\pi-6\epsilon}{2\pi}x+3\epsilon&\text{if $2\pi/3\leq x\leq4\pi/3$, and}\\ 
\frac{2\pi+3\epsilon}{2\pi}x-3\epsilon&\text{if $4\pi/3\leq x\leq2\pi$.}
\end{cases}
$$
The space $Y$ and the map $\overline f_\epsilon$ are illustrated in Figure~\ref{Fig:B3}.
\begin{figure}
\includegraphics[width=\textwidth]{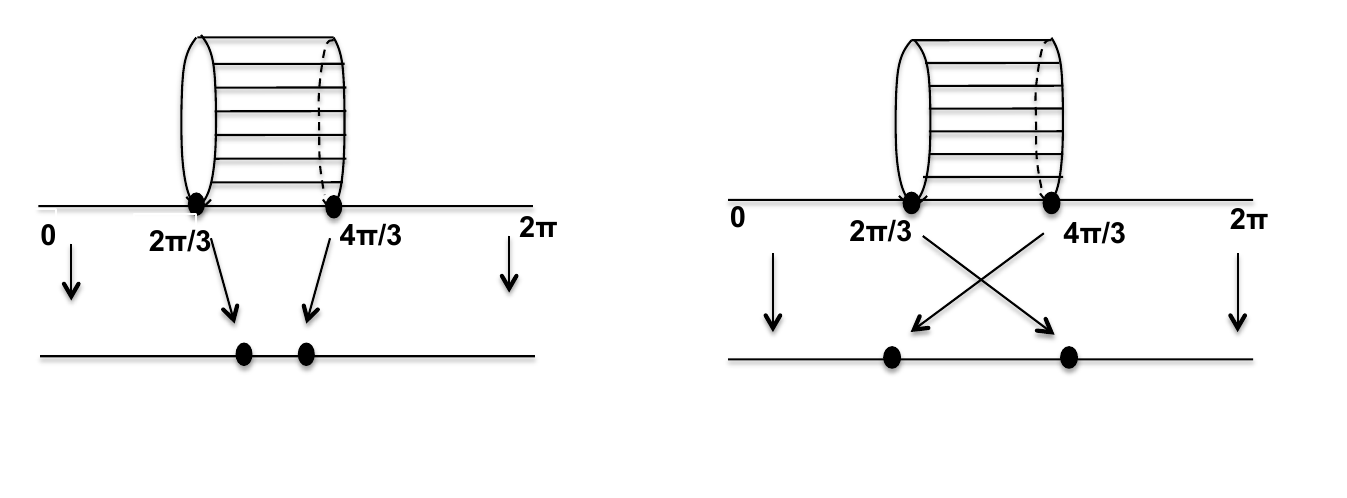}
\caption{The map $\overline f_\epsilon\colon Y\to[0,2\pi]$ in Example~\ref{Ex:B3} for $0\leq\epsilon<\pi/3$ (left) and $\pi/3<\epsilon<4\pi/3$ (right).}
\label{Fig:B3}
\end{figure}
For $0\leq\epsilon\leq\pi/3$ we have one Jordan cell, $\mathcal J_0(f_\epsilon)=\{(\kappa^1,\operatorname{id})\}$, no $0$-barcodes, and one $1$-barcode, $\mathcal B_1(f_\epsilon)=\{[2\pi/3+\epsilon, 4\pi/3-\epsilon]\}$.
For $\pi/3<\epsilon<4\pi/3$ we have the same Jordan cell, $\mathcal J_0(f_\epsilon)=\{(\kappa^1,\operatorname{id})\}$, two $0$-barcodes and one $1$-barcode, that is, $\mathcal B_0(f_\epsilon)=\{[4\pi/3-\epsilon,2\pi/3+\epsilon),(4\pi/3-\epsilon,2\pi/3+\epsilon]\}$, and $\mathcal B_1(f_\epsilon)=\{[4\pi/3-\epsilon,2\pi/3+\epsilon]\}$.
\end{example}

Denote by $f^{X_1}_\epsilon\colon X_1\to\mathbb S^1$ and $f^{X_2}_\epsilon\colon X_2\to\mathbb S^1$ the maps $f_\epsilon\colon X\to\mathbb S^1$ described in Examples~\ref{Ex:B2} and \ref{Ex:B3}, respectively.
Moreover, let $Q$ denote the Hilbert cube, that is, the product of countably many copies of the unit interval.
Note that $X_1\times Q$ and $X_2\times Q$ are homeomorphic compact ANRs.
Moreover, for $i=1,2$ we have $\mathcal B_r(f^{X_i}_\epsilon\colon X_i\to\mathbb S^1)=\mathcal B_r(f^{X_i}_\epsilon\circ p_i\colon X_i\times Q\to\mathbb S^1)$ where $p_i\colon X_i\times Q\to X_i$ denotes the canonical projection.
Hence, one can clearly provide four homotopic tame maps, $h_1, h_2, h_3, h_4\colon X_1\times Q\to\mathbb S^1$, with $\mathcal B_r(h_1)=\mathcal B_r(f^{X_1}_0)$, $\mathcal B_r(h_2)=\mathcal B_r(f^{X_1}_{\pi/4})$, $\mathcal B_r(h_3)=\mathcal B_r(f^{X_2}_{\pi/3})$, and $\mathcal B_r(h_4)=\mathcal B_r(f^{X_2}_{2\pi/3})$.

\section{Structure of finitely generated modules over principal ideal domains}\label{SS10}

In this appendix we recall basic facts about modules over principal ideal domains, and we provide more specific information about modules over the principal ideal domain of Laurant polynomials, $\kappa[t^{-1},t]$.


Recall that an \emph{integral domain} is a commutative ring with unit $1\neq0$ which has no zero divisors.
An integral domain $R$ is called \emph{principal ideal domain} (PID) if every ideal $I\subseteq R$ is generated by a single element $a\in R$, that is, $I=Ra$.
Familiar examples of principal ideal domains are $\mathbb Z$, the ring of integers; $\kappa[t]$, the ring of polynomials of one variable $t$ with coefficients in a field $\kappa$; 
and $\kappa[t^{-1},t]$, the ring of Laurent polynomials of one variable $t$ with coefficients in the field $\kappa$.


Let $M$ be a module over a principal ideal domain $R$.
Recall that $M$ is called \emph{free} if it admits a basis $\{x_i\}_{i\in I}$, i.e.\ if it is isomorphic to $\bigoplus_{i\in I}R$ for some index set $I$.
In this case, the cardinality of the basis is uniquely determined and referred to as the \emph{dimension} of the free $R$-module $M$, see~\cite[Chapter~III\S7]{La}.

A proof of the following basic fact can be found in \cite[Theorem~III.7.3]{La} or \cite[Theorem~I.5.1]{HS}.

\begin{theorem}
Suppose $M$ is a submodule of a free module $F$ over a principal ideal domain.
Then $M$ is free and its dimension is at most the dimension of $F$.
\end{theorem}

The preceeding result readily implies that submodules of finitely generated modules over a principal ideal domain are finitely generated, see \cite[Corollary~III\S7.2]{La}.

Let $M$ be a module over a principal ideal domain $R$.
The \emph{torsion submodule} of $M$ is defined to be the submodule of all torsion elements, $\operatorname{Tor}(M):=\{x\in M \mid\text{$\exists\lambda\in R\setminus0$ such that $\lambda x=0$}\}$.
If $\operatorname{Tor}(M)=0$, then $M$ is called \emph{torsion free}.
If $\operatorname{Tor}(M)=M$, then $M$ is called \emph{torsion module}.

We have the following fundamental structure theorem, see \cite[Theorem~III\S7.3]{La}.

\begin{theorem}
If $M$ is a finitely generated module over a principal ideal domain, then $M/\operatorname{Tor}(M)$ is free, and there exists an isomorphism $M\cong\operatorname{Tor}(M)\oplus(M/\operatorname{Tor}(M))$.
\end{theorem}

In other words, if $M$ is a finitely generated module over a principal ideal domain $R$, then there exists a decomposition $M\cong T\oplus F$ where $T$ is a finitely generated torsion module and $F$ is a finite dimensional free module.
Moreover, the summands $T$ and $F$ are uniquely determined, up to isomorphism.
More precisely, $T\cong\operatorname{Tor}(M)$ and $F\cong R^m$ where $m\in\mathbb N_0$ denotes the dimension of $M/\operatorname{Tor}(M)$.

%

Let us now consider the pricipal ideal domain $R=\kappa[t^{-1},t]$ where $\kappa$ is a field.
A module over this ring is exactly the same thing as a pair $(M,T)$ where $M$ a $\kappa$-vector space and $T\colon M\to M$ is a $\kappa$-linear isomorphism. 
The vector space $M$ is the underlying vector space of the module and the $\kappa$-linear isomorphism $T$ is defined by multiplication by $t$, its inverse being the multiplication by $t^{-1}$.
Note that $M$ is a finite dimensional $\kappa$-vector space if and only if the module is a finitely generated torsion module.
Hence, finitely generated torsion modules over $\kappa[t^{-1},t]$ can equivalent be regarded as pairs $(M,T)$ where $M$ is a finite dimensional $\kappa$-vector space and $T\colon M\to M$ is an isomorphism.

If $M$ is a finitely generated module over $\kappa[t^{-1},t]$, then its torsion submodule coincides with the kernel of the homomorphism obtained by tensorizing the natural inclusion $\kappa[t^{-1},t]\subseteq\kappa[t^{-1},t]]$ with $M$, that is,
$$
\operatorname{Tor}(M)=\ker\Bigl(M\to\kappa[t^{-1},t]]\otimes_{\kappa[t^{-1},t]}M\Bigr).
$$
Here $\kappa[t^{-1},t]]$ denotes the field of Laurant series in one variable.
 
 
Consider a $G_{2m}$-representation $\rho$ and a decomposition
$$
\rho\cong\bigoplus_{I\in \mathcal B(\rho)}\rho_I\oplus\bigoplus_{J\in \mathcal J(\rho)}\rho_J
$$ 
as in Section~\ref{S2}.
Then the infinite cyclic covering $\tilde\rho$ is a finitely generated module over $\kappa[t^{-1},t]$ in a natural way.
Its free part is isomorphic to the vector space $\kappa[\widetilde{\mathcal B}(\rho)]$ equipped with the isomorphism $T$ induced by the translation $\tau(I)=I+2\pi$.
Its torsion part is isomorphic to the pair $(V,T)$ where $V=\bigoplus_{J\in \mathcal J(\rho)}V_J$ and $T=\bigoplus_{J\in \mathcal J(\rho)} T_J$.
Clearly $V= \kappa[\widetilde{\mathcal J}(\rho)]$.
The underling vector space of  this module is $\kappa[ \widetilde {\mathcal B}(\rho)\sqcup \widetilde {\mathcal J}(\rho)]$.

  

\begin{thebibliography}{XX}

\bibitem{A56}
M. Atiyah,
\textit{On the Krull-Schmidt theorem with application to sheaves,}
Bull. Soc. Math. France \textbf{84}(1956), 307--317. 

\bibitem{BD68}
I. Bucur and A. Deleanu,
\textit{Introduction to the theory of categories and functors.}
With the collaboration of Peter J. Hilton and Nicolae Popescu. 
Pure and Applied Mathematics, Vol. XIX Interscience Publication John Wiley \& Sons, Ltd., London--New York--Sydney 1968. 

\bibitem{BD11}
D. Burghelea and T. K. Dey,
\textit{Topological persistence for circle-valued maps,}
Discrete and Comput. Geom. \textbf{50}(2013), 69--98.

\bibitem{BH08}
D. Burghelea and S. Haller,
\textit{Dynamics, Laplace transform and spectral geometry,}
J. Topol. \textbf{1}(2008), 115--151.

\bibitem{B14}
D. Burghelea,
\textit{A refinement of Betti numbers in the presence of continuous real valued function, (I).}
To appear in Algebraic and Geometric Topology (2017).
Preprint available at arXiv:1501.01012

\bibitem{B15}
D. Burghelea,
\textit{A refinement of Betti numbers and homology in the presence of a continuous function II (the case of an angle valued map).}
Preprint available at arXiv:1603.01861

\bibitem{B16}
D. Burghelea,
\textit{Linear relations, monodromy and Jordan cells of a circle valued map.}
Preprint available at arXiv:1501.02486

\bibitem{BUR} 
D. Burghelea,
\textit{New topological invariants for real valued and angle valued maps. An alternative to Morse--Novikov theory.}
World Scientific Publishing Co. Pte. Ltd., Hackensack, NJ, 2018. (to appear)

\bibitem{BM}
A. Borel and J. C. Moore,
\textit{Homology theory for locally compact spaces,}
Mich. Journal of Math. \textbf{7}(1959), 137--159.  

\bibitem{CSD09}
G. Carlsson, V. de Silva and D. Morozov,
\textit{Zigzag persistent homology and real-valued functions,}
Proc. of the 25th Annual Symposium on Computational Geometry 2009, 247--256.

\bibitem{CEH07}
D. Cohen-Steiner, H. Edelsbrunner, and J. L. Harer,
\textit{Stability of persistence diagrams,} 
Discrete Comput. Geom. \textbf{37}(2007), 103--120.

\bibitem{CEH09}
D. Cohen-Steiner, H. Edelsbrunner, and J. L. Harer,
\textit{Extending persistence using Poincar\'e and Lefschetz duality,}
Found. Comput. Math. \textbf{9}(2009), 79--103.

\bibitem{RD55}
R. Deheuvels, 
\textit{Topologie d'une fonctionelle,}
Ann. of Math. \textbf{61}(1955), 13--72.

\bibitem{HDJW}
H. Derksen and J. Weyman,
\textit{Quiver Representations,}
Notices Amer. Math. Soc. \textbf{52}(2005), 200--206.

\bibitem{DF73}
P. Donovan and M. R. Freislich,
\textit{The representation theory of finite graphs and associated algebras,}
Carleton Mathematical Lecture Notes \textbf{5}, Carleton University, Ottawa, 1973.

\bibitem{EH}
H. Edelsbrunner, J. L. Harer,
\textit{Computational topology: An Introduction.}
AMS, Providence, RI, 2009.

\bibitem{ELZ02}
H. Edelsbrunner, D. Letscher, and A. Zomorodian,
\textit{Topological persistence and simplification,}
Discrete Comput. Geom. \textbf{28}(2002), 511--533. 

\bibitem{F}
M. Farber, 
\textit{Topology of closed one-forms.}
Mathematical Surveys and Monographs \textbf{108}, American Mathematical Society, 2004.

\bibitem{FL99}
M. Ferri, C. Landi,
\textit{Representing size functions by complex polynomials,}
Proc. Math. Met. in Pattern Recognition \textbf{9}, Moskow, November 16-19, 1999. \footnote{It was brought to our attention that ``monic polynomials'' to  encode persistence diagram were used also in this paper and the additional work quoted in its references; however for a different type of bar codes}

\bibitem{G72}
P. Gabriel,  
\textit{Unzerlegbare Darstellungen I,}
Manuscripta Math. \textbf{6}(1972), 71--103.

\bibitem{H02}
A. Hatcher,
\textit{Algebraic topology.} 
Cambridge University Press, Cambridge, 2002.

\bibitem{HS}
P. J. Hilton and U. Stammbach,
\textit{A Course in Homological Algebra.}
Second edition.
Graduate Texts in Mathematics \textbf{4}.
Spinger Verlag, 1996.

\bibitem{Hu}
S. Hu,
\textit{Theory of Retracts.}
Wayne State University Press, Detroit, 1965.

\bibitem{HL99b}
M. Hutchings and Y.-J. Lee,
\textit{Circle-valued Morse theory, Reidemeister torsion, and Seiberg--Witten invariants of 3-manifolds,}
Topology \textbf{38}(1999), 861--888.

\bibitem{La}
S. Lang,
\textit{Algebra.} Revised third edition. 
Graduate Texts in Mathematics \textbf{211}.
Springer-Verlag, New York, 2002.


\bibitem{Mi3}
J. Milnor,
\textit{Morse theory.}
Princeton Univ. Press, Princeton N.J., 1963.

\bibitem{Mi4}
J. Milnor,
\textit{Lectures on h-Cobordism Theorem.} 
Notes by L. Siebenmann and J. Sondow.
Princeton Univ. Press, Princeton N.J., 1965.

\bibitem{MO}
M. Morse,
\textit{Rank and span in functional topology,}
Ann. of Math. \textbf{41}(1940), 419--454. 

\bibitem{N73}
L. A. Nazarova,
\textit{Representations of quivers of infinite type (Russian),}
Izv. Akad. Nauk SSSR Ser. Mat. \textbf{37}(1973), 752--791.

\bibitem{Novikov}
S. P. Novikov,   
\textit{Quasiperiodic structures in topology.}
In Topological methods in modern mathematics, Proc. Sympos. in honor of John Milnor's sixtieth birthday,
New York, 1991. eds L. R. Goldberg and A. V. Phillips, Publish or Perish, Houston, TX, 1993, 223--233.  

\bibitem{P}
A. V. Pajitnov,
\textit{Circle valued Morse Theory.}
De Gruyter Studies in Mathematics \textbf{32}, 2006.

\bibitem{P73}
N. Popescu,
\textit{Abelian categories with applications to rings and modules.}
London Mathematical Society Monographs, No. \textbf{3}. 
Academic Press, London--New York, 1973.

\bibitem{SdSW05}
A. Sandovici, H. de Snoo and H. Winkler,
\textit{The structure of linear relations in Euclidean spaces,}
Linear Algebra Appl. \textbf{397}(2005), 141--169. 



%
%
%
%
%
%
%
%
%
%
%
%
%
%
%
%
%
%
%
%
%
%
%
%
%
%
%
%
%
%
%
%
%

\end{thebibliography}
\end{document}